\newtheorem{theorem}{Theorem}
\newtheorem{definition}{Definition}
\newtheorem{lemma}{Lemma}
\newtheorem{proposition}{Proposition}
\newtheorem*{proposition*}{Proposition}
\newtheorem{assumption}{Assumption}
\title{Learning collective variables that preserve transition rates}
\author[1]{Shashank Sule \thanks{ssule25@umd.edu}}
\author[2]{Arnav Mehta \thanks{arnavmehta@berkeley.edu}}
\author[1]{Maria K. Cameron \thanks{mariakc@umd.edu}}
\affil[1]{\small{Department of Mathematics, University of Maryland, College Park, MD, USA}}
\affil[2]{\small{Department of Mathematics, University of California, Berkeley, CA, USA}}
\date{\today}
\begin{document}
\maketitle
\begin{abstract}
    Collective variables (CVs) play a crucial role in capturing rare events in high-dimensional systems, motivating the continual search for principled approaches to their design. In this work, we revisit the framework of quantitative coarse graining and identify the orthogonality condition from Legoll and Lelievre (2010) as a key criterion for constructing CVs that accurately preserve the statistical properties of the original process. We establish that satisfaction of the orthogonality condition enables error estimates for both relative entropy and pathwise distance to scale proportionally with the degree of scale separation. Building on this foundation, we introduce a general numerical method for designing neural network-based CVs that integrates tools from manifold learning with group-invariant featurization. To demonstrate the efficacy of our approach, we construct CVs for butane and achieve a CV that reproduces the anti-gauche transition rate with less than ten percent relative error. Additionally, we provide empirical evidence challenging the necessity of uniform positive definiteness in diffusion tensors for transition rate reproduction and highlight the critical role of light atoms in CV design for molecular dynamics. 
\end{abstract}
%\tableofcontents
\section{Introduction}
\label{sec: introduction}
Collective variable (CV) discovery is a fundamental problem in computational statistical physics involving the design of coarse-grained low-dimensional representations of high-dimensional stochastic processes. In practice, CV discovery is particularly important for the design of novel materials \cite{neha2022collective, zhang2019improving, karmakar2021collective} or drug discovery \cite{cuchillo2015collective, alonso2006combining} and for the understanding of rare events in molecular dynamics (MD) simulations of biomolecules and chemical reactions \cite{yang2019enhanced, paul2019phase, valsson2016enhancing}. A standard model for such MD simulations is the \emph{overdamped Langevin dynamics}, which describes the motion of $N$ coordinates in a potential energy landscape: 
\begin{align}
    dX_t = -\nabla V(X_t)\,dt + \sqrt{2\beta^{-1}}\,dW_t. \label{eq: OLD}
\end{align}
Here $X_t \in \mathbb{R}^{N}$ represents the system's state, $V: \mathbb{R}^{N} \to \mathbb{R}$ is the potential energy function, $\beta^{-1} = k_B T$ where $T$ is the temperature and $k_B$ the Boltzmann constant, and \( W_t \) is a standard Wiener process. The wells of the potential $V$ encode metastable states—regions where the system remains trapped for long periods before transitioning due to thermal fluctuations. Simulating and analyzing these transitions is computationally expensive due to the wide range of timescales involved. Therefore, instead of working directly with the high-dimensional process $X_t$, we seek a collective variable $\xi: \mathbb{R}^{N} \to \mathbb{R}^d$ that maps $X_t$ to a low-dimensional process  $Y_t = \xi(X_t)$. Typically, CVs are designed with the following objectives:  
\begin{enumerate}
    \item Mapping by $\xi$ should separate metastable states, enabling a clear description of transitions,  
    \item The CV should enable enhanced sampling to accelerate rare-event simulations, and  
    \item The mapped process $Y_t = \xi(X_t)$ should reproduce the statistical properties of the original dynamics.    
\end{enumerate}
Thus, while traditional feature learning techniques in artificial intelligence optimize for expressivity or discriminative power, CVs must respect the dynamical and geometric properties of $X_t$, making their construction significantly more challenging. There is nonetheless a rich line of work for designing metastable state-separating CVs and CVs for enhanced sampling using data-driven methods, thus satisfying objectives (1) and (2). However, a key hindrance to achieving the objective (3) is that the projected process $Y_t$ is generally not closed because its evolution depends not only on its current state but also on latent high-dimensional variables. This lack of closure makes direct simulation and statistical reproduction of $Y_t$ infeasible. This is seen by applying Ito's lemma to $\xi(X_t)$: 
\begin{align}
    dY_t = d\xi(X_t) = \mathcal{L}\xi(X_t)\,dt + \sqrt{2\beta^{-1}}\textsf{det}(D\xi(X_t)D\xi(X_t)^{\top})^{1/2} \:dB_t. \label{eq: projected dynamics}
\end{align}
Here $B_t$ is a rescaled Brownian motion in $\mathbb{R}^d$ and $\mathcal{L}$ is the infinitesimal generator of the overdamped Langevin dynamics \eqref{eq: OLD}:
\begin{align}
    B_t &= \frac{D\xi}{\textsf{det}\left(D\xi D\xi^{\top}\right)^{1/2}}W_t, \label{eq: conditioning formulae} \\
    \mathcal{L}f &:= \beta^{-1}\Delta f - \nabla V \cdot \nabla f.     \label{eq:generator}
\end{align}

One approach to constructing a closed effective process is the \emph{Zwanzig closure}, which ensures that the time-marginals of the reduced process match those of the full dynamics \parencite{givon2004extracting}. However, this results in a non-Markovian equation, which is difficult to simulate exactly. Legoll and Lelièvre \parencite{legoll2010effective} studied an alternative \emph{effective dynamics} $Z_t$, where instead of conditioning on the full history of $X_t$, the evolution of \( Y_t \) is conditioned on the invariant measure $\mu = Z^{-1} \exp{(-\beta V})$ associated with $X_t$: 

\begin{align}
    dZ_t = b(Z_t)\,dt + \sqrt{2\beta^{-1}}M^{1/2}(Z_t)\,dB_t. \label{eq: effective dynamics}
\end{align}
The drift $b$, the matrix $M$ and the process $B_t$ are described as follows:
\begin{align}
\label{eq: effective_b}
    b(z) &= \mathbb{E}_{\mu}[\mathcal{L}\xi(X) \mid \xi(X) = z], \\
\label{eq: effective_A}
    M(z) &= \mathbb{E}_{\mu}[D\xi(X)D\xi(X)^{\top} \mid \xi(X) = z].
\end{align}
Thus, the effective dynamics $Z_t \in \mathbb{R}^d$ are constructed by conditioning the drift and diffusion of the projected dynamics \eqref{eq: projected dynamics} $Y_t$ by fixing $\xi$ under the measure $\mu$. $Z_t$ is intrinsically low-dimensional and can be simulated using established numerical methods \cite{weinan2004metastability, maragliano2006string}. The downside of the effective dynamics is that the laws of $Z_t$ and $Y_t$ do not coincide, so $Z_t$ may obfuscate the statistics of $Y_t$. Finding error estimates for guaranteeing the proximity of $Z_t$ and $Y_t$ is therefore a highly active topic of research with a number of works \cite{legoll2010effective, legoll2019effective, lelievre2019pathwise, duong2018quantification, Sharma2017} exploring the cases arising from different combinations of assumptions on the underlying dynamics $X_t$, the CV $\xi$, and ways to measure distances between stochastic processes. In this paper we are concerned with the \emph{scale separating} case of the underlying dynamics where the potential $V:= V_\epsilon$ is given by a sum of a \emph{confining potential} $V_1\ge 0$ and a \emph{driving potential} $V_0$.
The confining potential is
defined so that the level set 
\begin{align}
 \mathcal{M} := \{V_1 = 0\}  \label{eq: residence manifold}
\end{align}
has co-dimension 1 in $\mathbb{R}^{N}$. The effect of the confining potential is controlled by the small parameter $\epsilon \ll 1$: 
\begin{align}
    V = V_0 + \epsilon^{-1}V_1. \label{eq: split potential} 
\end{align}
% The confining potential %is defined by squaring the \emph{confining term} $V_1$ in order to maintain non-negativity of $V_\epsilon$. It has been 
% defined so} that the level set 
% \begin{align}
%  \mathcal{M} := \{V_1 = 0\}  \label{eq: residence manifold}
% \end{align}
%has co-dimension 1 in $\mathbb{R}^{N}$. 
Potentials of the form \eqref{eq: split potential} arise routinely in MD simulations where the inter-atomic energy is given by a sum of strong covalent bonds (corresponding to the stiff $\epsilon^{-1}V_{1}$ term) and weak torsion energies (corresponding to the term $V_0$). In this scale-separating case, the small parameter $\epsilon$ confines $X_t$ to reside near $\mathcal{M}$, and is henceforth termed the \emph{residence manifold}. On $\mathcal{M}$, the underlying slow process is directed by the driving potential $V_0$. More notably, it is shown in \cite{legoll2010effective} that the distance between $Z_t$ and $Y_t$ as measured by relative entropy can be made to scale as $C + O(\epsilon)$ if the following equation, the \emph{orthogonality condition}, is satisfied:   
\begin{align}
    D\xi\nabla V_1 = 0. \label{eq: orthogonality condition} \tag{OC}
\end{align}
In a toy two-dimensional example \parencite{legoll2010effective} demonstrated that designing CVs which satisfied \eqref{eq: orthogonality condition} was instrumental in reproducing residence times via the effective dynamics $Z_t$. However, most realistic MD simulations are high-dimensional and consist of not one, but several confining potentials at different scales, resulting in complicated residence manifolds of intrinsic dimension significantly smaller than $N$. 

% In this paper, we propose a numerical method which extends \parencite{legoll2010effective}'s example to obtain CVs for high-dimensional MD data without requiring knowledge of $V_1$ beforehand.  

\subsection{Aims and contributions} 
In this paper, we aim to apply the theory of quantitative coarse graining as outlined in \cite{legoll2010effective, duong2018quantification, lelievre2019pathwise} towards a numerical method for obtaining CVs. 

% We also propose a numerical method which extends \parencite{legoll2010effective}'s example to obtain CVs for high-dimensional MD data without requiring knowledge of $V_1$ beforehand. Our contributions are as follows: 

\begin{enumerate}
    \item We review the theory of quantitative coarse graining from \cite{legoll2010effective, duong2018quantification, lelievre2019pathwise} and demonstrate how two geometric conditions, the orthogonality condition \eqref{eq: orthogonality condition} and the projected orthogonality condition \eqref{eq: orthogonality condition 2}, emerge as sufficient conditions for reducing the discrepancy between the projected dynamics $Y_t$ \eqref{eq: projected dynamics} and the effective dynamics $Z_t$ \eqref{eq: effective dynamics} as quantified by relative entropy \eqref{eq: relative entropy estimate} and pathwise distance respectively \eqref{eq: pathwise distance}. We evaluate such analytical conditions on CVs in the context of reproduction of reaction rates in molecular dynamics simulations. In particular, we prove that \eqref{eq: orthogonality condition} and \eqref{eq: orthogonality condition 2} are equivalent (Proposition \eqref{prop: oc poc equivalence}). Moreover, we provide evidence that the reaction rate can be reproduced even if the diffusion tensor $M(z)$ is rank-deficient. We illustrate this fact for the anti-gauche transition rate in the \emph{normal}-butane molecule $C_{4}H_{10}$ (henceforth simply referred to as \emph{butane}) (Table \eqref{tab: transition rates}). 
    \item We propose a general procedure for designing CVs based on \eqref{eq: orthogonality condition} (Algorithm \eqref{alg: learning cv}). We show how to integrate several tools from manifold learning such as diffusion maps \cite{coifman2008diffusion}, diffusion nets \cite{mishne2019diffusion}, group invariance, and independent eigencoordinate selection \cite{chen2019nonlinear} to learn a neural network-based representation of $\mathcal{M}$ as \emph{hypersurface} (Algorithm \eqref{alg: ResManLearn}). We remark that this hypersurface $\widehat{\mathcal{M}}$, henceforth termed the \emph{surrogate manifold} may contain self-intersections despite the use of functionally independent coordinates. As a solution, we propose a novel manifold learning technique, Laplacian Conformal Autoencoder (LAPCAE) which can \emph{undo} these self-intersections (Figure \eqref{fig: butane slice}).  
        \item We underline the instrumental role played by group invariance in the post-processing of MD data. In particular, we propose mapping, or \emph{featurizing} data sampled from overdamped Langevin dynamics $X_t$ \eqref{eq: OLD} through a feature map $\mathcal{F}$ with invariances to rotations, translations, or reflections. We show (Figure \ref{fig: butane man learning}) that different feature maps can provide markedly different embeddings of the same data. Moreover, we demonstrate for butane that the positions of the hydrogen atoms are \emph{necessary} for learning accurate representations of the dynamics. This point is of particular interest to the development of MD workflows, where currently it is standard practice to post-process simulation data by discarding coordinates of light atoms. 
        \item We conduct a detailed investigation of our algorithm on butane as a case study. Although butane is a toy model for studying conformational dynamics, it is nonetheless the simplest MD example that exhibits the characteristics of MD data, such as rotation and translation invariance, low intrinsic dimensionality, metastability, and slow and fast time scales. Moreover, the existence of a clear CV, the dihedral angle, makes it easier to evaluate and benchmark our proposed three-step algorithm. As a lookahead to our results, the CV learned by our algorithm for the feature map {\sf PlaneAlign} reproduces the anti-gauche transition rate in butane %\emph{up to four digits of accuracy} 
       with a relative error of nearly $10\%$ while the dihedral angle yields a relative error of 24\%
        (Table \eqref{tab: transition rates}). The code to reproduce our experiments and train our models has been provided at \url{https://github.com/ShashankSule/CV_learning_butane}. 
    \end{enumerate}

\subsection{Related work} CV discovery is a thriving interdisciplinary subject at the intersection of probability theory, numerical analysis, biophysics, and, more recently, machine learning. On the theoretical front, since \citeauthor{legoll2010effective}'s foundational paper proposing the effective dynamics \eqref{eq: effective dynamics}, there have been several subsequent theoretical works that have extended the conditioning approach to multidimensional CVs, non-reversible processes, and non-isotropic diffusions. Moreover, these works have addressed distances between $Z_t$ and $Y_t$ as measured by relative entropy, Wasserstein distance, and the pathwise distance \parencite{duong2018quantification, Lelièvre2019, hartmann2020coarse, hartmann2020coarse, legoll2019effective}. On the computational side, there is a vast literature on designing CVs for MD simulations, starting with human-designed CVs \cite{heilmann2020sampling, ooka2023accurate, chodera2007automatic, hinsen2020hydrogen, altis2008dihedral, schwantes2013improvements}, data-driven CVs using classical unsupervised learning methods such as diffusion maps \cite{rohrdanz2011determination, zheng2013rapid, preto2014fast}, PCA (and variants) \cite{romero2007essential, grossfield2007application}, LDA \cite{sasmal2023reaction}, IsoMap \cite{spiwok2011metadynamics, zheng2013molecular}, and tICA \cite{m2017tica, schwantes2015modeling, schultze2021time} or its generalization, the variational approach for conformational (VAC) dynamics \cite{perez2013identification, noe2013variational, nuske2014variational}. With the burgeoning ability to train highly nonlinear deep models, deep learning methods for generalizing the supervised learning methods mentioned above such as VAMPnets \cite{chen2019selecting, mardt2018vampnets, bonati2021deep} and informational bottlenecked techniques such as time-lagged autoencoders \cite{wehmeyer2018time, zhang2023understanding, chen2018molecular} or variational autoencoders \cite{wang2021state, wang2024information, ribeiro2018reweighted, hernandez2018variational} have emerged. The present paper sits at the intersection of theory and application in the sense that it realizes the quantitative coarse-graining theory into an algorithm applied to an MD system. A closely related work towards the same goal is the recent paper \cite{zhang2024finding}.

\subsection{Organization} The rest of this paper is organized as follows. In Section \eqref{sec: background}, we review the theory of quantitative coarse-graining from \cite{legoll2010effective, duong2018quantification, lelievre2019pathwise} where we state relative entropy and pathwise distance estimates for the discrepancy between the projected dynamics and the effective dynamics. In Section \eqref{sec: effective dynamics analysis}, we study two conditions emerging from this theory, the orthogonality condition \eqref{eq: orthogonality condition} and the uniform positive definiteness of $D\xi$ \eqref{ass: xi}. We also propose a numerical method for designing neural network-based CVs. The following two sections are more experimental and concern the application of our algorithm to butane. In particular, we describe how to learn the surrogate manifold $\mathcal{\widehat{M}}$ in Section \ref{sec: resmanlearn} and we learn the CV and present results for transition rate estimation in Section \ref{sec: learning cvs}. 

\section{A brief overview of quantitative coarse-graining}
\label{sec: background}

\subsection{Collective Variables} 
In this section, we review results from \cite{legoll2010effective, duong2018quantification, Lelièvre2019} to describe how \eqref{eq: orthogonality condition} emerges as a means to optimize coarse-graining error between $Z_t$ \eqref{eq: effective dynamics} and $Y_t$ \eqref{eq: projected dynamics}. To begin, we state some general regularity conditions which ensure the existence of the invariant measure $\mu$ and unique solutions to the all-atom dynamics \eqref{eq: OLD} and the effective dynamics \eqref{eq: effective dynamics}:
\begin{assumption}
\label{ass: existence and uniqueness}
    We assume the potential $V$ satisfies: 
    \begin{align}
        &V \in C^{\infty}(\mathbb{R}^{N}) \text{ almost everywhere}, \quad \exp{(-\beta V)} \in L^{1}(\mathbb{R}^{N}). \label{eq: integrability of V}
    \end{align}
    Moreover, we assume that the effective drift \eqref{eq: effective_b} and the effective diffusion \eqref{eq: effective_A} are Lipschitz: 
    \begin{align}
         \exists \: L_M, L_b > 0 \text{ such that }|b(z) - b(z')| \leq L_b|z - z'|, \quad \|M(z) - M(z')\|_F \leq L_M|z - z'|. \label{eq: Lipschitz constants}
    \end{align}
\end{assumption}
Conditions \eqref{eq: integrability of V} and \eqref{eq: Lipschitz constants} ensure that both $X_t$ and $Z_t$ exist and are unique. We note here that \cite{duong2018quantification} gives a more technical assumption for the CV $\xi$ (see \cite[Eq. (C3)]{duong2018quantification}). However, insofar as we are concerned with relative entropy estimates, this assumption is only used for proving that the effective dynamics have Lipschitz coefficients. Therefore, by assuming the above Lipschitzness we can remove this more technical assumption appearing in \cite{duong2018quantification}. A more consequential condition for studying the effective dynamics is the boundedness of the singular values of $D\xi D\xi^{\top}$:
\begin{assumption}
\label{ass: xi}
We assume that $\xi \in C^{3}(\mathbb{R}^{N})$. Moreover, there exists $\delta_{\xi} > 0$ such that the collective variable $\xi$ satisfies: 
    \begin{align}
        D\xi D\xi^{\top} \succcurlyeq \delta^{2}_{\xi} I_{d}
    \end{align}
    Here $I_d$ is a $d \times d$ identity matrix.
\end{assumption}

Given Assumption \ref{ass: xi}, the conditioning approach may be further clarified through the use of an integral formulation. First, we introduce the level set $\Sigma_{z} := \{\xi = z\}$. Since assumption \eqref{ass: xi} ensures that $z$ is a regular value of $\xi$, by the implicit function theorem, $\Sigma_z$ is a smooth manifold of codimension $d$ whose cotangent space is spanned by the rows of the Jacobian $D\xi$. In particular, the level sets $\left(\Sigma_z\right)_{z \in \mathbb{R}^{d}}$ \emph{foliate} $\mathbb{R}^{N}$ via:
\begin{align}
    \mathbb{R}^{N} = \coprod_{z \in \mathbb{R}^d} \Sigma_{z}. 
\end{align}
Consequently, the \emph{co-area formula}~\cite{Evans1992} can be used to break up the integral on $\mathbb{R}^{N}$ into integrals on $\Sigma_z$. For instance, if $\varphi: \mathbb{R}^{N} \to \mathbb{R}$ is bounded and continuous then 
\begin{align}
    \int_{\mathbb{R}^{N}}\varphi(x)\mu(x)\,dx = \int_{\mathbb{R}^{d}}\int_{\Sigma_{z}}\frac{\varphi(x)\mu(x)}{\textsf{det}(D\xi D\xi^{\top})}d\sigma_{\Sigma_z}(x)\,dz. \label{eq: coarea formula}
\end{align}
Note that if $X \sim \mu$ is a random variable, then so is $Z = \xi(X)$. Then \eqref{eq: coarea formula} is a geometric description of the tower law
\begin{align}
    \mathbb{E}_{\mu}[\varphi(X)] = \mathbb{E}\left[\mathbb{E}\left[\varphi(X) \mid Z\right]\right]. \label{eq: tower law}
\end{align}
The manifold $\Sigma_z$ can then be used to describe the probability densities of random variables after conditioning on the values of $\xi$. In particular, if $X$ is distributed by the invariant measure $\mu=Z^{-1}\exp(-\beta V(x))$ then $\xi(X) \in \mathbb{R}^d$ is distributed according to the measure $\mu^{\xi}$ given by
\begin{align}
    \mu^{\xi}(z) dz = Z^{-1}\int_{\Sigma_{z}}\frac{\exp \left(-\beta V(x)\right)}{\sqrt{\textsf{det}(D\xi D\xi^{\top}(x))}}\,d\sigma_{\Sigma_z}(x)\,dz.
    \label{eq: conditional measure}
\end{align}
Therefore, we can define a new measure on the surface $\Sigma_z$ given by $\mu_z$: 
\begin{align}
    \mu_z(x)\,dx &:= N_{z}^{-1}\frac{Z^{-1}\exp \left(-\beta V(x)\right)}{\sqrt{\textsf{det}(D\xi D\xi^{\top}(x))}}\,d\sigma_{\Sigma_z}(x),\\ 
    N_z &= Z^{-1}\int_{\Sigma_{z}}\frac{\exp \left(-\beta V(x)\right)}{\sqrt{\textsf{det}(D\xi D\xi^{\top}(x))}}\,d\sigma_{\Sigma_z}(x). 
\end{align}
The normalization constant $N_z$ is used to define the \emph{free energy}: 
\begin{align}
    f(z) = -\beta^{-1}\log N_z.  \label{eq: free energy}
\end{align}
By expressing the free energy as an integral we can also write the derivative of the free energy as an integral. This is formulated in \parencite[Lemma 2.2]{legoll2010effective} and \parencite[Lemma 2.4]{duong2018quantification}: 

\begin{lemma}
\label{lem: mean force lemma}
    The derivative of the free energy $\nabla_z f(z)$, termed the \emph{mean force} is given by
    \begin{align}
        \nabla_z f(z) &= \int_{\Sigma_z} F_z(x)\,dx, \\
        F_z(x) &= (D\xi D\xi)^{-1}D\xi \nabla V - \beta^{-1}\nabla \cdot ((D\xi D\xi^{\top})^{-1} D\xi). \label{eq: local mean force} 
    \end{align}
    Here $\nabla \cdot$ is the divergence operator and is applied row-wise when applied to a matrix. 
\end{lemma}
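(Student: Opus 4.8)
The plan is to express $\nabla_z f$ by combining the co-area formula \eqref{eq: coarea formula} with a single integration by parts on $\mathbb{R}^N$, using a right inverse of the Jacobian $D\xi$ as the key algebraic device (this is essentially the route of \cite{legoll2010effective,duong2018quantification}). Since $f(z)=-\beta^{-1}\log N_z$ by \eqref{eq: free energy}, we have $\nabla_z f(z)=-\beta^{-1}N_z^{-1}\nabla_z N_z$, so it suffices to obtain an integral representation of $\nabla_z N_z$. Differentiating the surface integral defining $N_z$ directly is delicate because $\Sigma_z$ itself moves with $z$ and the induced measure $d\sigma_{\Sigma_z}$ has no manifest $z$-derivative; instead I would argue weakly. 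For a test function $g\in C_c^\infty(\mathbb{R}^d)$, the co-area formula \eqref{eq: coarea formula} applied to $g\circ\xi$, together with the definitions \eqref{eq: conditional measure} of $N_z$ and $\mu_z$, gives
\begin{equation*}
\int_{\mathbb{R}^d} g(z)\,N_z\,dz=\mathbb{E}_\mu\!\left[g(\xi(X))\right]=\int_{\mathbb{R}^N} g(\xi(x))\,\mu(x)\,dx .
\end{equation*}

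Next I would differentiate this identity in $z$. Integrating by parts in $z$ (licit since $g$ is compactly supported and, by smoothness of $V,\xi$ and Assumption~\ref{ass: xi}, $N_z$ is $C^1$) and applying the co-area formula once more to $(\partial_{z_i}g)\circ\xi$,
\begin{equation*}
\int_{\mathbb{R}^d} g(z)\,\partial_{z_i}N_z\,dz=-\int_{\mathbb{R}^d}(\partial_{z_i}g)(z)\,N_z\,dz=-\int_{\mathbb{R}^N}(\partial_{z_i}g)(\xi(x))\,\mu(x)\,dx .
\end{equation*}
Under Assumption~\ref{ass: xi} the matrix field $\Phi:=D\xi^\top(D\xi D\xi^\top)^{-1}$ is well defined and $C^2$ on $\mathbb{R}^N$, with $D\xi\,\Phi=I_d$; hence its $i$-th column $\Phi_i$ satisfies $\nabla_x\!\big[g(\xi(x))\big]\cdot\Phi_i(x)=(\partial_{z_i}g)(\xi(x))$. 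Substituting this and integrating by parts in $x$ moves the derivative onto $\Phi_i\,\mu$, and expanding $\nabla_x\!\cdot(\Phi_i\mu)=\big(\nabla\!\cdot\Phi_i-\beta\,\Phi_i\!\cdot\!\nabla V\big)\mu$ gives
\begin{equation*}
-\int_{\mathbb{R}^N}(\partial_{z_i}g)(\xi(x))\,\mu(x)\,dx=\int_{\mathbb{R}^N} g(\xi(x))\big(\nabla\!\cdot\Phi_i-\beta\,\Phi_i\!\cdot\!\nabla V\big)(x)\,\mu(x)\,dx .
\end{equation*}
Applying the co-area formula a third time to the right-hand side, comparing with the first display, and using that $g$ is arbitrary yields the pointwise identity $N_z^{-1}\partial_{z_i}N_z=\int_{\Sigma_z}\big(\nabla\!\cdot\Phi_i-\beta\,\Phi_i\!\cdot\!\nabla V\big)\,\mu_z(x)\,dx$.

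Finally, recalling $\nabla_z f=-\beta^{-1}N_z^{-1}\nabla_z N_z$ and noting that $\Phi_i\!\cdot\!\nabla V=\big[(D\xi D\xi^\top)^{-1}D\xi\nabla V\big]_i$ while $\nabla\!\cdot\Phi_i=\big[\nabla\!\cdot\big((D\xi D\xi^\top)^{-1}D\xi\big)\big]_i$ (the row-wise divergence, since $\Phi_i$ is the transpose of the $i$-th row of $(D\xi D\xi^\top)^{-1}D\xi$), collecting the $d$ components gives $\nabla_z f(z)=\int_{\Sigma_z}F_z(x)\,\mu_z(x)\,dx$ with $F_z$ as stated. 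The only genuinely technical issues are justifying that $N_z$ is $C^1$ and that the weak manipulations commute — controlled by smoothness of $V,\xi$ and the uniform lower bound of Assumption~\ref{ass: xi}, which keeps $\Phi$ and its first derivatives locally bounded — and, more seriously, that the integration by parts over the non-compact space $\mathbb{R}^N$ produces no boundary term at infinity and that the three co-area / Fubini exchanges are legitimate; here one uses the decay of $\mu=Z^{-1}e^{-\beta V}$ from \eqref{eq: integrability of V} (and, if needed, an exhaustion/cutoff argument in $x$). I expect this last point — controlling behavior at infinity along the non-compact level sets $\Sigma_z$ — to be the main obstacle; the rest is bookkeeping organized around the identity $D\xi\,\Phi=I_d$.
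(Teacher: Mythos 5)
Your proof is correct and follows the same weak co-area/integration-by-parts route used in the references the paper cites for this lemma (Legoll--Leli\`evre, Lemma 2.2; Duong et al., Lemma 2.4); the paper itself only states the result with those citations and gives no argument of its own. Note that your final formula $\nabla_z f(z)=\int_{\Sigma_z}F_z(x)\,d\mu_z(x)$ with $F_z=(D\xi D\xi^\top)^{-1}D\xi\nabla V-\beta^{-1}\nabla\cdot\bigl((D\xi D\xi^\top)^{-1}D\xi\bigr)$ is the correct one, and the paper's displayed statement carries two typos that your derivation silently repairs: the integral should be against the conditional measure $\mu_z$ rather than surface measure $dx$ on $\Sigma_z$, and $(D\xi D\xi)^{-1}$ should read $(D\xi D\xi^\top)^{-1}$.
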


Lemma \ref{lem: mean force lemma} reveals that the mean force can itself be represented as an integral where the integrand $F$ is the \emph{local mean force}. As we shall see, the local mean force will play a crucial role in designing a strategy for choosing a collective variable. 

% But first, we state an equivalence between the dynamics \eqref{eq: effective dynamics} and a reversible dynamics proposed by \parencite{maragliano2006string}. 

% \begin{proposition}
%     Consider the following \emph{free-energy} dynamics proposed by \parencite{maragliano2006string}: 
%     \begin{align}
%         \widehat{Z}_t = -(A(\widehat{Z}_t)\nabla_{z}f(\widehat{Z}_t) + \beta^{-1}\nabla \cdot A(\widehat{Z}_t))\,dt + \sqrt{2\beta^{-1}}A(\widehat{Z}_t)^{1/2}\,dW_t. \label{eq: free energy dynamics}
%     \end{align}
%     Then under the assumption \eqref{ass: xi}, the effective dynamics \eqref{eq: effective dynamics} $Z_t$ are almost surely identical to the free energy dynamics $\widehat{Z}_t$. 
% \end{proposition}

\subsection{Measuring the quality of effective dynamics}

Many questions around the quality of the CV $\xi$ boil down to measuring how close the effective dynamics process $Z_t$ is to the coarse-grained process $Y_t$. As such, there are many ways to quantify the proximity between two stochastic processes, but two particularly fruitful criteria, \emph{relative entropy} and \emph{pathwise convergence}, were given in \parencite{legoll2010effective, duong2018quantification, lelievre2019pathwise}. We first discuss the relative entropy criterion.

\subsubsection{Relative entropy}

\begin{definition}[Relative entropy]
    Let $\nu, \eta$ be two probability measures on $\mathbb{R}^d$ which are absolutely continuous with respect to the Lebesgue measure. Then the relative entropy $H(\nu,\eta)$ is given as follows: 
    \begin{equation}
        H(\nu,\eta) := \begin{cases}
            \int_{\mathbb{R}^d} \, \ln\left(\frac{d\nu}{d\eta}\right)\,d\nu, \quad &\text{if} \quad \nu \ll \eta, \\
            +\infty \quad &\text{otherwise}.
        \end{cases}
    \end{equation}
\end{definition}

The relative entropy is related to measuring distances between distributions due to the Csiszar-Kullback inequality:

\begin{proposition}[Cziszar-Kullback inequality]
    Let $\|\nu - \eta\|_{TV}$ be the \emph{total variation} distance between the probability measures $\nu$ and $\eta$ on the $\sigma$-algebra $\mathcal{G}$. Then 
    \begin{align}
        \|\nu - \eta\|_{TV} := \underset{A \in \mathcal{G}}{\sup}|\mu(A) - \nu(A)| \leq \sqrt{2 H(\nu,\eta)} \label{eq: cziszar-kullback}.
    \end{align}
\end{proposition}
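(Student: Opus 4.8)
The plan is to reduce the statement to a pointwise estimate on the Radon--Nikodym derivative and close with Cauchy--Schwarz. If $\nu \not\ll \eta$ then $H(\nu,\eta) = +\infty$ and there is nothing to prove, so assume $\nu \ll \eta$ and set $p := d\nu/d\eta \geq 0$, so that $\int_{\mathbb{R}^d} p\,d\eta = 1$. First I would rewrite the total variation distance in the familiar $L^1$ form: taking $A^\star := \{p \geq 1\}$, for any measurable $A$ one has $\nu(A) - \eta(A) = \int_A (p-1)\,d\eta \leq \int_{A^\star}(p-1)\,d\eta$ and, symmetrically, $\eta(A) - \nu(A) \leq \int_{\{p<1\}}(1-p)\,d\eta$; since $\int(p-1)\,d\eta = 0$ these two quantities are equal, hence
\[
    \|\nu - \eta\|_{TV} \;=\; \tfrac12 \int_{\mathbb{R}^d} |p-1|\,d\eta .
\]

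The heart of the matter is the elementary one-variable inequality
\[
    t\ln t - t + 1 \;\geq\; \frac{3(t-1)^2}{2(t+2)}, \qquad t \geq 0,
\]
which I would prove by setting $g(t) := (t+2)(t\ln t - t + 1) - \tfrac32(t-1)^2$, verifying $g(1) = g'(1) = g''(1) = 0$, and noting that $g'''(t) = 2(t-1)/t^2$ changes sign only at $t = 1$, so $g''$ attains its minimum value $0$ there; integrating back twice yields $g \geq 0$. Granting this, apply Cauchy--Schwarz with weight $p+2$:
\[
    \int |p-1|\,d\eta \;=\; \int \frac{|p-1|}{\sqrt{p+2}}\,\sqrt{p+2}\,d\eta \;\leq\; \left(\int \frac{(p-1)^2}{p+2}\,d\eta\right)^{1/2}\left(\int (p+2)\,d\eta\right)^{1/2}.
\]
The second factor equals $\sqrt{3}$ because $\int p\,d\eta = 1$, and the pointwise bound controls the first factor by $\bigl(\tfrac23\int (p\ln p - p + 1)\,d\eta\bigr)^{1/2} = \bigl(\tfrac23 H(\nu,\eta)\bigr)^{1/2}$, again using $\int(1-p)\,d\eta = 0$. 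Hence $\int|p-1|\,d\eta \leq \sqrt{2H(\nu,\eta)}$, and therefore $\|\nu - \eta\|_{TV} \leq \tfrac12\sqrt{2H(\nu,\eta)} \leq \sqrt{2H(\nu,\eta)}$; the stated inequality in fact holds with room to spare.

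The only genuinely delicate step is the scalar inequality above; everything else is bookkeeping, though one should record the conventions at $t=0$ (where $t\ln t \to 0$) and the integrability justifying the splitting of $\int(p-1)\,d\eta$ into positive and negative parts. An alternative would be to invoke the data-processing inequality for relative entropy to reduce to the two-point case and then prove $a\ln\tfrac{a}{b} + (1-a)\ln\tfrac{1-a}{1-b} \geq 2(a-b)^2$ for $a,b\in[0,1]$; this is conceptually cleaner but needs the data-processing inequality established first, so for a self-contained argument the direct route above is preferable.
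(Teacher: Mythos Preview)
The paper does not supply a proof of this proposition; the Csisz\'ar--Kullback (Pinsker) inequality is simply quoted as classical background. Your argument is the standard one and is correct: the scalar inequality $t\ln t - t + 1 \ge \tfrac{3(t-1)^2}{2(t+2)}$ combined with Cauchy--Schwarz is exactly how this is usually done, and your verification via $g'''(t) = 2(t-1)/t^2$ is clean. As you observe, you actually establish the sharp form $\|\nu-\eta\|_{TV} \le \tfrac12\sqrt{2H(\nu,\eta)} = \sqrt{H(\nu,\eta)/2}$, which is stronger by a factor of $2$ than the inequality recorded in the paper; the paper's version is a harmless slackening of the constant.
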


The Cziszar-Kullback inequality shows that reducing relative entropy between distributions is a bona fide way of making two distributions close to each other in the TV norm. This was the content of the main result  \parencite{legoll2010effective} which showed how to reduce the relative entropy between the $\mathcal{P}(Y_t)$ and $\mathcal{P}(Z_t)$ distributions on $\mathbb{R}^d$ induced by $Y_t$ and $Z_t$. In \parencite{duong2018quantification}, this result was generalized for a $d$-dimensional collective variable where \cite{duong2018quantification} imposed a few additional assumptions: 

\begin{assumption}
\label{ass: rho}
    There exists $\rho > 0$ such that for every probability measure $\nu$ satisfying $\nu \ll \mu_{z}$
    \begin{align}
        H(\nu, \mu_{z}) \leq \frac{1}{2\rho}\int_{\Sigma_{z}}\left\|\nabla \ln \left( \frac{d\nu}{d\mu_z}\right)\right\|^2\,d\nu. \label{eq: log sobolev}
    \end{align}
    The above inequality is the \emph{log-Sobolev} inequality and $\rho$ is the log-Sobolev constant. 
\end{assumption}

The log-Sobolev constant determines how quickly a Markov chain mixes to its invariant distribution. In particular, if $\partial_{t}\mu_t = \mathcal{L}^{\dagger}\mu_t$ is the Fokker-Planck equation and $\mathcal{L}^{\dagger}\mu = 0$ is the stationary solution, then by Boltzmann's H-theorem (see e.g. \cite[Lemma 2]{vempala2019rapid}), we have 
\begin{align}
    H(\mu_t, \mu) \leq \exp(-2\rho t)H(\mu_0, \mu). \label{eq: boltzmann h-theorem}
\end{align}
If $\rho$ is large, then the associated diffusion process on $\mu_z$ mixes quickly. Since Assumption \eqref{ass: rho} stipulates that every $\mu_z \in \mathbb{R}^d$ shares a single $\rho$, this implies that processes on $\Sigma_z$ mix \emph{uniformly quickly}. Next, we turn to an assumption regarding the local mean force $F_z$ in \eqref{eq: effective dynamics}. We can endow $\Sigma_z$ with a geodesic distance: 
\begin{align}
    d_{z}(x_1, x_2) := \inf\left\{\int_{0}^{1}\|\gamma'(t)\|\,dt \mid \gamma(0) = x_1, \gamma(1) = x_2, \gamma(t) \in \Sigma_{z}\right\}.
\end{align}

\begin{assumption}
\label{ass: kappa}
    We assume that the local mean force is $\kappa$-Lipschitz from $\Sigma_z$ to $\mathbb{R}^d$ as measured by the domain metric $d_z$ and the range metric $\|x-y\|_{M}^2 = (x-y)^{\top}M(x-y)$: 
    \begin{align}
        \kappa := \underset{z \in \mathbb{R}^d}{\textsf{sup}}\:\underset{x_1,x_2 \in \Sigma_z}{\textsf{sup}}\frac{\|F_z(x_1) - F_z(x_2)\|_{M(z)}}{d_{z}(x_1, x_2)} < +\infty, \label{eq: kappa}
    \end{align}
    where $F_z$ is the local mean force given by \eqref{eq: local mean force}.
\end{assumption}

The last assumption relates to the variation of $D\xi D\xi^{\top}$ from its mean $M(z)$ \eqref{eq: effective_A} on each level set $\Sigma_z$. 

\begin{assumption}
\label{ass: lambda}
    Since $D\xi$ is smooth, its restriction to $\Sigma_z$ is well-defined and is measurable. Let $L(z): \mathbb{R}^d \to \mathbb{R}$ be given by 
    \begin{align}
        L(z) := \|(M(z) - D\xi(x)D\xi(x)^{\top})(D\xi(x)D\xi(x)^{\top})^{-1/2}\|_{L^{\infty}(\Sigma_{z})}.
    \end{align}
    We assume that
    \begin{align}
         \lambda := \|L(z)\|_{L^{\infty}(\mathbb{R}^d)} < +\infty. 
    \end{align} 
\end{assumption}
The constant $\lambda$, in some sense, measures how far $\xi$ is from being an affine function. If, for instance, $\xi = Ax + b$ then $\lambda = 0$ because $D\xi = A$ and $M(z) = AA^\top$. 
% \begin{remark}
%     The definition of $\kappa$ can seem overtly technical but it was shown in \parencite{legoll2010effective} that reducing this constant is crucial in choosing a collective variable. We will illustrate this argument in the next section. 
% \end{remark}

\parencite{legoll2010effective} proved the following result which was later generalized by \parencite{duong2018quantification}:

\begin{theorem}
\label{thm: duong + legoll} Let $\mathcal{P}(Z_t), \mathcal{P}(Z_t)$ denote the distributions of $Z_t, Y_t$ and suppose that $Z_0$ and $Y_0$ are identically distributed. Under the Assumptions \ref{ass: existence and uniqueness}, \ref{ass: xi}, \ref{ass: rho}, \ref{ass: kappa}, and \ref{ass: lambda},
\begin{align}
    H(\mathcal{P}(Z_t), \mathcal{P}(Y_t)) \leq \frac{1}{4}\left(\lambda + \frac{\kappa^{2}\beta^{2}}{\rho^2}\right)\left(H(\mathcal{P}(X_0), \mu) - H(\mathcal{P}(X_t), \mu)\right). \label{eq: relative entropy estimate}
\end{align}
If $D\xi$ is constant on the level set $\Sigma_z$ then $\lambda = 0$. 
\end{theorem}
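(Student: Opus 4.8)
The plan is to follow the entropy-dissipation strategy of \cite{legoll2010effective}, carried out in the $d$-dimensional anisotropic setting of \cite{duong2018quantification}. Write $\psi(t,\cdot)$ for the density of the true coarse-grained marginal $\mathcal P(Y_t)=\xi_\#\mathcal P(X_t)$, $\phi(t,\cdot)$ for the density of the effective law $\mathcal P(Z_t)$, and $\mu_t$ for the density of $X_t$ (so $\mu_t\to\mu$). Set $E(t):=H(\mathcal P(Z_t),\mathcal P(Y_t))=\int_{\mathbb{R}^d}\phi\,\ln(\phi/\psi)\,dz$. The first step is to identify the two Fokker--Planck equations these densities satisfy. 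For $\phi$ this is just the forward Kolmogorov equation of \eqref{eq: effective dynamics} with the frozen-$\mu$ coefficients $b,M$. For $\psi$ one integrates the full Fokker--Planck equation $\partial_t\mu_t=\mathcal L^{\dagger}\mu_t$ over the level sets $\Sigma_z$ using the co-area formula \eqref{eq: coarea formula}; this yields a \emph{closed} PDE for $\psi$ with the same structural form as the effective equation, but whose coefficients $\bar b_t(z),\bar M_t(z)$ are the conditional expectations of $\mathcal L\xi$ and $D\xi D\xi^{\top}$ taken against the \emph{time-dependent} conditional law $\mu_t(\cdot\mid\xi=z)$ rather than against $\mu_z$. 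Because $\mu_0=\mathcal P(X_0)$ is propagated consistently, $\psi(0,\cdot)=\phi(0,\cdot)$ and $E(0)=0$.

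Next I would differentiate $E$ in time. Substituting the two PDEs and integrating by parts --- legitimate under Assumptions \ref{ass: existence and uniqueness}--\ref{ass: xi}, which guarantee the smooth foliation by the $\Sigma_z$ and enough regularity/integrability --- yields, after bookkeeping of the divergence terms, an identity of the schematic form
\begin{align*}
\frac{dE}{dt} = -\beta^{-1}\int_{\mathbb{R}^d}\phi\,\big|M^{1/2}\nabla\ln(\phi/\psi)\big|^2\,dz
+ \int_{\mathbb{R}^d}\phi\,\big\langle \nabla\ln(\phi/\psi),\ (\bar b_t - b) + \beta^{-1}\mathrm{div}(\bar M_t - M)\big\rangle\,dz,
\end{align*}
i.e.\ entropy is dissipated by a relative Fisher-information term and forced by the discrepancy between the true conditional coefficients and the frozen ones. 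The diffusion discrepancy $\bar M_t - M$ is controlled pointwise on $\Sigma_z$ by $\lambda$ (Assumption \ref{ass: lambda}): since $\lambda$ is an $L^{\infty}$ bound of $(D\xi D\xi^{\top} - M(z))(D\xi D\xi^{\top})^{-1/2}$ over the slice, it bounds the average of this quantity against \emph{any} probability measure on $\Sigma_z$, in particular against $\mu_t(\cdot\mid\xi=z)$. The drift discrepancy is the delicate piece: rewriting $\bar b_t - b$ through the local mean force $F_z$ of Lemma \ref{lem: mean force lemma}, its $\kappa$-Lipschitzness (Assumption \ref{ass: kappa}) with respect to the geodesic metric $d_z$ gives $\big|\bar b_t(z) - b(z)\big|_{M(z)^{-1}} \le \kappa\, W_{2,d_z}\!\big(\mu_t(\cdot\mid\xi=z),\,\mu_z\big)$, and the log-Sobolev inequality of Assumption \ref{ass: rho} --- via the Talagrand/Otto--Villani transport inequality it implies --- turns this Wasserstein distance into $\rho^{-1}$ times the square root of the per-slice relative Fisher information of $\mu_t(\cdot\mid\xi=z)$ against $\mu_z$.

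To close the estimate I would apply a weighted Young inequality to the cross term, splitting it as $\tfrac12\beta^{-1}(\text{relative Fisher information in the }z\text{-variable}) + (\text{prefactor})\times(\text{slice-wise Fisher information})$, absorb the first piece into the dissipation term, and then use the \emph{tensorization} identity that the $z$-integral of the slice-wise Fisher informations plus the reconstructed horizontal contribution equals the full relative Fisher information of $\mu_t$ with respect to $\mu$ --- which, by the $H$-theorem \eqref{eq: boltzmann h-theorem} for $X_t$, is exactly $-\beta\,\tfrac{d}{dt}H(\mathcal P(X_t),\mu)$. Tracking constants through the $\beta$-factors carried by the drift and the $\tfrac14$ from Young's inequality gives $\tfrac{dE}{dt}\le \tfrac14\big(\lambda + \kappa^2\beta^2/\rho^2\big)\big(-\tfrac{d}{dt}H(\mathcal P(X_t),\mu)\big)$; integrating on $[0,t]$ with $E(0)=0$ yields \eqref{eq: relative entropy estimate}. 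The final claim is immediate: if $D\xi$ is constant on $\Sigma_z$ then $D\xi D\xi^{\top}$ coincides with its own slice-average $M(z)$, so $L(z)\equiv 0$ and hence $\lambda=0$.

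The step I expect to be the main obstacle is the drift term: quantifying how far the time-dependent conditional law $\mu_t(\cdot\mid\xi=z)$ sits from the equilibrium conditional $\mu_z$ and transporting that bound to the effective drift without degrading the $\rho^{-1}$ scaling. This is precisely where all of Assumptions \ref{ass: rho}--\ref{ass: lambda} are consumed and where the $M$-weighted bookkeeping of \cite{duong2018quantification} genuinely departs from the scalar argument of \cite{legoll2010effective}; a secondary technical point is rigorously justifying the differentiation under the integral and the integrations by parts, which is where the integrability in Assumption \ref{ass: existence and uniqueness} and the uniform ellipticity in Assumption \ref{ass: xi} are needed.
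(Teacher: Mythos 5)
This theorem is not proved in the paper: it is quoted as a result of \cite{legoll2010effective} (scalar CV case) generalized by \cite{duong2018quantification} (vector CV case), so there is no in-paper proof against which to compare your attempt. Your reconstruction does, however, faithfully match the entropy-dissipation strategy of those references: set up the two Fokker--Planck equations (the marginal one for $\xi_\#\mathcal{P}(X_t)$ with time-dependent conditional coefficients obtained via the co-area disintegration, and the effective one with frozen-$\mu$ coefficients); differentiate the relative entropy and split it into a relative Fisher-information dissipation term plus forcing terms from the coefficient discrepancies; bound the diffusion discrepancy pointwise on each $\Sigma_z$ by $\lambda$; bound the drift discrepancy by combining the $\kappa$-Lipschitzness of the local mean force with the Otto--Villani/Talagrand inequality and the log-Sobolev inequality on the slices, which converts a transport distance to slice-wise Fisher information with the correct $\rho^{-1}$ scaling; absorb the cross term via a weighted Young inequality (which is where the $\tfrac14$ prefactor arises); and finally use the Fisher-information disintegration together with the $H$-theorem \eqref{eq: boltzmann h-theorem} to identify the resulting bound with $-\tfrac{d}{dt}H(\mathcal{P}(X_t),\mu)$ and integrate. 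You are also correct that the drift term is where all of Assumptions \ref{ass: rho}--\ref{ass: lambda} are consumed and where the $M$-weighted bookkeeping of \cite{duong2018quantification} departs from the scalar argument; that reference packages precisely the Lipschitz + log-Sobolev + transport chain you describe into a standalone lemma. One small simplification: $E(0)=0$ follows immediately from the stated hypothesis that $Z_0$ and $Y_0$ are identically distributed, without needing to appeal to propagation of $\mu_0$.
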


The estimate \eqref{eq: relative entropy estimate} was referred to as an ``intermediate time" estimate in \cite{legoll2010effective}. The reason is as follows: as $t \to \infty$, it was shown that the effective and projected dynamics have the same invariant measure given by $\xi_{\#}\mu$, the pushforward of $\mu$ by $\xi$. Thus, the left-hand side of \eqref{eq: relative entropy estimate} decays to zero while the right-hand side does not, so the bound is not sharp in the $t \to \infty$ limit. Moreover, if $t = 0$ then both sides of \eqref{eq: relative entropy estimate} vanish. Thus, the estimate \eqref{eq: relative entropy estimate} prescribes control on intermediate $t \in (0, \infty)$. 

\subsubsection{Reducing the relative entropy and the orthogonality condition}

Suppose $V$ satisfies the scale separation condition:
\begin{align}
    V = V_0 + \epsilon^{-1}V_1. \tag{\ref{eq: split potential}} 
\end{align}
In this case, \parencite{legoll2010effective} computed the local mean force in \eqref{eq: local mean force} explicitly \parencite[p. 2150]{legoll2010effective}: 
\begin{align}
    F = \frac{2}{\epsilon} \frac{D\xi \nabla V_1}{\|D\xi\|_{2}^{2}} + \frac{D\xi \nabla V_0}{\|D\xi\|_{2}^{2}} - \beta^{-1}\nabla \cdot  \left(\frac{D\xi^{\top}}{\|D \xi\|_{2}^{2}}\right).
\end{align}
From now on, we will omit the subscript $z$ from the local mean force for brevity.
Since $\epsilon \ll 1$, for a general CV $F = O(1/\epsilon)$ and therefore $\kappa$ \eqref{eq: kappa} will be $O(1/\epsilon)$. However, if \eqref{eq: orthogonality condition}: 
\begin{align*}
    D\xi \nabla V_1 = 0
\end{align*}
is satisfied then $F= O(1)$. Moreover, in this case, $\rho$ \eqref{eq: log sobolev} becomes $O(\epsilon)$ and therefore, the relative entropy estimate reduces to (see \cite[p. 2150]{legoll2010effective} or \cite[Eq. (4.3)]{duong2018quantification}):
\begin{align}
    H(\mathcal{P}(Z_t), \mathcal{P}(Y_t)) \leq \frac{1}{4}\left(\lambda + O(\epsilon)\right)\left(H(\mathcal{P}(X_0), \mu) - H(\mathcal{P}(X_t), \mu)\right). \label{eq: relative entropy estimate, order epsilon}
\end{align}

\subsection{\cite{legoll2010effective}'s example} The above calculation shows how \eqref{eq: orthogonality condition} emerges as a way to make the error estimate \eqref{eq: relative entropy estimate} small. However, it also imparts a geometric requirement to the CV by stipulating that its level sets $\Sigma_z$ must be orthogonal to those of the confining term $V_1$. This was illustrated in \cite[Table 1]{legoll2010effective} where a toy two-dimensional system with $V = (x^2-1)^2 + 100(x^2 + y - 1)^2$ was considered and the quality of the effective dynamics \eqref{eq: effective dynamics} for two CVs, $\xi_1(x,y) = x$ and $\xi_2(x,y) = x\exp(-2y)$, was evaluated. Notably, $\xi_2$ satisfies \eqref{eq: orthogonality condition} while $\xi_1$ does not. The effective dynamics under $\xi_2$ were able to reproduce residence times with a relative error less than 1\%, while the effective dynamics under $\xi_1$ reproduced residence times with 23\% relative error (see \cite[Table 1]{legoll2010effective}). This example suggests that imposing \eqref{eq: orthogonality condition} significantly improves the CV design. 

Notably, the approach of using \eqref{eq: orthogonality condition} does not touch the constant $\lambda$ appearing in the reduced error estimate \eqref{eq: relative entropy estimate, order epsilon}. In fact, $\lambda$ is seemingly the more significant contributor to the error since it makes the error estimate \eqref{eq: relative entropy estimate, order epsilon} behave as $O(1)$ irrespective of \eqref{eq: orthogonality condition}. It has been noted in \cite{legoll2010effective} and \cite{duong2018quantification} that $\lambda = 0$ if (i) $\xi$ is affine or, more generally, (ii) $D\xi$ is constant on $\Sigma_z$ for every $z$. Both conditions (i) and (ii) are rather restrictive. 
%since they exclude several important nonlinear functions. 
Therefore, there seems to be a geometric trade-off in terms of reducing $\lambda$ and $\kappa$ simultaneously. In this paper, guided by \cite{legoll2010effective}'s example, we will focus on \eqref{eq: orthogonality condition} exclusively, leaving this fascinating trade-off as the subject of future investigation. 

% We will refer to \eqref{eq: orthogonality condition} as the \emph{Orthogonality Condition \eqref{eq: orthogonality condition}}. This will be the criterion we will use for learning the collective variable $\xi$. OC also has a geometric interpretation: when the potential satisfies \eqref{eq: split potential}, the trajectories of $X_t$ are confined to be near the level set $\mathcal{M} = \{V_1 = 0\}$, the \emph{residence manifold} of the dynamics $X_t$. If $0$ is a regular value of $V_1$, $\nabla V_1$ is the normal vector to the manifold $\mathcal{M}$ so OC implies that the gradients of the collective variable must be orthogonal to the normal vector of $\mathcal{M}$, and thus must reside in the tangent space $T_p \mathcal{M}$ at every point $p \in \mathcal{M}$. This geometric interpretation allows the extension of OC to potentials that may not necessarily satisfy \eqref{eq: split potential}, because the process $X_t$ may nonetheless reside on a low-dimensional manifold $\mathcal{M}$. In this case, a way to design the collective variable would be to impose \eqref{eq: orthogonality condition} with the normal vector to the manifold in place of $\nabla V_1$. 

\subsubsection{Pathwise estimates}

Next, we discuss another criterion for measuring the distance between two stochastic processes: the \emph{pathwise distance} defined as follows: 

\begin{definition}
Let $A_t$, $B_t$ be two $\mathbb{R}^d$-valued stochastic processes defined on $t \in [0,T]$ and adapted to the same filtration $\left(\mathcal{F}_{t}\right)_{0 \leq t \leq T}$. Then the pathwise distance between $A_t$ and $B_t$ is given by
\begin{align}
    d(A_t, B_t) := \mathbb{E}\left[\underset{t \in [0,T]}{\textsf{sup}}\left|A_t - B_t\right|\right]. \label{eq: pathwise distance}
\end{align}
\end{definition}
Note that two processes with zero pathwise distance induce, for a fixed $t$, the same distribution on $\mathbb{R}^d$ and therefore have zero relative entropy. The works \parencite{legoll2010effective,legoll2017pathwise,lelievre2019pathwise} describe the conditions under which the projected process \eqref{eq: projected dynamics} and effective dynamics \eqref{eq: effective dynamics} may be brought close in terms of this much stronger distance between stochastic processes. To set up this result, we define the projection operator $\Pi$ induced by the collective variable: 
\begin{equation}
    % \Pi = I - \sum_{i,j} (\Phi^{-1})_{ij} \nabla \xi_i \otimes (\nabla \xi_j) \label{eq: Pi}
    \Pi = I - \sum_{i,j} (\Phi^{-1})_{ij} \nabla \xi_i\nabla \xi_j^\top\label{eq: Pi}
\end{equation}
where $\Phi = D\xi D\xi^{\top}$. With the projection operator $\Pi$, the infinitesimal generator~(\ref{eq:generator}) can be decomposed as 
\begin{equation}
    \mathcal{L} = \mathcal{L}_0 + \mathcal{L}_1
\end{equation}
where 
\begin{align}
    \mathcal{L}_0 & = \frac{e^{\beta V}}{ \beta} \sum_{i,j} \delta_{ij}\frac{\partial}{\partial x_i} \left( e^{-\beta V} \Pi_{ij} \frac{\partial}{\partial x_j}\right), \\
    \mathcal{L}_1 & = \frac{e^{\beta V}}{ \beta} \sum_{i,j} \delta_{ij}\frac{\partial}{\partial x_i} \left( e^{-\beta V} (I - \Pi)_{ij} \frac{\partial}{\partial x_j}\right). 
\end{align}

Furthermore, we assume
\begin{align}
    \kappa^2_1 &:= \sum_i \int (\Pi \nabla \mathcal{L} \xi_i) \cdot (\Pi \nabla \mathcal{L}\xi_i) \,d\mu < \infty, \notag\\
    \kappa^2_2 &:= \sum_i \int (\Pi \nabla M_{ij}) \cdot (\Pi \nabla M_{ij})d\mu < \infty.
\end{align} 
Next, we state the result of the pathwise distance between the effective dynamics the coarse-grained process below in Theorem~\ref{thm: pathwise distance theorem}: 
\begin{theorem}{\cite[Theorem 1]{lelievre2019pathwise}}
\label{thm: pathwise distance theorem}
    There exists a constant $\gamma(V, \xi) > 0$ such that:
    \begin{align}
        d(Y_t, Z_t) \leq \frac{3t}{\beta \gamma} \left(\frac{27 \kappa^2_1}{2 \gamma} + \frac{32 \kappa^2_2}{\beta} \right)e^{Lt} \label{eq: pathwise distance general}
    \end{align}
    where $L = 3L^2_b + \frac{48 L^2_M}{\beta} + 1$. Here $L_M, L_b$ are Lipschitz constants defined in Assumption \eqref{ass: existence and uniqueness}. 
\end{theorem}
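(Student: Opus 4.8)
The plan is to prove the estimate by a \emph{synchronous coupling} of the projected process $Y_t=\xi(X_t)$ with the effective process $Z_t$, together with an \emph{averaging} argument that exploits the fast mixing of the conditional dynamics on the level sets $\Sigma_z$. One couples $Z_t$ to $Y_t$ by driving it with the very Brownian motion $B_t$ produced from $X_t$ through \eqref{eq: conditioning formulae}, so that, writing $e_t:=Y_t-Z_t$ and $\sigma(x)$ for the symmetric positive square root of $D\xi(x)D\xi(x)^{\top}$, Itô's formula and Duhamel's principle give
\[
  e_t=\int_0^t\bigl(\mathcal{L}\xi(X_s)-b(Z_s)\bigr)\,ds+\sqrt{2\beta^{-1}}\int_0^t\bigl(\sigma(X_s)-M^{1/2}(Z_s)\bigr)\,dB_s .
\]
The first step is to split each integrand into a \emph{Lipschitz part} and a \emph{fluctuation part}, using $Y_s$ as the pivot: $\mathcal{L}\xi(X_s)-b(Z_s)=\bigl(b(Y_s)-b(Z_s)\bigr)+\bigl(\mathcal{L}\xi(X_s)-b(Y_s)\bigr)$, and similarly with $M^{1/2}(Y_s)$ for the diffusion coefficient. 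The Lipschitz parts are bounded by $L_b|e_s|$ and, using Assumption~\ref{ass: xi} (which forces $M\succcurlyeq\delta_\xi^2 I_d$ and hence Lipschitz continuity of $M^{1/2}$), by a constant multiple of $L_M|e_s|$; they will feed a Gr\"onwall loop. By the definitions \eqref{eq: effective_b}--\eqref{eq: effective_A}, the fluctuation parts have \emph{zero conditional mean} on every fiber: $\mathbb{E}_{\mu_z}[\mathcal{L}\xi_i-b_i\circ\xi]=0$ and $\mathbb{E}_{\mu_z}[(D\xi D\xi^{\top})_{ij}-M_{ij}\circ\xi]=0$.

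The crux is to show that the time integrals of the fluctuation parts are small, with smallness quantified by $\gamma$, $\kappa_1$, $\kappa_2$. Here one uses the decomposition $\mathcal{L}=\mathcal{L}_0+\mathcal{L}_1$: for each $z$, $\mathcal{L}_0$ restricts to a reversible, ergodic diffusion on $\Sigma_z$ with invariant measure $\mu_z$, and $\gamma=\gamma(V,\xi)$ is the infimum over $z$ of the associated spectral gaps. Since the fluctuations are fiberwise mean zero, they lie in the range of $\mathcal{L}_0$ there, so one introduces correctors $\Psi_i$, $\Xi_{ij}$ solving the fiberwise Poisson equations $-\mathcal{L}_0\Psi_i=\mathcal{L}\xi_i-b_i\circ\xi$ and $-\mathcal{L}_0\Xi_{ij}=(D\xi D\xi^{\top})_{ij}-M_{ij}\circ\xi$ with zero conditional mean. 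The spectral-gap estimate bounds $\|\Pi\nabla\Psi_i\|_{L^2(\mu)}$ by $\gamma^{-1}\|\Pi\nabla(\mathcal{L}\xi_i)\|_{L^2(\mu)}$ — note $\Pi\nabla(b_i\circ\xi)=0$ because $\Pi\nabla\xi_k=0$ for the projector $\Pi$ of \eqref{eq: Pi}, so the right side is exactly the quantity summed in $\kappa_1$ — and likewise bounds $\|\Pi\nabla\Xi_{ij}\|_{L^2(\mu)}$ in terms of $\gamma^{-1}$ and the quantity summed in $\kappa_2$. Applying Itô's formula to $\Psi_i(X_t)$ then trades $\int_0^t(\mathcal{L}\xi_i-b_i\circ\xi)(X_s)\,ds=-\int_0^t\mathcal{L}_0\Psi_i(X_s)\,ds$ for the boundary terms $\Psi_i(X_0)-\Psi_i(X_t)$, a correction $\int_0^t\mathcal{L}_1\Psi_i(X_s)\,ds$, and a martingale whose bracket is proportional to $\int_0^t|\nabla\Psi_i(X_s)|^2\,ds$; an analogous Itô manipulation on $\Xi_{ij}$ converts the diffusion-fluctuation contribution to the stochastic integral into controllable boundary and drift terms, with Assumption~\ref{ass: xi} again used to pass between $D\xi D\xi^{\top}$ and $M^{1/2}$.

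To close, one substitutes these identities back into $e_t$, takes $\sup_{s\le t}$ and expectations, estimates the stochastic-integral and martingale contributions by the Burkholder--Davis--Gundy inequality (their brackets controlled, uniformly, by suitable negative powers of $\gamma$ times $\kappa_1^2$ and $\kappa_2^2$, up to powers of $\beta$), keeps the Lipschitz contributions as $\lesssim\int_0^t\mathbb{E}[\sup_{r\le s}|e_r|^2]\,ds$ with prefactors assembled from $L_b$ and $L_M$, and invokes Gr\"onwall on $u\mapsto\mathbb{E}[\sup_{s\le u}|e_s|^2]$. The Lipschitz constants then package into $L=3L_b^2+48L_M^2/\beta+1$ in the exponential, and the fluctuation terms into the prefactor $\tfrac{3t}{\beta\gamma}\bigl(\tfrac{27\kappa_1^2}{2\gamma}+\tfrac{32\kappa_2^2}{\beta}\bigr)$; tracking the explicit numerical constants, and passing from the squared quantity to $d(Y_t,Z_t)$, is bookkeeping once the structural estimate is in hand.

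The \textbf{main obstacle} is the averaging step: one must establish a \emph{uniform} (in $z$) spectral gap $\gamma(V,\xi)>0$ for the family $\{\mathcal{L}_0\ \text{on}\ \Sigma_z\}$, together with uniform-in-$z$ Poisson-equation estimates, and then verify that the ``leftover'' pieces — the $\mathcal{L}_1$-correction terms $\int_0^t\mathcal{L}_1\Psi_i(X_s)\,ds$ and the full (not merely $\Pi$-projected) corrector gradients entering the martingale brackets — are genuinely absorbed into $\kappa_1$, $\kappa_2$, $\gamma$ rather than introducing new uncontrolled constants. Subsidiary technical points are the existence and $C^2$ regularity of the correctors (for which the $C^3$ hypothesis on $\xi$ in Assumption~\ref{ass: xi} and the conditions \eqref{eq: integrability of V} on $V$ are needed so that Itô's formula applies) and the matrix-analytic bound $\|\sigma(x)-M^{1/2}(\xi(x))\|_F\le C_{\delta_\xi}\|D\xi(x)D\xi(x)^{\top}-M(\xi(x))\|_F$ valid on the ellipticity set $\{A\succcurlyeq\delta_\xi^2 I_d\}$.
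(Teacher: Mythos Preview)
The paper does not supply its own proof of this theorem: it is quoted verbatim as \cite[Theorem~1]{lelievre2019pathwise} and used as a black box, so there is no in-paper argument to compare against. Your sketch follows essentially the strategy of the cited source---synchronous coupling of $Y_t$ and $Z_t$, splitting into Lipschitz and fiberwise mean-zero fluctuation parts, solving fiberwise Poisson equations for $\mathcal{L}_0$ with the uniform spectral gap $\gamma$, trading time integrals for corrector boundary terms and martingales via It\^o, then BDG plus Gr\"onwall---and you have correctly identified the genuine technical content (uniform spectral gap, control of the $\mathcal{L}_1$-leftovers and full corrector gradients, and the square-root Lipschitz bound under ellipticity).
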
 

\subsubsection{Reducing the pathwise distance in the presence of scale separation.}

We consider the potential $V$ with the scale separation in the form of equation (\ref{eq: split potential}). In \parencite{lelievre2019pathwise} it was shown that the bound in \eqref{eq: pathwise distance general} may be made sharper (and in particular $O(\epsilon)$) if the collective variable $\xi$ is chosen to satisfy the \emph{projected orthogonality condition} ~\eqref{eq: orthogonality condition 2}
\begin{equation}
    (I - \Pi)^\top \nabla V_1 \equiv 0. \label{eq: orthogonality condition 2} \tag{POC}
\end{equation}
The \eqref{eq: orthogonality condition 2} concerns choosing the rows of the projection $(I-\Pi)$ to be orthogonal to $\nabla V_1$. Under \eqref{eq: orthogonality condition 2} and the scale separation condition \eqref{eq: split potential}, Theorem \ref{thm: pathwise distance theorem} can be applied to show that:  
\begin{equation}
    d(Y_t, Z_t) \leq t\left(\frac{C_1 \epsilon}{K} + \frac{C_2 \epsilon^2}{K^2} \right) e^{Lt} \sim O(\epsilon)
\end{equation}
Additionally, under \eqref{eq: orthogonality condition 2} we note 
\begin{align}
    \Pi^\top \nabla V = \Pi^\top \nabla V_0 + \frac{1}{\epsilon} \nabla V_1 = O(1/\epsilon), \quad (I - \Pi)^\top \nabla V = (I - \Pi)^\top \nabla V_0.
\end{align}

This implies that 
\begin{equation}
\label{eq: coefficient condition}
    \centering \textit{Operator $\mathcal{L}_0$ contains large coefficients, while $\mathcal{L}_1$ does not.}
\end{equation}
We term \eqref{eq: coefficient condition} the \emph{coefficient condition} (CC) and remark that \eqref{eq: orthogonality condition 2} is a special case of (CC). Moreover, in  \parencite{lelievre2019pathwise} it was noted that a good choice of collective variable in the most routinely occurring cases of scale separation falls under (CC). 

% \begin{remark}
% The theory of relative entropy summarized above is based on the works \parencite{duong2018quantification,legoll2010effective, legoll2017pathwise}, which concern all-atom dynamics modeled by \eqref{eq: OLD}. A more general analogue of Theorem \ref{thm: duong + legoll}, applicable to non-gradient and non-degenerate Ito diffusions, was proved in \parencite{hartmann2020coarse}. Moreover, our statement of Theorem \ref{thm: pathwise distance theorem} is a simplification of \parencite[Theorem 1]{lelievre2019pathwise} which applies to general reversible SDEs. 
% \end{remark}

\subsubsection{Simulating the effective dynamics} 
\label{subsec: simulating the effective dynamics}
CVs are essential for identifying and replicating the metastable dynamics of the all-atom dynamics \eqref{eq: OLD}. In particular, given $\xi$, we may identify metastable states $\tilde{A}, \tilde{B}$ typically corresponding to the wells of the free energy $f(z)$ \eqref{eq: free energy}. We then lift these metastable states to $\mathbb{R}^{N}$ by defining $A = \xi^{-1}(\tilde{A})$ and $B = \xi^{-1}(\tilde{B})$. A good CV should yield $A$ and $B$ such that the statistics of the rare transitions between $A$ and $B$ in all-atom space $\mathbb{R}^{N}$ are replicated by the effective dynamics in CV space $\mathbb{R}^{d}$. An important statistic is the transition rate $\nu_{AB}$ between $A$ and $B$ defined as follows:
\begin{equation}
\label{nuAB}
\nu_{AB} = \lim_{T\rightarrow\infty}\frac{N_{AB}}{T}.
\end{equation} 
Here $N_{AB}$ is the number of transitions made by $X_t$ from $A$ to $B$ observed during the time interval $[0,T]$. Using the framework of transition path theory (TPT)\cite{weinan2006towards}, the transition rate can be described via the committor function. In particular, the committor $q$ solves the \emph{committor PDE}
\begin{align}
\begin{cases}
   \mathcal{L}q(x) = 0, & x \in\Omega_{AB}:= \mathcal{M} \setminus (A\cup B), \\
   q(x) = 0, &x\in \partial A\\
    q(x) = 1,& x\in \partial B. 
   \end{cases}\label{eq: committor pde}
\end{align}
Then, using $q$, $\nu_{AB}$ may be expressed as: 
\begin{equation}
    \nu_{AB} = \beta^{-1} Z^{-1} \int_{\Omega_{AB} }\|\nabla q(x)\|^2\exp(-\beta V(x))dx. \label{eq: transition rate}
\end{equation}
Assuming that $\xi$ separates the metastable states $A$ and $B$, the effective dynamics may be used to numerically calculate $\nu_{AB}$ by finding the corresponding transition rate $\tilde{\nu}_{AB}$ for the low-dimensional effective dynamics. Below we describe a numerical method for computing $\tilde{\nu}_{AB}$.  Under Assumption \ref{ass: xi}, the effective dynamics \eqref{eq: effective dynamics} governing $Z_t$ is equivalent to the following dynamics proposed by \cite{maragliano2006string}: 
\begin{align}
        \widehat{Z}_t = -(M(\widehat{Z}_t)\nabla_{z}f(\widehat{Z}_t) + \beta^{-1}\nabla \cdot M(\widehat{Z}_t))\,dt + \sqrt{2\beta^{-1}}M(\widehat{Z}_t)^{1/2}\,dW_t. \label{eq: free energy dynamics}
    \end{align}
The equivalence between $Z_t$ and $\widehat{Z}_t$ has been noted in \cite{legoll2010effective} and it makes $Z_t$ easy to simulate by simulating $\widehat{Z}_t$ instead. This amounts to computing the diffusion tensor $M$ and the free energy, which can be done using well-known numerical methods \cite{maragliano2006string, Muller2021, Darve2008, Parrinello2017, barducci2008well}. 
By utilizing $M$ and $f$, the committor $\tilde{q}$ in collective variables can be computed as the solution to the following elliptic boundary value problem:
\begin{align}
\begin{cases}
    \beta^{-1}e^{\beta f(z)}\textsf{div}(e^{-\beta f(z)}M(z)\nabla \tilde{q}(z)) = 0, & z \in \tilde{\Omega}_{AB}, \\
    \tilde{q}(z) = 0, & z \in \tilde{A}, \\ 
     \tilde{q}(z) = 1, & z \in \tilde{B}.
        \end{cases}\label{eq: committor pde in CVs}
\end{align}
Here $\tilde{\Omega}_{AB} := \xi(\mathcal{M}) \setminus (\tilde{A} \cup \tilde{B})$. 
%Using $\tilde{q}$, $M$, and $f$ the low-dimensional 
The transition rate $\tilde{\nu}_{AB}$ in the CV space is given by: 
\begin{align}
    \tilde{\nu}_{AB} = \beta^{-1} Z^{-1}_{F} \int_{\tilde{\Omega}_{AB}} (\nabla \tilde{q})^{\top}M(z)\nabla \tilde{q} \exp{(-\beta f(z))}\,dz.
\label{eq: transition rate in CVs}
\end{align}
Computing $\nu_{AB}$ involves solving the high-dimensional committor problem \eqref{eq: committor pde} followed by high-dimensional numerical integration of \eqref{eq: transition rate}, which can be unfeasible or undesirable in practice. Instead, we can compute $\tilde{\nu}_{AB}$ by solving the low-dimensional BVP \eqref{eq: committor pde in CVs} and using the formula \eqref{eq: transition rate in CVs}. The discrepancy between the original transition rate $\nu_{AB}$ and the coarse-grained transition rate $\tilde{\nu}_{AB}$ is~\parencite{ZhangHartmannSchutte_2016}:
\begin{equation} 
\label{ZHS2016}
\nu_{AB} \le\tilde{\nu}_{AB}  = \nu_{AB} +\frac{1}{\beta}\int_{\Omega_{AB}}\nabla[q(x)-\tilde{q}(\xi(x))]^\top M(x)\nabla[q(x)-\tilde{q}(\xi(x))] \mu(x) dx.
\end{equation}
Thus, coarse-graining by a collective variable $\xi$ overestimates the transition rate unless $q(x) \equiv \tilde{q}(\xi(x))$.
%computed via effective dynamics \eqref{eq: effective dynamics}. 
It was also noted in \parencite{ZhangHartmannSchutte_2016} that the high-dimensional committor $q$ is the optimal CV. In fact, if $\xi = q$, then the effective dynamics reside in $[0,1]$. Moreover, for $z \in (0,1)$, the effective dynamics \eqref{eq: effective dynamics} become driftless with a diffusion coefficient $M(z) = \beta \nu_{AB}\exp{\left(\beta f(z)\right)}$. As a consequence, the committor is given by $\tilde{q}(z) = z$. Plugging these into \eqref{eq: transition rate in CVs}, we get $\tilde{\nu}_{AB} = \nu_{AB}$. Thus $q$ reproduces reaction rates \emph{exactly}. However, computing $q$ accurately by solving the high-dimensional PDE \eqref{eq: committor pde} is computationally prohibitive. Therefore, different methods are required to design CVs that minimally distort transition rates.

\section{Proposed methodology for learning CVs}
\label{sec: effective dynamics analysis}
The theory of quantitative coarse-graining reviewed above provides guidelines for CV design through analytical conditions for $\xi$ under which the error estimates \eqref{eq: relative entropy estimate} and \eqref{eq: pathwise distance} may hold and be minimized. Two such conditions are given by Assumption \eqref{ass: xi} which stipulates that $D\xi D\xi^{\top}$ is uniformly strictly positive definite and, \eqref{eq: orthogonality condition} and \eqref{eq: orthogonality condition 2} which make error estimates scale with $\epsilon$ in the potentials of the form $V = V_0 + \epsilon^{-1}V_1$, as in \eqref{eq: split potential}. In this section, we: 
\begin{enumerate}
    \item prove that \eqref{eq: orthogonality condition} and \eqref{eq: orthogonality condition 2} are equivalent, 
    \item numerically demonstrate that Assumption \ref{ass: xi} is not necessary for the reproduction of transition rates and may be relaxed to the requirement that $\|D\xi\|$ is bounded away from zero,
    %but may nonetheless require $D\xi$ to not entirely vanish, 
    and
    \item propose an algorithm based on \eqref{eq: orthogonality condition} for designing CVs for MD data. 
\end{enumerate}

\subsection{Conditions \eqref{eq: orthogonality condition} and \eqref{eq: orthogonality condition 2} are equivalent}

The theory on pathwise estimates and relative entropy provides two conditions, \eqref{eq: orthogonality condition} and \eqref{eq: orthogonality condition 2}, which may be used to improve the error estimates for effective dynamics and therefore to learn a collective variable. We now show that \eqref{eq: orthogonality condition} and \eqref{eq: orthogonality condition 2} are actually \emph{equivalent} and therefore the same condition provides simultaneous improvements in two different error metrics.

\begin{proposition}
\label{prop: oc poc equivalence}
    Let $\xi$ be a collective variable and $V$ be given by the decomposition $V= V_0 + \epsilon^{-1}V_{1}$ as in \eqref{eq: split potential}. Then \eqref{eq: orthogonality condition} and \eqref{eq: orthogonality condition 2} are equivalent. 
\end{proposition}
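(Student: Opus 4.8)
The plan is to unpack the matrix $I-\Pi$ from its definition \eqref{eq: Pi}, recognize it as (pointwise in $x$) the orthogonal projector onto the row space of $D\xi$, and then observe that both implications fall out immediately.

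First I would rewrite $I-\Pi$ in Jacobian form. Writing $\nabla\xi_i$ for the column vector whose transpose is the $i$-th row of $D\xi$, the definition \eqref{eq: Pi} gives
\[
    I-\Pi \;=\; \sum_{i,j}(\Phi^{-1})_{ij}\,\nabla\xi_i\,\nabla\xi_j^\top \;=\; D\xi^\top \Phi^{-1} D\xi,
\]
where $\Phi = D\xi D\xi^\top$ is invertible by Assumption \ref{ass: xi}. In particular $I-\Pi$ is symmetric (since $\Phi^{-1}$ is), so $(I-\Pi)^\top\nabla V_1 = (I-\Pi)\nabla V_1 = D\xi^\top\Phi^{-1}\bigl(D\xi\nabla V_1\bigr)$.

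The implication \eqref{eq: orthogonality condition}$\Rightarrow$\eqref{eq: orthogonality condition 2} is then trivial: if $D\xi\nabla V_1=0$ then $D\xi^\top\Phi^{-1}(D\xi\nabla V_1)=0$. For the converse, assume $(I-\Pi)^\top\nabla V_1 = D\xi^\top\Phi^{-1}D\xi\,\nabla V_1 = 0$ and left-multiply by $D\xi$; using $D\xi D\xi^\top=\Phi$ this collapses to $\Phi\Phi^{-1}D\xi\nabla V_1 = D\xi\nabla V_1 = 0$, which is \eqref{eq: orthogonality condition}. (Conceptually, $D\xi^\top\Phi^{-1}D\xi$ is the orthogonal projection of $\mathbb{R}^N$ onto $\operatorname{span}\{\nabla\xi_1,\dots,\nabla\xi_d\}=(\ker D\xi)^\perp$, so it annihilates a vector exactly when $D\xi$ does; applying this with the vector $\nabla V_1(x)$ at each $x$ gives the equivalence.)

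I do not expect any genuine obstacle: the claim is a pointwise linear-algebra identity. The only care required is bookkeeping — correctly matching the index notation in \eqref{eq: Pi} with the rows of the Jacobian $D\xi$, and flagging that Assumption \ref{ass: xi} is precisely what makes $\Phi$ invertible, so that $\Pi$ and hence condition \eqref{eq: orthogonality condition 2} are well-defined in the first place.
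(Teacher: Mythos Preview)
Your argument is correct and is, if anything, more streamlined than the paper's own proof. Both proofs rest on the same underlying fact — that $I-\Pi$ is the orthogonal projector onto the row space of $D\xi$ — but they establish and exploit it differently. The paper expands $D\xi$ in its truncated SVD $J=U_r\Sigma_rV_r^\top$, computes the entries of $I-\Pi$ term by term to arrive at $I-\Pi=V_rV_r^\top$, and then argues the equivalence via successive linear-independence steps ($U_r\Sigma_rV_r^\top\nabla V_1=0\Leftrightarrow V_r^\top\nabla V_1=0\Leftrightarrow V_rV_r^\top\nabla V_1=0$). You instead recognise $I-\Pi=D\xi^\top\Phi^{-1}D\xi$ directly from the definition and close the loop by left-multiplying by $D\xi$, which is shorter and avoids the SVD machinery entirely. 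The one thing the SVD formulation potentially buys is a natural generalisation to rank-deficient $D\xi$ (replacing $\Phi^{-1}$ by a pseudoinverse); your version makes explicit — correctly — that Assumption~\ref{ass: xi} is exactly what is needed for $\Phi^{-1}$, and hence $\Pi$ itself, to make sense.
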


\begin{proof}
%    We show that \eqref{eq: orthogonality condition} $\iff$ \eqref{eq: orthogonality condition 2} for every $x \in \mathbb{R}^{N}$. %To that end, first to shorten notation we set
    Let $J := D\xi(x)$. 
    %Moreover 
    Recall that 
    \begin{align}
        \Phi = \left(JJ^{\top}\right)^{-1}. \notag
    \end{align}
    %Now, we compute 
    Then the entries of $I-\Pi$ are:
    \begin{align}
        [I - \Pi]_{lk} = \sum_{i,j} (\Phi^{-1})_{ij} (\nabla \xi_i)_{l} (\nabla \xi_j)_{k} = J^{\top}_{l}\Phi^{-1}J_{k}, %= \textsf{Tr}\left(J^{\top}_{l}\Phi^{-1}J_{k}\right) = \textsf{Tr}\left(\Phi^{-1}J_{k}J^{\top}_{l}\right), 
    \end{align}
    where $J_k$ and $ J_l$ are the $k$th and $l$th rows of $J$. Now, let $J = U_{r}\Sigma_{r}V^{\top}_{r}$ be the truncated SVD of $J$ where $r$ is the rank of $J$. Then $\Phi = \left(JJ^{\top}\right)^{-1} = U_{r}(\Sigma_{r}\Sigma_{r}^{\top})^{-1}U^{\top}_{r}$. Moreover, note that we can write each column of $J$ in terms of the columns of $U$ with coefficients from the rows of $V$. The $k$th column of $J$ is therefore:
    \begin{align}
        J_k = \sum_{\alpha=1}^{r}\sigma_{\alpha}v_{\alpha k}u_{\alpha}. 
    \end{align}
    Now we compute:
    \begin{align}
        \Phi^{-1}J_{k} &= U_{r}(\Sigma_{r}\Sigma_{r}^{\top})^{-1}U^{\top}_{r}\sum_{\alpha=1}^{r}\sigma_{\alpha}v_{\alpha k}u_{\alpha} \\
        &= \sum_{\alpha=1}^{r}\sigma_{\alpha}v_{\alpha k}U_{r}(\Sigma_{r}\Sigma_{r}^{\top})^{-1}U^{\top}_{r}u_{\alpha} \notag\\
        &= \sum_{\alpha=1}^{r}\sigma_{\alpha}v_{\alpha k}u_{\alpha}\sigma_{\alpha}^{-2} = \sum_{\alpha=1}^{r}\sigma_{\alpha}^{-1}v_{\alpha k}u_{\alpha}. \notag
    \end{align}
    Then $[I- \Pi]_{lk}$ is given by:  
    \begin{align}
 [I - \Pi]_{lk} &= J^{\top}_{l}\Phi^{-1}J_{k} = 
%        \textsf{Tr}
        \left(\sum_{\beta=1}^{r}\sigma_{\beta}v_{\beta l}u_{\beta}^{\top}\sum_{\alpha=1}^{r}\sigma_{\alpha}^{-1}v_{\alpha k}u_{\alpha}\right) \\
        &= \left(\sum_{1 \leq \alpha,\beta \leq r}\sigma_{\beta}v_{\beta l}\sigma_{\alpha}^{-1}v_{\alpha k}u_{\beta}^{\top}u_{\alpha}\right) = \left(\sum_{1 \leq \alpha \leq r}\sigma_{\alpha}v_{\alpha l}\sigma_{\alpha}^{-1}v_{\alpha k}\right) \notag\\
        &= \sum_{1 \leq \alpha \leq r}v_{\alpha l}v_{\alpha k} = \left[V_{r}V_{r}^{\top}\right]_{lk}. \notag
    \end{align}
    Thus, $[I - \Pi]_{lk} = \left[V_{r}V_{r}^{\top}\right]_{lk}$ so $I - \Pi = V_{r}V_{r}^{\top}$. 
    
    Now recall that \eqref{eq: orthogonality condition} states 
    \begin{equation*}
    D\xi(x)\nabla V_1(x) = J\nabla V_1 =  U_r\Sigma_rV_r^\top\nabla V_1 =0.
    \end{equation*}
    Since columns of $U_r\Sigma_r$ are linearly independent, the last equation is equivalent to
    \begin{equation*}
        V_r^\top\nabla V_1 = 0.
    \end{equation*}
    This, in turn, by the linear independence of columns of $V_r$, is equivalent to  $[I - \Pi] \nabla V_1 = V_rV_r^\top\nabla V_1 = 0.$
    Thus \eqref{eq: orthogonality condition} is equivalent to \eqref{eq: orthogonality condition 2}. 
    %By reversing the same argument we get that \eqref{eq: orthogonality condition 2} implies \eqref{eq: orthogonality condition}. 
\end{proof}

\subsection{The diffusion tensor may be rank-deficient but must not vanish}

\label{subsec: classical cvs for butane}

Assumption \eqref{ass: xi}, stating that $ D\xi D\xi^{\top} \succcurlyeq \delta^{2}_{\xi} I_{d}$ for a positive  constant $\delta^{2}_{\xi}$, ensures that the characterization of the conditional expectations through the co-area formula \eqref{eq: coarea formula} and therefore the measure $\mu^{\xi} \propto \exp(-\beta f(z))$ is an invariant measure for the effective dynamics~\eqref{eq: effective dynamics} which is equivalent to \eqref{eq: free energy dynamics} under Assumption \eqref{ass: xi}. However, in the absence of Assumption \eqref{ass: xi}, the diffusion tensor $M(z)$ may vanish, leading to the loss of ergodicity in the free energy dynamics \eqref{eq: free energy dynamics}. We describe such a case for butane below. 

\subsubsection{A chain-like butane molecule} The all-atom dynamics of the butane molecule $C_{4}H_{10}$ in a solvent under high friction can be modeled using \eqref{eq: OLD} with atomic coordinates $X_t$ residing in $\mathbb{R}^{14 \times 3}$ (see Figure \ref{fig: butane system}). Specifically, the process $\theta(X_t)$ largely resides in the anti ($\theta = \pi$) or gauche ($\theta = \pi/3, 5\pi/3$) states (Figure \ref{fig: butane free energy}) with rare transitions between them. The anti-gauche transition has therefore received considerable attention in molecular dynamics as a small-scale but significant example of torsional transitions and has served as a benchmark example for algorithms seeking to study conformational dynamics in larger biomolecules such as polymer chains. 
\begin{figure}[h]
    \centering
    \includegraphics[width=0.8\linewidth]{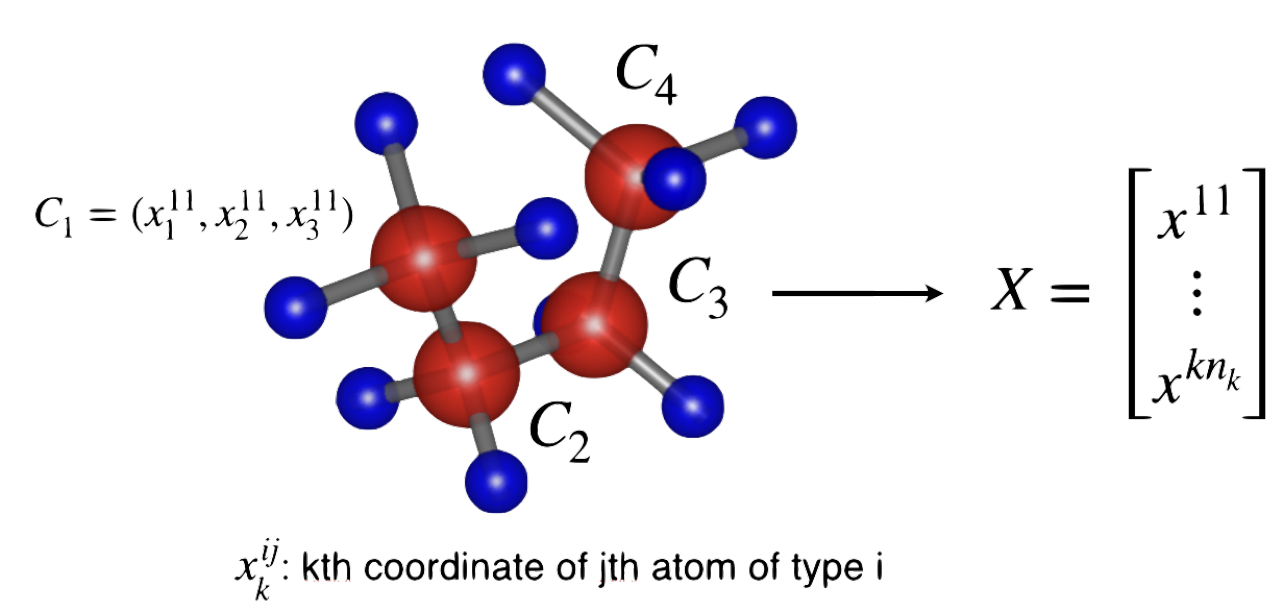}
    \caption{The butane molecule C${_4}$H$_{10}$ can be described in atomic coordinates in $\mathbb{R}^{42}$.}
    \label{fig: butane system}
\end{figure}
In practice, butane can be coarse-grained along the \emph{dihedral angle} $\theta$ in its carbon backbone, where it exhibits metastability. We simulate butane at 300K under Langevin dynamics with high friction, resulting in dynamics well-approximated by the overdamped regime \eqref{eq: OLD}. We provide additional details on this approximation in Appendix \ref{app: A} where we also describe how the high friction constant should be accommodated for in computing the transition rate via Equation \eqref{eq: transition rate in CVs}. 
\begin{figure}[h]
    \centering
    \includegraphics[width=\linewidth]{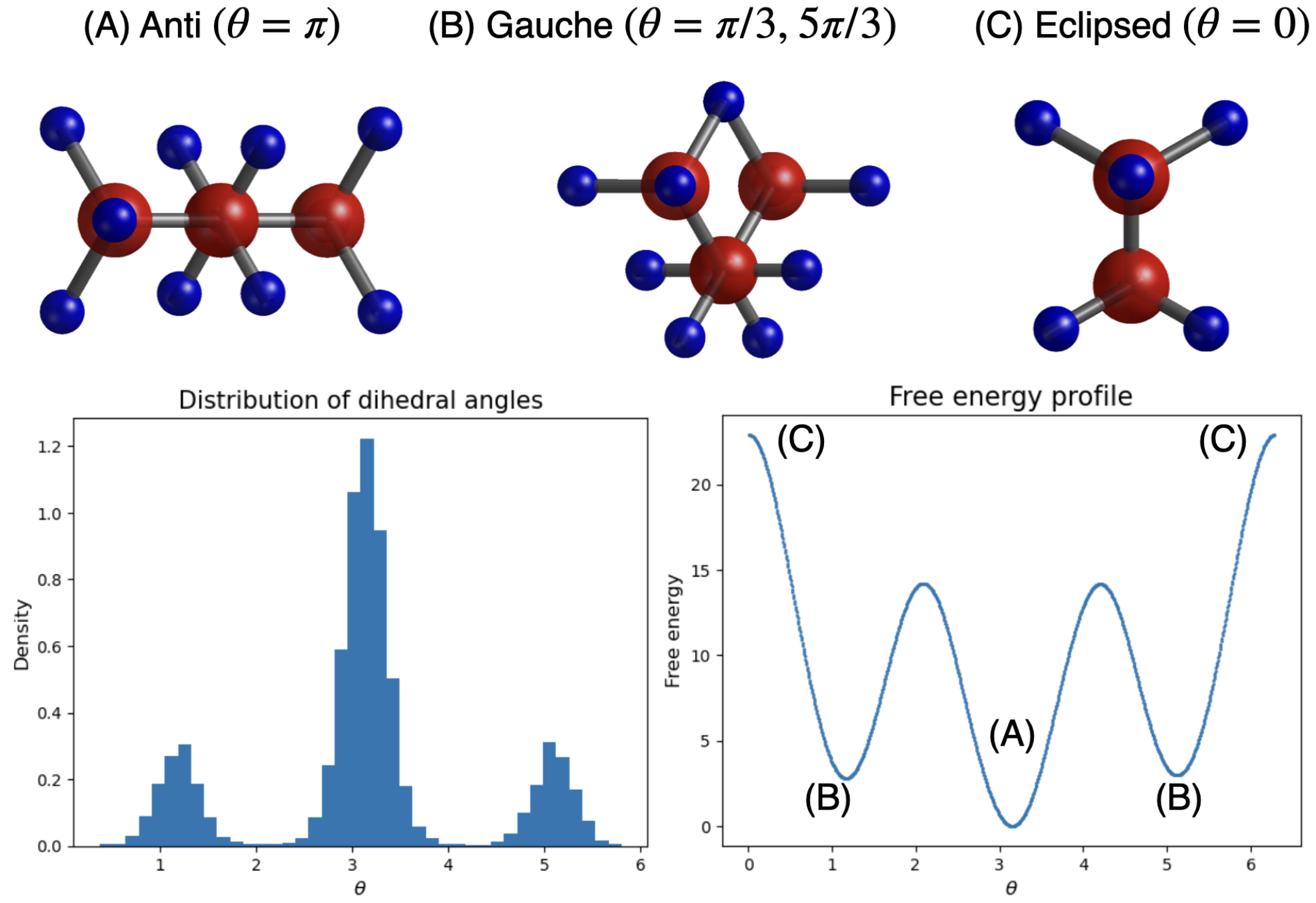}
    \caption{The dynamics of butane can be effectively coarse-grained by the dihedral angle $\theta$. In this case, the free energy $f$ has local minima near the anti and gauche states. Here, the system has been simulated at a temperature of 300 K.}
    \label{fig: butane free energy}
\end{figure}
The anti-gauche transition rate  obtained via brute-force all-atom simulation is $\nu_{AB} = 1.13 \pm 0.08 \times 10^{-2}$ ps$^{-1}$. Now we consider the following three collective variables based on the dihedral angle $\theta$ for reproducing the anti-gauche transition rate (Table \ref{tab: RatesTheta}):
\begin{enumerate}
    \item $\xi_1 = \theta(x)$: The collective variable $\xi_1$ reproduces the rate as $1.41 \times 10^{-2}$ ps$^{-1}$, within 3.5 standard deviations of the reference rate. Notably, the dihedral angle violates Assumption \eqref{ass: xi} since it is discontinuous at the point where $\theta = 0$ (i.e the location of the branch cut chosen for the angle). 
    \item $\xi_2 = \cos{\theta(x)}$: For the one-dimensional CV $\xi_2$ resulted in a transition rate of nearly 4.8 standard deviations of the reference rate, at $1.52 \times 10^{-2}$ ps$^{-1}$. We posit that this exageration occurs because both the free energy $F_2$ and one-dimensional diffusion tensor$A_2$ given by equations \eqref{eq: free energy} and \eqref{eq: effective_A} vanish (see central panel, Figure \ref{fig: classical cvs for butane}) leading to both the drift and diffusion terms in the free-energy dynamics \eqref{eq: free energy dynamics} to vanish. As a consequence, the dynamics are not irreducible and therefore non-ergodic. Moreover, the chosen gauche configurations form a disjoint set in the atomic dynamics but the CV $\xi_2$ merges these configurations into one state in the one-dimensional space. 
    \item $\xi_3 = (\sin{\theta(x)}, \cos{\theta}(x))$: We observe that for the two-dimensional CV $\xi_3$, the diffusion tensor $A_3$ has rank 1 (see right panel, Figure \ref{fig: classical cvs for butane}), but the transition rate is reproduced faithfully, at $1.19$ ps$^{-1}$, less than one standard deviation of the reference. This illustrates that Assumption \ref{ass: xi} is \emph{not necessary} for a CV to reproduce transition rates. Moreover, $\xi_3$ preserves the disjoint gauche configurations in CV space, unlike $\xi_2$. 
\end{enumerate}

\begin{figure}[h]
    \centering
    \includegraphics[width=\linewidth]{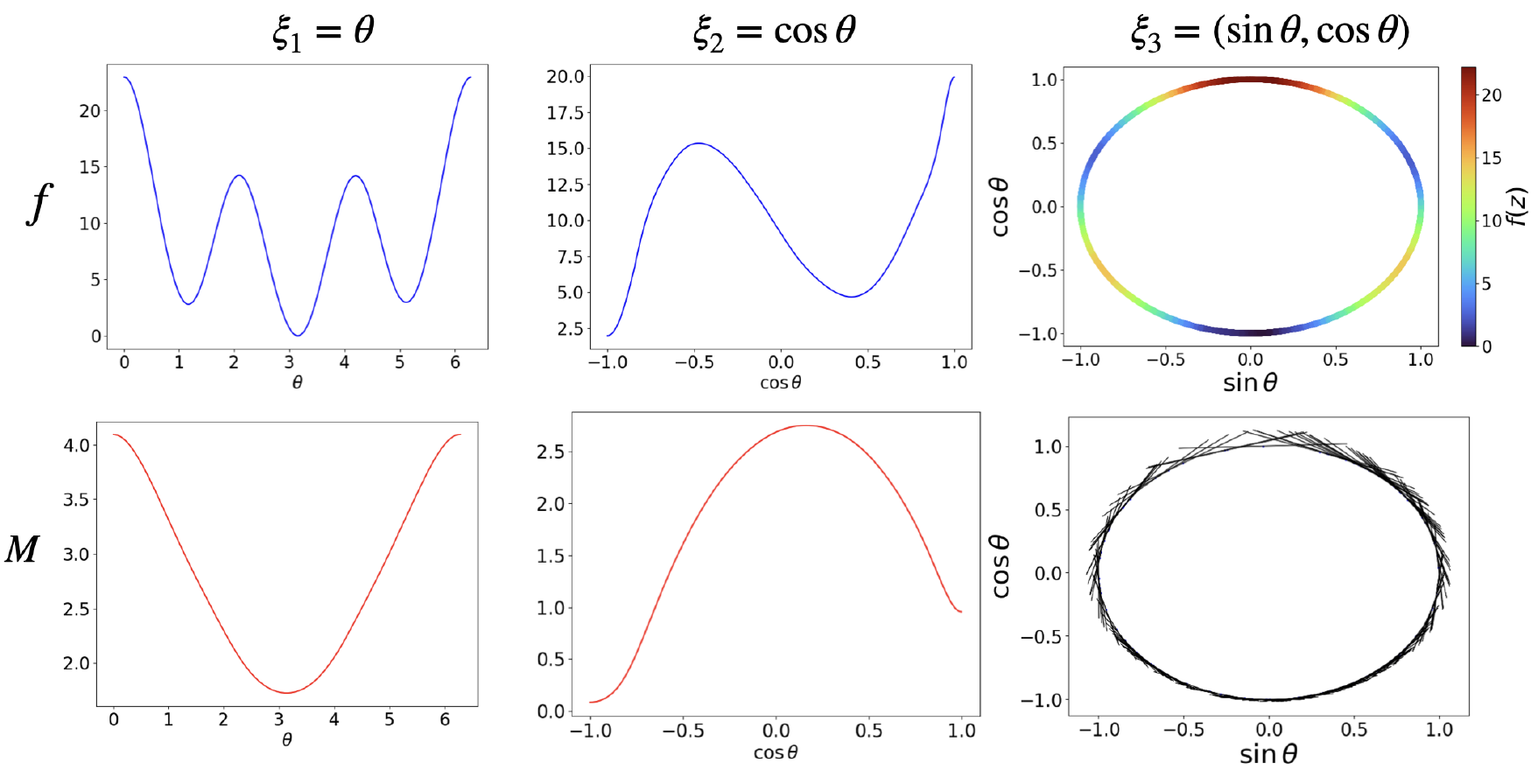}
    \caption{Free energy profiles (top) and diffusion tensors (bottom) for the collective variables based on the dihedral angle (left to right).}
    \label{fig: classical cvs for butane}
\end{figure}

\begin{table}[h!]
\begin{center}
\begin{tabular}{ll}
\toprule
Collective variable          & Transition rate (ps$^{-1}$)    \\
\midrule
Reference                    & 1.13 $\pm$ 0.08 $\times 10^{-2}$ \\
Dihedral angle $\theta$      & 1.41 $\times 10^{-2}$               \\
$(\sin \theta, \cos \theta)$ & $1.19 \times 10^{-2}$            \\
$\cos \theta$                & $1.52 \times 10^{-2}$             \\ 
\bottomrule
\end{tabular} \\

\caption{Collective variables for butane based on the dihedral angle, and the anti-gauche transition rates they yield.}
\label{tab: RatesTheta}

\end{center}
\end{table}

\subsection{Proposed algorithm} 
%We now propose learning a neural network CV $\xi$ through simulation data from overdamped Langevin dynamics \eqref{eq: OLD} 

 In this section, we propose an algorithm for machine-learning CV $\xi$ based on the orthogonality condition \eqref{eq: orthogonality condition}. Cautioned from the example in Section \eqref{subsec: classical cvs for butane}, we aim to ensure that $\xi$ not only preserves metastable states but also observes the condition that $D\xi \neq 0$ for any $x \in \mathcal{M}$. Among the many ways to ensure the non-existence of critical points, a proven effective strategy is to choose $\xi$ to be the encoder in an \emph{autoencoder} \cite{belkacemi2023autoencoders}. Additionally, to train such a CV satisfying OC, we would need knowledge of $\nabla V_1$ (the gradient of the confining potential), i.e, the normal vector to the residence manifold $\mathcal{M}$. However, the key obstruction to this approach is that a priori neither the confining term $V_1$ in \eqref{eq: split potential} nor the residence manifold $\mathcal{M}$ are known. In this work, we will resolve this obstruction by first obtaining a parameterization $\Psi: \mathbb{R}^{N} \to \mathbb{R}^{D+1}$ which projects $\mathcal{M}$ into a $D$ dimensional hypersurface $\widehat{\mathcal{M}} := \Psi(\mathcal{M})$. $\widehat{\mathcal{M}}$ will therefore serve as a surrogate for $\mathcal{M}$ and its property as a hypersurface enables learning a surrogate potential $\widehat{\Phi}$ vanishing on $\widehat{\mathcal{M}}$ with gradients aligned normally to the $\widehat{\mathcal{M}}$. Finally, we learn $\widehat{\xi}$ which satisfies the \eqref{eq: orthogonality condition} $\nabla \widehat{\xi} \cdot \nabla \widehat{\Phi} = 0$. Notably, this $\widehat{\xi}$ can be \emph{lifted} to an overall collective variable via composition with the parameterization, given by $\xi := \widehat{\xi} \circ \Psi$. Therefore, our method can be described as recovering a CV by enforcing the orthogonality condition in a latent space $\mathbb{R}^{D+1}$ instead of the ambient space $\mathbb{R}^{N}$. We summarize our method in Algorithm \eqref{alg: learning cv} and elaborate on each step of this method in further sections below. 

\begin{algorithm*}
\label{alg: learning cv}
     \caption{Learning CV that satisfies the orthogonality condition \eqref{eq: orthogonality condition}}
     \KwInput{Data $\mathcal{X} = \{x_i\}_{i=1}^{n} \subseteq \mathcal{M}$, invariant density $\mu_i \propto \exp{\left(-\beta V(x_i) \right)}$.}
     
     \textbf{Step 1: } Learn the residence manifold $\mathcal{M}$ as $\widehat{\mathcal{M}} \subseteq \mathbb{R}^{D+1}$ via a \emph{diffusion map} \cite{coifman2006diffusion} accompanied with a \emph{diffusion net} \cite{mishne2019diffusion} learned through Algorithm ~\ref{alg: ResManLearn} $$\Psi: \mathcal{M} \to \widehat{\mathcal{M}} \subseteq \mathbb{R}^{D+1}$$. 
     
     \textbf{Step 2: } Learn the surrogate potential: Train a neural network $$\widehat{\Phi}: \mathbb{R}^{D+1} \to \mathbb{R}$$ such that $\widehat{\Phi}$ vanishes on $\widehat{\mathcal{M}}$. 
     
     \textbf{Step 3: } Train an autoencoder where the encoder $\widehat{\xi}: \mathbb{R}^{D+1} \to \mathbb{R}$ satisfies $$\nabla \widehat{\xi} \cdot \nabla \widehat{\Phi} = 0$$. 
     
     \KwOutput{Collective variable $\xi = \widehat{\xi} \circ \Psi$.} 
\end{algorithm*}

We remark that Step 1, which involves computing the low-dimensional embedding, is the most technically intricate subroutine of this algorithm. We summarize the subroutine in Algorithm ~\ref{alg: ResManLearn} and provide a detailed walkthrough in the section below.

\section{Learning the residence manifold} 
\label{sec: resmanlearn}

Our goal is to learn a function $\Psi: \mathbb{R}^{N} \to \mathbb{R}^{D+1}$ such that $\mathcal{M}$ may be represented by the surrogate manifold $\widehat{\mathcal{M}} = \Psi(\mathcal{M})$. We will learn this map from a trajectory $\mathcal{X} = \{x_i\}_{i=1}^{n}$ where $x_i \sim X_{i\Delta T}$, i.e the $i$th point of a time-series simulation of \eqref{eq: OLD}. We will adopt diffusion maps \parencite{coifman2008diffusion} to learn this map $\Psi$, but this cannot be done straightforwardly on data sampled from the process $X_t$ due to a number of obstacles arising from the nature of MD simulations and the requirements of our proposed algorithm \eqref{alg: learning cv}. We highlight the key  obstacles below:
\begin{enumerate}
    \item \emph{Group invariance:} In typical MD simulations, the confining potential $V_{1}$ is a combination of several strong pairwise energies such as covalent interactions between atoms and is therefore invariant to actions of SE(3), the group of global translations and rotations of $\mathbb{R}^3$ (see Definition \eqref{def: groups}). Therefore, if $X \in \mathcal{M}$, then $gX \in \mathcal{M}$ where $g$ is a representation of an element in SE(3). Taking the action over all group elements, the orbit of any point $x$ under SE(3) actions forms a submanifold of $\mathcal{M}$. But this is not the underlying slow direction we would like to discover. 
    \item \emph{Hypersurface learning:} Since our goal is to learn a normal vector field on $\Psi(\mathcal{M})$, we will learn the residence manifold as a hypersurface (see Definition \eqref{def: hypersurface}) in $\mathbb{R}^{D+1}$. Not every manifold is embeddable as a hypersurface, so we only hope to achieve this hypersurface locally, allowing for possible self-intersections, knots, or lack of orientability. Therefore, we will only attempt to embed the residence manifold as an \emph{immersed} hypersurface.  
    \item \emph{Out of sample extension:} Typically, manifold learning algorithms will recover a map $\psi: \mathcal{X} \to \mathbb{R}^{D+1}$. We require that our proposed map $\Psi$ is an extension of $\psi$ to $\mathcal{M}$. 
\end{enumerate} 
We surmount the above obstacles as follows:
\begin{enumerate}
    \item \emph{Group invariance:} To account for SE(3) invariances, we use \emph{group invariant features} $\mathcal{F}: \mathbb{R}^{N} \to \mathbb{R}^{d_{\mathcal{F}}}$ satisfying $\mathcal{F}(x) = \mathcal{F}(gx)$ for all $g \in G$. To use group invariant features, we first project the raw data $\mathcal{X}$ onto a \emph{feature space} by computing a \emph{featurized point cloud} $\mathcal{X}_{\mathcal{F}} := \{\mathcal{F}(x_i)\}$. Assuming that $\mathcal{X}_{\mathcal{F}}$ is sampled from a \emph{featurized manifold} $\mathcal{M}_{\mathcal{F}}$, we compute a diffusion map $\psi_{\mathcal{F}}$ on $\mathcal{X}_{\mathcal{F}}$ instead of $\mathcal{X}$. We describe our choices of feature map in Section \eqref{subec: group invariant features}. 
    \item \emph{Hypersurface learning:} We apply the diffusion map algorithm to $\mathcal{X}_{\mathcal{F}}$ and obtain $\psi_{\mathcal{F}}: \mathcal{X}_{\mathcal{F}} \to \mathbb{R}^{D+1}$ such that $\psi_{\mathcal{F}}(\mathcal{X}_{\mathcal{F}})$ appears sampled from a hypersurface immersed in $\mathbb{R}^{D+1}$. To achieve such an immersion (see Definition \eqref{def: immersion}) we use independent eigencoordinate selection \parencite{chen2019selecting}.  
    \item \emph{Out of sample extension:} Once $\psi_{\mathcal{F}}$ has been obtained, we extend it to $\mathcal{M}_{\mathcal{F}}$ via \emph{diffusion nets} \parencite{mishne2019diffusion}, obtaining a neural network $\Psi^{\mathcal{F}}$. The overall parameterization will then be given by $\Psi := \Psi^{\mathcal{F}}\circ \mathcal{F}$. 
\end{enumerate}

\subsection{Data post-processing matters} \label{subec: group invariant features}
We contrast eight different choices of feature maps $\mathcal{F}$. We will assume $x \in \mathbb{R}^{N}$ is a point in all-atom space where $x$ is a single vector of all 3D positions of all atoms. This all atom vector may be reshaped into the all-atom matrix $X \in \mathbb{R}^{(N/3) \times 3}$ where the $n$th row in $X$ is given by $[x_{3n}, x_{3n+1}, x_{3n+2}]$. With this notational change, we describe the feature maps below: 
\begin{enumerate}
\item No featurization ({\sf NoFeaturization}): $\mathcal{F}(X) = X$. 
\item {\sf Recentering}: Here $\mathcal{F}(X) = X - \mu(X)$, where $\mu(X)$ is the mean of the rows of the atomic coordinates $X \in \mathbb{R}^{14 \times 3}$.
    \item Trajectory alignment ({\sf TrajAlign}): Aligning each configuration $X_i$ at time-step $t_i$ to the previous configuration $X_{i-1}$ via Procrustes alignment). {\sf TrajAlign} is a common method for post-processing MD trajectories but it does not correspond to an unambiguous function independent of the data. Consequently, this map is difficult to compute at inference time where a new configuration not in the MD trajectory must be featurized. 
    \item Computing the gram-matrix of the recentered configurations ({\sf GramMatrix}): $\mathcal{F}(X) = (X - \mu(X))(X-\mu(X))^{\top} \in \mathbb{R}^{(N/3) \times (N/3)}$,
    \item Computing the gram-matrix of the recentered configurations of the carbon atoms ({\sf GramMatrixCarbons}): $\mathcal{F}(X) = (MX - \mu(MX))(MX-\mu(MX))^{\top} \in \mathbb{R}^{4 \times 4}$. Here $M \in \mathbb{R}^{4 \times 14}$ is a \emph{mask} which selects the indices corresponding to the carbon atoms. 
    \item Global translation of the atoms to realign $C_1$ to the origin followed by rotation of the $C_1-C_2$ bond onto the $x$-axis ({\sf BondAlign(1,2)}), 
    \item Global translation of the atoms to realign $C_2$ to the origin followed by rotation of the $C_2-C_3$ bond onto the $x$-axis ({\sf BondAlign(2,3)}),
    \item Changing basis of the recentered configurations such that the $C_2, C_3, C_4$ atoms all lie in the $xy$-plane ({\sf  PlaneAlign}). 
\end{enumerate}
The above feature maps represent changing levels of group invariances, ranging from no invariance to complete invariance to rotations, reflections, and translations (E(3) invariance). In particular, {\sf TrajAlign} is not a feature map but nonetheless a commonly used pre-processing technique. {\sf NoFeaturization} has no invariance, {\sf Recentering} only has translation invariance, and {\sf GramMatrixCarbons} is only invariant to the actions of E(3) on a subspace of $\mathbb{R}^{N}$ spanned by the carbon atoms. {\sf BondAlign(1,2)}, {\sf BondAlign(2,3)}, and {\sf PlaneAlign} are SE(3) invariant but not E(3) invariant since they separate antipodal configurations. Finally, {\sf GramMatrix} is E(3) invariant as is well known from Hilbert's classical invariant theory. We visualize {\sf BondAlign(1,2)}, {\sf BondAlign(2,3)}, and {\sf PlaneAlign} in Figures \ref{fig: bondalign 1,2}, \ref{fig: bondalign 2,3}, and \ref{fig: PlaneAlign} respectively.

\begin{figure}[h]
    \centering
    \includegraphics[width=0.8\linewidth]{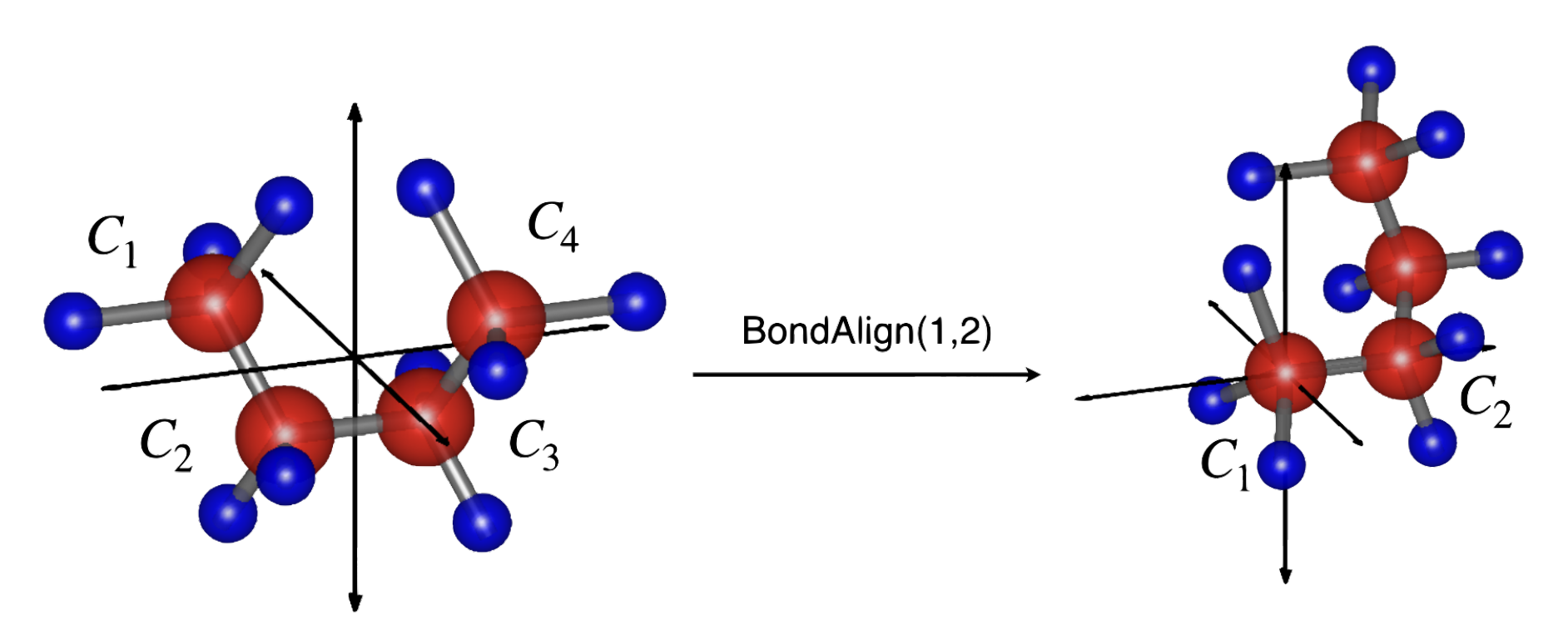}
    \caption{In {\sf BondAlign(1,2)}, the $C_1-C_2$ bond is aligned along the $x$-axis.}
    \label{fig: bondalign 1,2}
\end{figure}

\begin{figure}[h]
    \centering
    \includegraphics[width=0.8\linewidth]{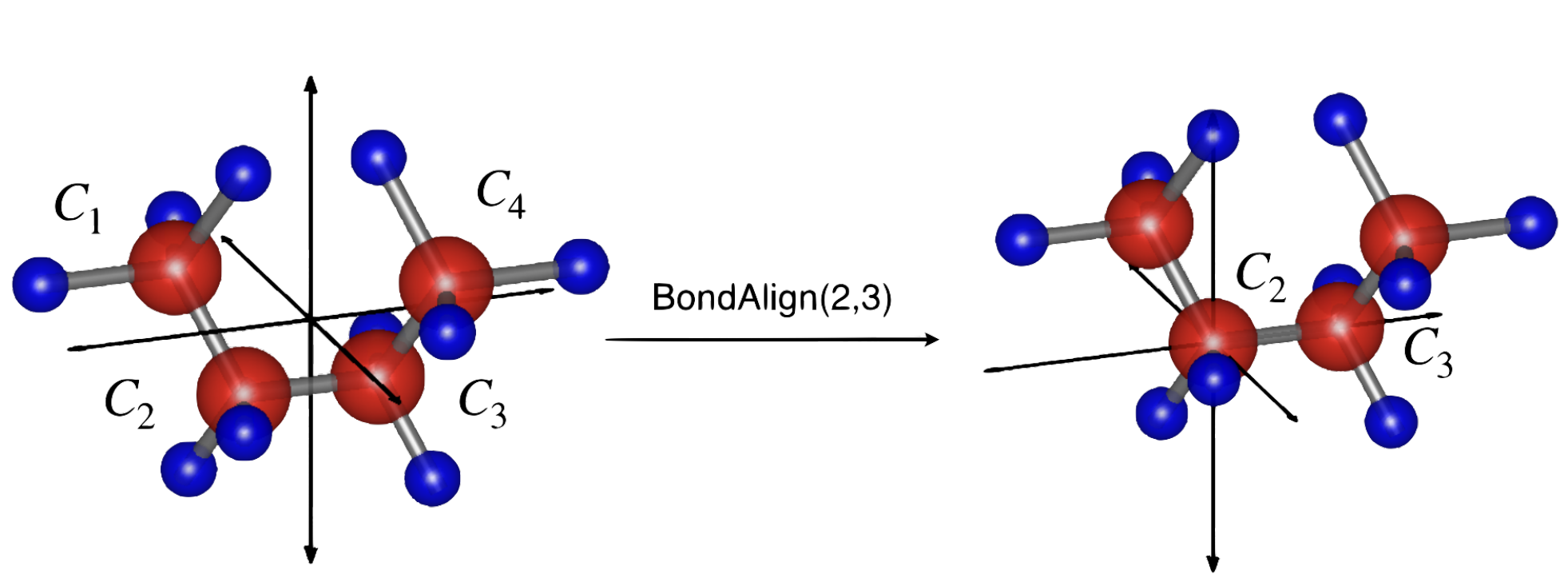}
    \caption{In {\sf BondAlign(2,3)}, the $C_2-C_3$ bond is aligned along the $x$-axis.}
    \label{fig: bondalign 2,3}
\end{figure}

\begin{figure}[h]
    \centering
    \includegraphics[width=0.8\linewidth]{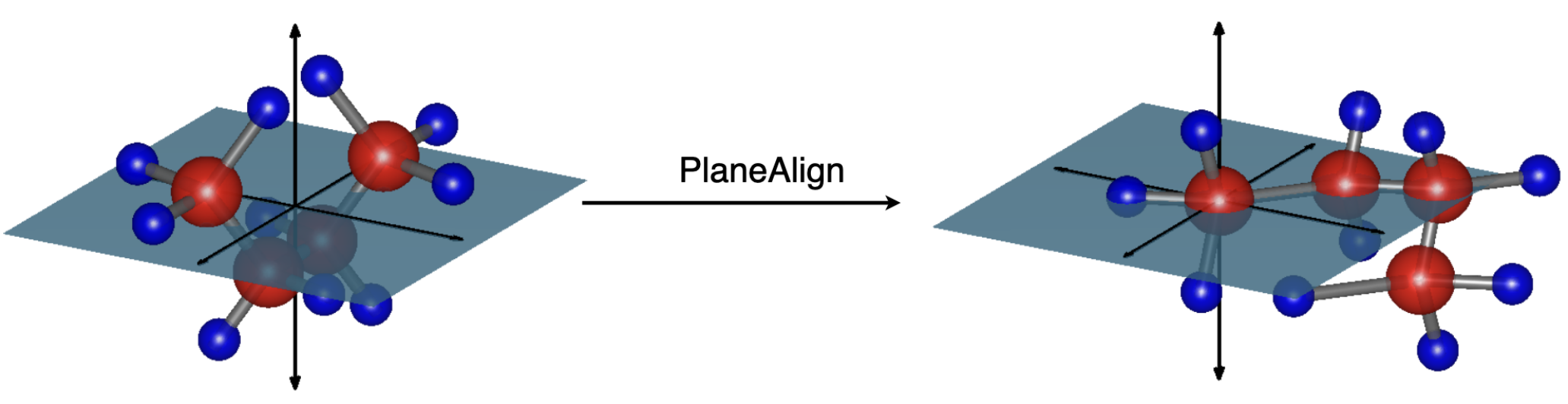}
    \caption{In {\sf PlaneAlign} the basis is changed such that the first three carbon atoms are in the $xy$-plane.}
    \label{fig: PlaneAlign}
\end{figure}

\subsection{Diffusion maps} 

We pass the data $\mathcal{X}$ through $\mathcal{F}$ to get a featurized pointcloud $\mathcal{X}_{\mathcal{F}} = \{p_i\}$ where $p_i = \mathcal{F}(x_i)$, which we posit is sampled from the featurized manifold $\mathcal{M}_{\mathcal{F}} := \mathcal{F}(\mathcal{M})$. To embed the featurized manifold in low dimensions we use diffusion maps \parencite{coifman2008diffusion}. Our choice is motivated by the fact that if $\Delta_{\mathcal{M}_{\mathcal{F}}}$ is the Laplace-Beltrami operator on $\mathcal{M}_{\mathcal{F}}$ and $\tilde{\psi}_i$ are the Laplacian eigenfunctions satisfying: 
\begin{align}
    -\Delta_{\mathcal{M_{\mathcal{F}}}}\tilde{\psi}_i = \lambda_i \tilde{\psi}_i,
\end{align}
then there exists an $m \in \mathbb{N}$ such that the map
\begin{align}
    \tilde{\psi}(x) :=  (\lambda_{1}\tilde{\psi}_1(x), \ldots, \lambda_{m}\tilde{\psi}_{m}(x)) \in \mathbb{R}^{m} \label{eq: Laplacian embedding}
\end{align}
is an \emph{embedding} \cite{BATES2014516}. This $m$ can be chosen to be sufficiently larger than the intrinsic dimension $\textsf{dim}(\mathcal{M}_\mathcal{F})$ of the featurized manifold. The diffusion map $\tilde{\psi}^{n,\epsilon}$ is in turn an approximation of the Laplacian embedding \eqref{eq: Laplacian embedding} when the density rescaling parameter is set to 1 in the original diffusion map algorithm \cite{coifman2008diffusion}. In this setting, $\tilde{\psi}^{n,\epsilon}$ is computed as follows: first, using a Gaussian kernel $k_{\epsilon}(p,q) = \exp(-\|p-q\|^{2}_{2}/\epsilon)$ tuned by a bandwidth $\epsilon$, we form the kernel matrix $K^{\epsilon}_{ij} = k_{\epsilon}(p_i,p_j)$ and calculate the kernel density estimate as the vector of its row means: 
 \begin{align}
     \rho^{n,\epsilon}(i) = \frac{1}{n}\sum_{j=1}^n K^{\epsilon}_{ij}.
 \end{align}
Next, we right-normalize the kernel matrix as
\begin{align}
K^{n,\epsilon} = K_{\epsilon}[D^{n,\epsilon}]^{-1},\quad D^{n,\epsilon} = {\sf diag}\{\rho^{n,\epsilon}(1),\ldots,\rho^{n,\epsilon}(n)\}.    
\end{align}
The right division by $D$ removes the effect of the sampling density $\rho$ and therefore normalizes the data such that it behaves as though it were sampled via the uniform distribution on $\mathcal{M}_{\mathcal{F}}$. We convert $K_{\epsilon,1}$ into a stochastic matrix of a Markov chain by left-normalizing $K_{\epsilon}$ to make its row sums equal to 1:
$$
P^{n,\epsilon} = [T^{n,\epsilon}]^{-1}K^{n,\epsilon}, \quad T^{n,\epsilon} = {\sf diag}(K^{n,\epsilon}\mathbf{1}_n). 
$$
Thus, using the kernel $k_{\epsilon}$, we have constructed a Markov chain on the data $\mathcal{X}$ whose transition matrix is given by $P_{\epsilon}$. Then, we form the generator: 
\begin{align}
    L^{n,\epsilon} = \frac{1}{\epsilon}\left(I - P^{n,\epsilon}\right). \label{eq: tmdmap generator}
\end{align}
Notably, for $f \in C^2(\mathcal{M}_{\mathcal{F}})$ and fixed $p \in \mathcal{M}_{\mathcal{F}}$, let $\mathbf{f} = [f(p_i)]_{p_i \in \mathcal{X}_{\mathcal{F}} \cup \{p\}}$. Then $L^{n,\epsilon}\mathbf{f}(p) \to \Delta_{\mathcal{M}_{\mathcal{F}}}f(p)$ as $n \to \infty$ and $\epsilon \to 0$, so $L^{n, \epsilon}$ is a meshless discretization of $\Delta_{\mathcal{M}_{\mathcal{F}}}$. Moreover, eigenvectors $\tilde{\psi}^{n,\epsilon}_j$ of $L^{n,\epsilon}$ are discretizations of $\tilde{\psi}_j$. Thus, the diffusion map $\tilde{\psi}^{n,\epsilon}: \mathcal{X}_{\mathcal{F}} \to \mathbb{R}^m$ is given by 

\begin{align}
    \tilde{\psi}^{n,\epsilon}(p_i) := \left(\lambda^{n,\epsilon}_{1}\tilde{\psi}^{n,\epsilon}_{1}(p_i), \ldots, \lambda^{n,\epsilon}_{m}\tilde{\psi}^{n,\epsilon}_{m}(p_i)\right).
\end{align}

In an abuse of notation we identify the map $\tilde{\psi}^{n,\epsilon}$ with the $n \times m$ matrix formed by stacking the rows $\tilde{\psi}^{n,\epsilon}(p_i)$. We summarize diffusion maps in Algorithm \eqref{alg: DMap}. 

\begin{algorithm}[h]
    \caption{Diffusion Maps}
    \KwIn{Dataset $\{p_i\}_{i=1}^{N} \subset \mathbb{R}^{d_{\mathcal{F}}}$, bandwidth $\epsilon > 0$, kernel function $k_{\epsilon}(x, y)$, number of eigenvectors $m$.}
    
    \textbf{Compute Affinity Matrix}
    $$K^{\epsilon}_{ij} = k_{\epsilon}(p_i,p_j)$$
    
    \textbf{Construct KDE}
    $$\rho^{n,\epsilon}(i) = \frac{1}{n}\sum_{j=1}^n K^{\epsilon}_{ij}.$$
    
    \textbf{Renormalize kernel}
    $$K^{n,\epsilon} = K_{\epsilon}[D^{n,\epsilon}]^{-1},\quad D^{n,\epsilon} = {\sf diag}\{\rho^{n,\epsilon}(1),\ldots,\rho^{n,\epsilon}(n)\}.$$
        
    \textbf{Construct Laplacian}
    $$L^{n,\epsilon} = \frac{1}{\epsilon}\left(I - P^{n,\epsilon}\right), \quad P^{n,\epsilon} = [T^{n,\epsilon}]^{-1}K^{n,\epsilon}, \quad T^{n,\epsilon} = {\sf diag}(K^{n,\epsilon}\mathbf{1}_n). $$

    \textbf{Construct diffusion map}
    $$ L^{n,\epsilon} \psi^{n,\epsilon} = \Lambda^{n,\epsilon}\psi^{n,\epsilon}$$
    
    \KwOut{Diffusion map embedding $\Lambda^{n,\epsilon}\psi^{n,\epsilon}$}
    \label{alg: DMap}
\end{algorithm}

\subsection{Independent eigencoordinate selection (IES)}

While $\tilde{\psi}^{n,\epsilon}$ approximates the immersion $\tilde{\psi}$ of $\mathcal{M}_{\mathcal{F}}$ in $\mathbb{R}^{m}$, this $m$ might be too large for the immersed manifold to be a hypersurface. We will therefore use IES \parencite{chen2019selecting} to select a subset of columns $S$ of the matrix $\tilde {\psi}^{n,\epsilon}$, $|S| = D+1$, such that the resulting map 
\begin{align}
    \psi_{\mathcal{F}} = (\tilde{\psi}^{n,\epsilon}_j(p))_{j \in S}
\end{align}
has rank $D$. To introduce IES, we need background on differential geometry found in Appendix \ref{app: B}. We assume that $\mathcal{M}_{\mathcal{F}}$ is a Riemannian manifold of dimension $\text{dim}(\mathcal{M}_{\mathcal{F}})$ and metric $\{g(p)\}_{p \in \mathcal{M}_{\mathcal{F}}}$. Let $\tilde{\psi}: \mathcal{M}_{\mathcal{F}} \to \mathbb{R}^m$ be a smooth immersion and $d\tilde{\psi}_{p}: T_{p}\mathcal{M}_{\mathcal{F}} \to T_{\tilde{\psi}(p)}\mathbb{R}^{m}$ be its differential at $p \in \mathcal{M}_{\mathcal{F}}$. Note that since $\tilde{\psi}$ is an immersion, $\text{rank}(d\tilde{\psi}) = \text{dim}(\mathcal{M}_{\mathcal{F}})$. However, $d\tilde{\psi}_{p}$ may not be full rank since $m \geq \text{dim}(\mathcal{M}_{\mathcal{F}})$. We can nonetheless compute its pseudo-inverse on vectors in $T_{\tilde{\psi}(p)\mathbb{R}^{m}}$ to define the \emph{pushforward metric} $g_{*}$: 
\begin{align}
    \langle u,v \rangle_{g_{*}} := \langle d\tilde{\psi}_{p}^{\textdagger} u, d\tilde{\psi}_{p}^{\textdagger} v\rangle_{g(p)}, \: u,v \in T_{\tilde{\psi}(p)}\mathbb{R}^{m}. \label{eq: pseudometric}
\end{align}
The pushforward metric gives a Riemannian structure to $\tilde{\psi}(\mathcal{M})$ that makes $\tilde{\psi}$ an isometry by definition. In local coordinates in $\mathbb{R}^{m}$, this results in a quadratic form at each $\tilde{\psi}(p_i)$: 
\begin{align}
    \langle u,v \rangle_{g_{*}} = u^{\top}G(p_i)v. 
\end{align}
Here $G(p_i) \in \mathbb{R}^{m \times m}$ is a local representation of the metric. Crucially, since $\text{rank}(d\tilde{\psi}_p) < m$, $G(p_i)$ will be positive semi-definite with $\text{rank}(G(p_i)) = \text{dim}(\mathcal{M}_{\mathcal{F}}) = D$. Thus, the first $D$ singular vectors of $G$ will span $T_{\tilde{\psi}(p)}\tilde{\psi}(\mathcal{M}_{\mathcal{F}})$, while the $(D+1)$th singular vector will be contained in $T_{\tilde{\psi}(p)}\tilde{\psi}(\mathcal{M}_{\mathcal{F}})^{\perp}$. Remarkably, these $G(p_i)$ can be estimated from the data $\mathcal{X}_{\mathcal{F}}$ via Algorithm~\ref{alg: rmetric-alg} introduced in~ \cite{perraul2013non} and named RMetric. 

\begin{algorithm}
    \SetKwInOut{Input}{Input}
	\SetKwInOut{Output}{Return}
	\SetKwComment{Comment}{$\triangleright $\ }{}
    \Input{Embedding $\tilde{\psi}^{n,\epsilon} \in \mathbb R^{n\times m}$, Laplacian $L^{n,\epsilon}$, target dimension $D+1$}
    \For{all $i = 1\to n, \alpha = 1\to m, \beta = 1\to m$}{
		$[\tilde{H}(i)]_{\alpha\beta} = \sum_{j\neq i} L^{n,\epsilon}_{ij} (\tilde{\psi}^{n,\epsilon}_{\beta}(p_j) - \tilde{\psi}^{n,\epsilon}_{\beta}(p_i))(\tilde{\psi}^{n,\epsilon}_{\alpha}(p_j) - \tilde{\psi}^{n,\epsilon}_{\alpha}(p_i))$
    }
    \For{$i = 1\to n$}{
		$U(i)$, $\Sigma(i) \gets $ {\sc ReducedRankSVD}$(\tilde{H}(i), D+1)$ \\
		$H(i) = U(i)\Sigma(i)U(i)^\top$\\
    $G(i) = U(i)\Sigma^{-1}(i)U(i)^\top$ \\
    }
    \Output{$G(i),H(i) \in \mathbb R^{ m\times m}$, $U(i)\in \mathbb R^{m\times (D+1)}$,  $\Sigma(i)\in \mathbb R^{(D+1)\times (D+1)}$, $i \in\{1,2,\ldots,n\}$}
    \caption{RMetric}
    \label{alg: rmetric-alg}
\end{algorithm}

Using RMetric, we compute these singular vectors $U(p_i) \in \mathbb{R}^{m \times (D+1)}$, noting that $m$ is the number of eigenfunctions computed and $D+1$ is the suggested dimension in which to eventually embed the hypersurface. Intuitively, a $D$-dimensional hypersurface (locally) has large $ D$-dimensional volume but has small $ (D+1)$-dimensional volume. Following this intuition, selecting $S \subseteq [m]:=\{1,2,\ldots,m\}$ thus amounts to selecting rows of $U(p_i)$ with large $D$-dimensional volume but small $(D+1)$-dimensional volume. We quantify this in the following way: suppressing the dependence on $p_i$ for a moment, let $U' = U(p_i)[:,[D]]$ be the $D$ column vectors which span $T_{\tilde{\psi}(p_i)}\tilde{\psi}(\mathcal{M}_{\mathcal{F}})$. Next let $U'_{S'} = U[S',:]$ be a selection of the rows of $U'$ where $|S'| = D$. We consider the log of the volume of the parallelepiped formed by the $D$-dimensional rows of the column-normalized $U'_{S'}$, written as follows: 
\begin{align}
    \text{Vol}(U'_{S'},p_i) = \log \left( \sqrt{\det \left(\left[U'_{S'}\right]^{\top}U'_{S'}\right)}\right) -\sum_{j=1}^{D}\log\|U'_{S'}[:,j]\|_{2}^{2}. \label{eq: S dim volume}
\end{align}
To estimate this volume over all points $p_i$, we take the mean of the normalized projected volume over all the data. Moreover, since $S'$ corresponds to a selection of Laplacian eigenvectors, we also promote selecting eigenvectors with greater smoothness. To do so, we add a regularization term $-\sum_{k \in S'}\lambda_k$ weighted by a parameter $\zeta$ which penalizes how much the selected eigenvectors oscillate. Note that we take a minus sign in front since we aim to \emph{maximize} the volume score with the \emph{lowest} frequency eigenfunctions. This leads to the overall score considered in IES \parencite{chen2019selecting}: 
\begin{align}
    R_{\zeta}(S') = \frac{1}{n}\sum_{i=1}^{n}\text{Vol}(U'_{S'},p_i) - \zeta\sum_{k \in S'}\lambda_k. \label{eq: overall volume score}
\end{align}

In the original implementation of IES, $R_{\zeta}(S')$ is maximized over all $S' \subseteq [m]$ with $ |S| = D$ to obtain an optimal set $S_*$. Here, we simply take $S_* = [D]$, the first $D$ eigenfunctions.  This algorithm is summarized in Algorithm \ref{alg: IES}. 

\begin{algorithm}
    \SetKwInOut{Input}{Input}
    \SetKwInOut{Output}{Return}
    \SetKwComment{Comment}{$\triangleright $\ }{}
 
    \Input{Tangent bundle $\{U'(p_i)\}_{p_i \in \mathcal{X}_{\mathcal{F}}} \subseteq \mathbb{R}^{m \times (D)}$, target dimension $|s| \leq D$, regularization parameter $\zeta$.}
    \For{$S' \subseteq [m]: |S'| = s, 1\in S'$}{
        $R_{\zeta}(S') \gets 0$ \\
        \For{$i = 1,\cdots, n$}{
            $U'_{S'} \gets U(p_i)[S', :]$ \\
            $R_{\zeta}(S') \gets R_{\zeta}(S') + n^{-1}\left[\log \left( \sqrt{\det \left(\left[U'_{S'}\right]^{\top}U'_{S'}\right)}\right) - \sum_{j=1}^{s}\log\|U'_{S'}[:,j]\|_{2}^{2} - \zeta\sum_{k \in S'}\right]. $
        }
        
    }
    $S^* = {\text{argmax}} R_{\zeta}(S')$ {\tt \# Alternative: S* = [D]} \\
    \Output{Independent eigencoordinates set $S^*$}
    \caption{Independent eigencoordinate search (IES)}
    \label{alg: IES}
\end{algorithm}

% Once $S_*$ has been selected, we let $S = S_* \cup \{k\}$ {\color{red} SHASHANK: what is $k$?} and consider $U_{S} := U[S, :]$ and attempt to \emph{minimize} $\text{R}(U_S,-\zeta)$. The justification behind this is as follows: when selecting $S_*$ we found the eigenvectors with the largest $D$-dimensional volume. Now to these selected eigenvectors we add an eigenvector such that the resulting coordinates have minimal $D+1$-dimensional volume, thus fulfilling the criteria for being a hypersurface. Moreover, we multiply $\zeta$ by $-1$ since we want to find the lowest frequency eigenfunction, which makes the data a hypersurface. After selecting such an optimal $k_*$, we take $S = S_* \cup \{k_*\}$ as the final selection of $D+1$ eigenvectors. We term this addition to IES as Hypersurface Search (HyperSearch) and summarize it in Algorithm \eqref{alg: HyperSearch}.

\begin{algorithm}
    \SetKwInOut{Input}{Input}
    \SetKwInOut{Output}{Return}
    \SetKwComment{Comment}{$\triangleright $\ }{}
 
    \Input{Tangent bundle $\{U(p_i)\}_{p_i \in \mathcal{X}_{\mathcal{F}}} \subseteq \mathbb{R}^{m \times (D+1)}$, Optimal $D$ independent coordinates $S^*$, regularization parameter $\zeta$.}
    \For{$k \in [D+1] \setminus S^*$}{
        $S \gets S^* \cup \{k\}, \quad R(S, \zeta) \gets 0$ \\
        \For{$i = 1,\cdots, n$}{
            $U_{S} \gets U(p_i)[S, :]$ \\
            $R_{-\zeta}(S) \gets R_{-\zeta}(S) + n^{-1}\left[\log \left( \sqrt{\det \left(\left[U_{S}\right]^{\top}U_{S}\right)}\right) - \sum_{j=1}^{s}\log\|U_{S}[:,j]\|_{2}^{2} + \zeta\sum_{k \in S}\lambda_k\right]. $
        }
        
    }
    $S \gets {\text{argmin}} \: R_{-\zeta}(S)$ {\tt \# Alternative: S = [D+1] }  \\
    \Output{Hypersurface set $S$}
    \caption{Hypersurface search (HyperSearch)}
    \label{alg: HyperSearch}
\end{algorithm}

Once $S^*$ has been selected, add the next highest frequency eigenfunction to it to form the set of coordinates $S^* \cup {k}$, but we need to select one more column $k$ of $\tilde{\psi}^{n,\epsilon}$ so that the $(D+1)$-dimensional volume is small. Thus, we seek $k\in[m]\backslash S^*$ such that  $R_{-\zeta}(S^*\cup\{k\})$ is \emph{minimized}. Note that we change the sign of the penalty term in $R$ at the minimization step to penalize for selecting oscillatory eigenvectors as we did in the maximization step.
We term this addition to IES as Hypersurface Search (HyperSearch) and summarize it in Algorithm \ref{alg: HyperSearch}.

\begin{algorithm}
    \SetKwInOut{Input}{Input}
    \SetKwInOut{Output}{Return}
    \SetKwComment{Comment}{$\triangleright $\ }{}
 
    \Input{Featurized data $\mathcal{X}_{\mathcal{F}}$, kernel bandwidth $\epsilon > 0$, number of eigenvectors $m$, emebdding dimension $D+1$, regularization parameter $\zeta$.}

    $\tilde{\psi}^{\epsilon,n}, L^{n,\epsilon} \gets $ diffusion map $(\mathcal{X}_{\mathcal{F}}, \epsilon, m)$ \\
    $\{U(p_i)\} \gets $ RMetric $(\tilde{\psi}^{\epsilon,n}, L^{n,\epsilon}, D+1)$ \\
    $S_* \gets $ IES$(\{U(p_i)[:, [D]]\}, \zeta)$ \\
    $S \gets $ HyperSearch$(\{U(p_i)\}, S_*, \zeta)$ \\
    $\psi_{\mathcal{F}} \gets \psi^{n,\epsilon}[:, S]$ \\
    \Output{Hypersurface set $S$, Independent coordinates $S_*$, Hypersurface embedding $\psi_{\mathcal{F}}$}
    \caption{Learning the residence manifold}
    \label{alg: ResManLearn}
\end{algorithm}

\textbf{Selecting $D+1$ and $\mathcal{F}$}. We summarize the entire pipeline for learning the residence manifold in Algorithm \ref{alg: ResManLearn}. The resulting samples $\{\psi_{\mathcal{F}}(p_i)\}$ are assumed to lie on the surrogate manifold $\widehat{\mathcal{M}}$. Clearly, the validity of this algorithm hinges on an appropriate choice of $D+1$. Ideally, we would like $D+1 = \text{dim}(\mathcal{M}_{\mathcal{F}}) + 1$, so we may estimate the intrinsic dimension of the featurized pointcloud $\mathcal{X}_{\mathcal{F}}$. Alternately, we can consider the $D+1$ which provides the best hypersurface immersion for the given data. This can be achieved as follows: we iterate through the values $2 \leq D+1 \leq m$ and find the $D+1$ for which the difference between the $D$-dimensional volume and the $D+1$ dimensional volume is the largest. This amounts to calculating
\begin{align}
    {\sf HyperSurface}(D+1) := \frac{R_{\zeta}(U'_{S^*}) - \text{R}_{-\zeta}(U_{S})}{|{R}_{\zeta}(U'_{S^*})|}. \label{eq: hypersurface criteria}
\end{align}

The $D+1$ that \emph{maximizes} ${\sf HyperSurface}(D+1)$ should be the dimension which provides the best codimension-1 embedding for the data. For simplicity, here we take $S^* = [D]$ and $S = [D+1]$, motivated by the fact that we are not interested necessarily in independent coordinates but simply by those which form an appropriate hypersurface, i.e minimizing \eqref{eq: hypersurface criteria}. In Figure \ref{fig: HyperSurface}, we plot ${\sf HyperSurface}(D+1)$ for the feature maps given in Section \eqref{subec: group invariant features}.
\begin{figure}[h]
    \centering
    \includegraphics[width=\textwidth]{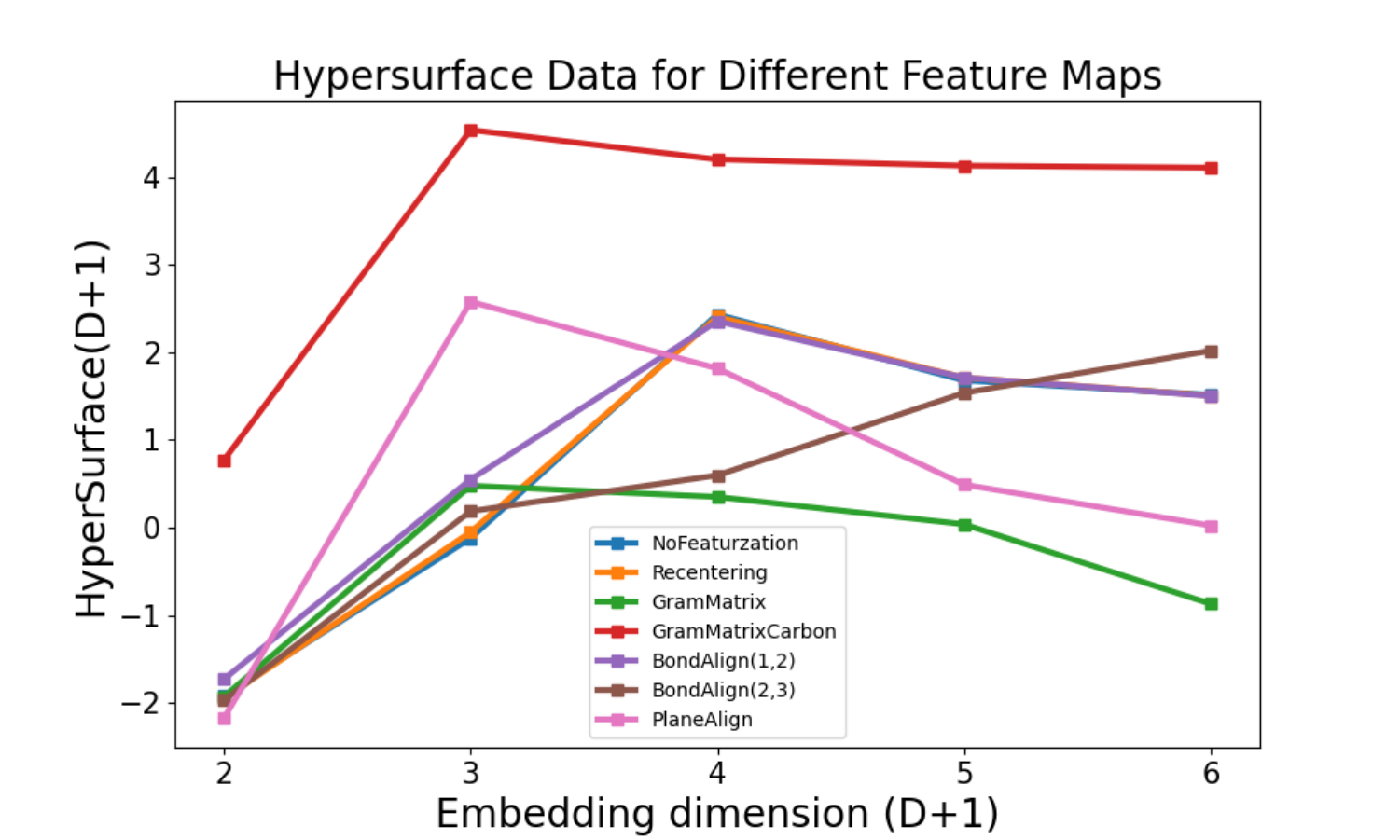}
    \caption{Plotting $\text{HyperSurface}(D+1)$ against $D+1$ for different feature maps. Notably, {\sf GramMatrixCarbon} gives the largest value of HyperSurface($\cdot$) at $D + 1 = 2$ which corresponds to the residence manifold of dimension $D=1$. Moreover {\sf PlaneAlign} seemingly gives the best hypersurface in 3D}.
    \label{fig: HyperSurface}
\end{figure}

\subsection{Data generation and results on selecting diffusion coordinates} To use Algorithm \ref{alg: ResManLearn} we first generated a trajectory from an unbiased Langevin dynamics MD simulation of butane with friction coefficient of $10$ ps$^{-1}$ at 300K. We collected $10^5$ points, each sampled at an interval of $2$ femotseconds. To then determine the best feature map, we subsampled this trajectory at every $100$th point to obtain a point cloud of size $10^3$. This was our raw data $\mathcal{X}$ which we then processed through the different feature maps to obtain the featurized point clouds $\mathcal{X}_{\mathcal{F}}$. The parameter $\epsilon$ was tuned via the K-sum test in \cite{evans2022computing}. On each featurized point cloud we ran Algorithm \ref{alg: ResManLearn}, where the number of eigenvectors was chosen to be $m = 25$ and $\zeta$ was fixed at 0.01 to penalize the frequencies of the eigenfunctions. Moreover, the parameter $\epsilon$ was tuned according to the kernel sum test (see \cite{evans2022computing}). Figure \ref{fig: HyperSurface} reveals that, with the {\sf GramMatrixCarbon} feature map, the best hypersurface is obtained when embedding in two dimensions. Additionally, when using {\sf Recentering} or {\sf NoFeaturization}, we require four dimensions to best embed the data with co-dimension 1. We can also flip this analysis and ask: for a given $D+1$, which feature map $\mathcal{F}$ best embeds the data as a hypersurface? For instance, for both $D+1= 2,3$, the answer is {\sf GramMatrixCarbon}. For $D+1 = 3$, the next best feature map is {\sf PlaneAlign}. A visual inspection of the 3D coordinates why this is the case: {\sf GramMatrixCarbon} embeds the data as a parabolic sheet (bottom row, fourth column, Figure \ref{fig: butane man learning}) while {\sf PlaneAlign} embeds as a circular band in 3D (third column, bottom row in Figure \ref{fig: butane man learning}). However, both can also be embedded in 2D as parabola and a circle respectively (see Figure \ref{fig: 2D featmaps}). For {\sf GramMatrixCarbon}, this is observation consistent with the HyperSurface score, where the feature map attains a high score for both $D+1=2,3$. However, in $D+1=2$, {\sf PlaneAlign} visually embeds as a hypersurface in 2D but \emph{does not} attain a large HyperSurface score. In fact, {\sf PlaneAlign} provides a better embedding than {\sf GramMatrixCarbon} because the {\sf PlaneAlign} embedding is a circle parameterized by the \emph{dihedral angle} while the GramMatrixCarbon embedding is a \emph{parabola} parameterized by $\cos \theta$. This analysis reveals the inherent limitations of HyperSurface--it is only a heuristic and can identify critical dimensions for a feature map. However, it can lead to \emph{false negatives}, as in the case of {\sf PlaneAlign} which attains the lowest score for $D+1=2$, but is clearly visually the ``best" 2D embedding. 

\begin{figure}[h]
    \centering
    \includegraphics[width=0.8\textwidth]{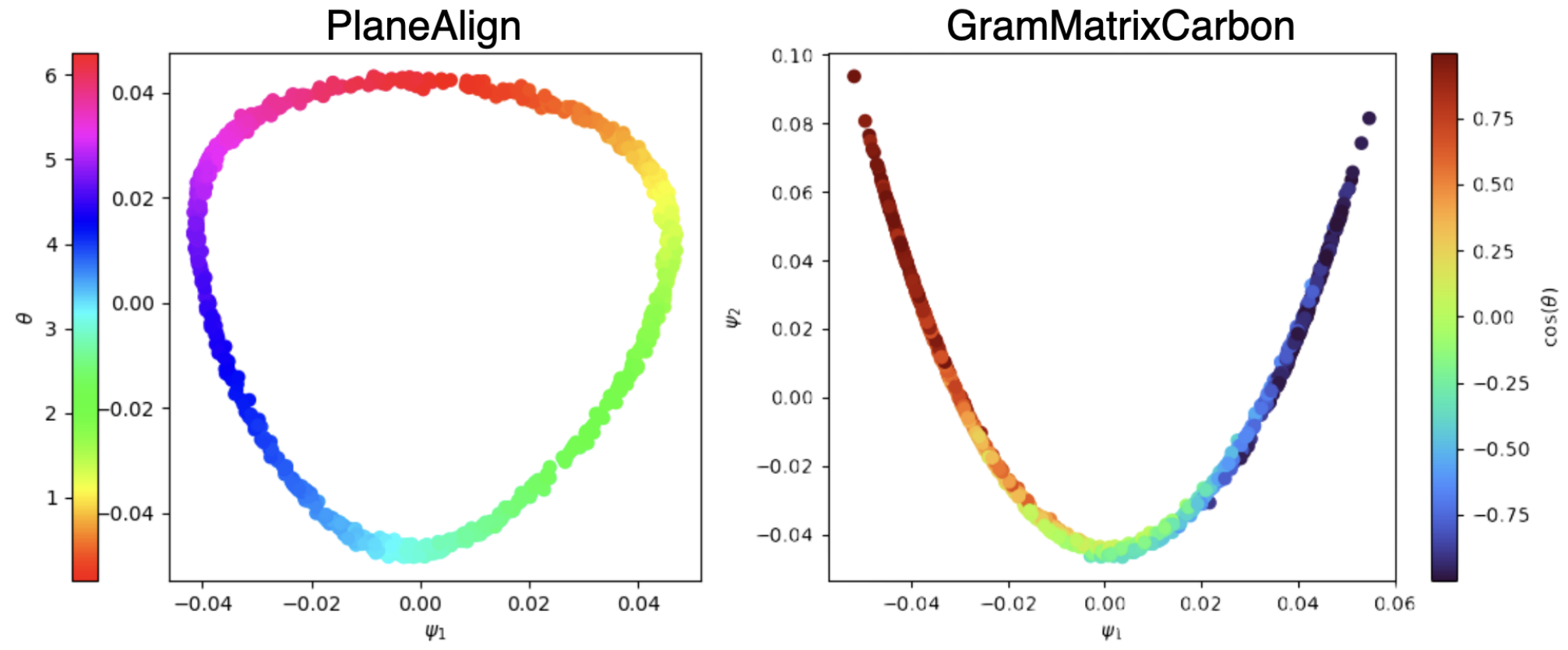}
    \caption{Visualizing the first two selected Laplacian eigenvectors for {\sf PlaneAlign} and {\sf GramMatrixCarbon}. Both embeddings form hypersurfaces in two dimensions.}
    \label{fig: 2D featmaps}
\end{figure}

For $D+1=3$, after {\sf PlaneAlign}, {\sf GramMatrixCarbon}, {\sf BondAlign(1,2)} and {\sf GramMatrix} attain the next top 3 scores respectively. The embeddings into 3D are shown in Figure \ref{fig: butane man learning}. Rather intriguingly, {\sf GramMatrixCarbon} is both embedded as a parabola in two dimensions and as a parabolic sheet in three dimensions. Moreover, the {\sf BondAlign(1,2}) embedding is a paraboloid in 2D (second row, second column) and the {\sf GramMatrix} embedding seems to resemble a wedge product of circles (first row, fourth column). 

\textbf{Importance of hydrogen atoms} 
A highly significant aspect of this analysis is the importance of the hydrogen atoms--the embedding obtained via {\sf GramMatrixCarbon} is parameterized by the $\cos \theta$, which is a somewhat suboptimal collective variable in reproducing transition rates (Table \ref{tab: RatesTheta}). However, in {\sf GramMatrix} when we also include the pairwise inner products with hydrogen atoms, we find that the embedding is smoothly parameterized by $\theta$, where the two disjoint gauche states can be separated. 

\begin{figure}[h]

    \includegraphics[width=\linewidth]{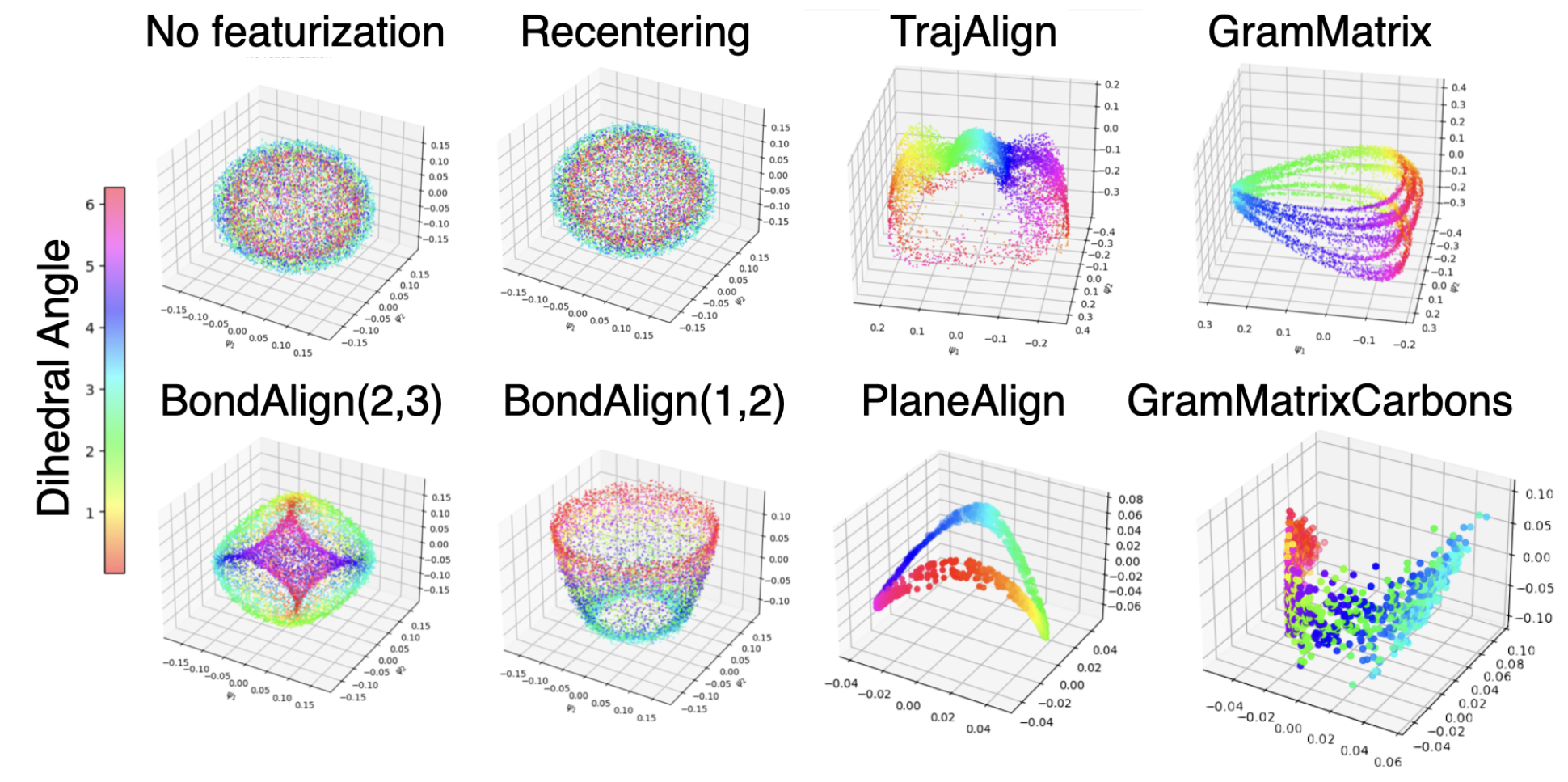}
    \caption{Left to right, top to bottom: The diffusion map embeddings $\{\psi(\mathcal{F}_k(x_i)\}_{x_ii \in \mathcal{X}}$} for eight different choices of feature maps with varying degrees of invariance. The points are coloured according to the dihedral angle. 
    \label{fig: butane man learning}
\end{figure}

\subsection{Diffusion net} After computing the embedding $\psi_{\mathcal{F}}$, we consider the elements of the pointcloud $\{\psi_{\mathcal{F}}(p_i)\}$ as the evaluations of a diffusion net $\Psi_{W_1}: \mathbb{R}^{N} \to \mathbb{R}^{D+1}$ where $\Psi_{W_1} = \Psi^{\mathcal{F}}_{W_1} \circ \mathcal{F}$ as a neural network with parameters $W_1$ (the subscript 1 corresponding to Step 1 of Algorithm \eqref{alg: learning cv}). We train $\Psi^{\mathcal{F}}$ such that (1) $\Psi^{\mathcal{F}}$ matches $\psi_{\mathcal{F}}$ at every point $p_i = \mathcal{F}(x_i)$ and (2) $[\Psi^{\mathcal{F}}_{W_1}]_{j}$, the $j$th coordinate of $\Psi^{\mathcal{F}}$ is a Laplacian eigenfunction with eigenvalue $\lambda_j$. The training objective (1) can be attained by simple mean squared error loss. To achieve training objective (2), we abuse notation and denote $[\Psi^{\mathcal{F}}_{W_1}]_j$ as the evaluations of $[\Psi^{\mathcal{F}}_{W_1}]_j$ on all points $p_i$. Then the vector $L_{n,\epsilon}[\Psi^{\mathcal{F}}_{W_1}]_j$ represents the action of the Laplacian on this function. Objective (2) stipulates that $L^{n,\epsilon}[\Psi^{\mathcal{F}}_{W_1}]_j(p_i) \approx \lambda^{n,\epsilon}_j[\Psi^{\mathcal{F}}_{W_1}]_j(p_i)$. This reasoning leads to the following loss function proposed in \cite{mishne2019diffusion}: 
\begin{align}
    \mathcal{L}_{\rm DNet}(W_1) := \frac{1}{n}\sum_{i=1}^{n}\|\Psi^{\mathcal{F}}_{W_1}(x_i) - \psi^{\mathcal{F}}(p_i)\|_{2}^{2} + \alpha_{\rm DNet}\frac{1}{n}\sum_{i=1}^{n}\sum_{j=1}^{D+1}|L^{n,\epsilon}[\Psi^{\mathcal{F}}_{W_1}]_j(p_i) - \lambda^{n,\epsilon}_{j}[\Psi^{\mathcal{F}}_{W_1}]_j|^2\label{eq: DNet loss}
\end{align}
We can also train a decoder $\Psi_{\rm Dec}$ but since $\mathcal{F}$ is not injective, we can only carry $\Psi(x_i)$ back to $\mathcal{F}(x_i)$. In this case, the decoder is trained with the following reconstruction loss: 
\begin{align}
    L^{\rm Dec}_{\rm DNet}[W^{\rm Dec}_{1},W_{1}] := \frac{1}{n}\sum_{i=1}^{n}\|\Psi^{\rm Dec} \circ \Psi (x_i) - \mathcal{F}(x_i)\|_{2}^{2}. \label{eq: DNet Featurized reconstruction loss}
\end{align} 
Here our focus remains on the encoder $\Psi$. 

\subsection{Undoing spurious topological features.} Among the SE(3)-invariant feature maps, only {\sf BondAlign(2,3)} presents a manifold that cannot be embedded as a hypersurface in either two or threedimensions (Figure \ref{fig: butane man learning}, second row, first column). However, the dihedral angle still seems to smoothly parameterize points on the embedding, which resembles a sphere with handles. We find that this phenomenon persists when applying diffusion maps to data pre-processed via {\sf BondAlign(2,3)} only on the carbon atoms (see top left, Fig. \ref{fig: butane slice}). A further look by slicing the embedding at various $z$-coordinates at this embedding reveals \emph{self-intersections} in three dimensions (see top right, Fig \ref{fig: butane slice}). We additionally embed the data post-processed by the feature map  {\sf BondAlign(2,3)} into four dimensions and visualize its sliced data in three dimensions with the third coordinate being the \emph{fourth} Laplacian eigenfunction $\psi_4$ in the bottom left of Fig \ref{fig: butane slice}.  This reveals that the self-intersections are actually spurious and that the three-dimensional sliced data forms \emph{loops} which can be parameterized using the dihedral angle. This suggests that there is a functional dependence among the first four coordinates $\psi = (\psi_1, \ldots, \psi_4)$. We quantify this functional dependence via the following energy proposed in \parencite{kevrekidis2024thinner}: 
\begin{align}
    \mathcal{E}(\Psi) := \sum_{i < j}|\langle \nabla_{\mathbb{R}^{d_{\mathcal{F}}}}\Psi_i, \nabla_{\mathbb{R}^{d_{\mathcal{F}}}}\Psi_j\rangle|^2. \label{eq: conformal autoencoder energy}
\end{align}
The functional \eqref{eq: conformal autoencoder energy} was suggested as a regularization term in a \emph{conformal autoencoder}. We now compute a map $\Psi: \mathbb{R}^{14 \times 3} \to \mathbb{R}^{D+1}$ as a neural network trained to be a conformal autoencoder whose coordinates are Laplacian eigenfunctions. In particular, the encoder is trained to minimize the following loss function: 
\begin{align}
    L^{\rm Enc}_{\rm LAPCAE}[W_1] := \frac{1}{n}\sum_{i=1}^{n}\sum_{j=1}^{D+1}|L^{n,\epsilon}\Psi^{\mathcal{F}}_{W_1}(p_i) - \lambda^{n,\epsilon}_{j}\Psi^{\mathcal{F}}_{W_1}(p_i)|^2 + \alpha_{\rm LAPCAE}\mathcal{E}(\Psi_{W_1}). \label{eq: LAPCAE encoder loss}
\end{align}
We train the overall autoencoder $\Psi^{\rm Dec} \circ \Psi$ with a weighted combination of $L^{\rm Dec}_{\rm DNet}$ \eqref{eq: DNet Featurized reconstruction loss} and $L^{\rm Enc}_{\rm LAPCAE}$ \eqref{eq: LAPCAE encoder loss}: 
\begin{align}
    L_{\rm LAPCAE}[W^{\rm Dec}_{1}, W_1] := L^{\rm Dec}_{\rm DNet}[W^{\rm Dec}_{1}] + \alpha^{\rm Enc}_{\rm LAPCAE}L^{\rm Enc}_{\rm LAPCAE}[W_1]. \label{eq: LAPCAE loss}
\end{align}
After training $\Psi$ on \eqref{eq: LAPCAE loss} we project the all-atom data to obtain a low-dimensional embedding formed by points $\{\Psi^{\mathcal{F}}(p_i)\}_{i=1}^{n}$ (bottom left, Fig \eqref{fig: butane slice}). Remarkably, the LAPCAE removes the self-intersections, resulting in a cylinder embedded in three dimensions. Moreover, the points are smoothly organized by the dihedral angle. Undoing topological features like spurious knots is an important challenge in data visualization and manifold learning.  Our result is therefore of independent interest to geometric data science in addition to the MD community. 
\begin{figure}[h]
    \centering
    \includegraphics[width=\linewidth]{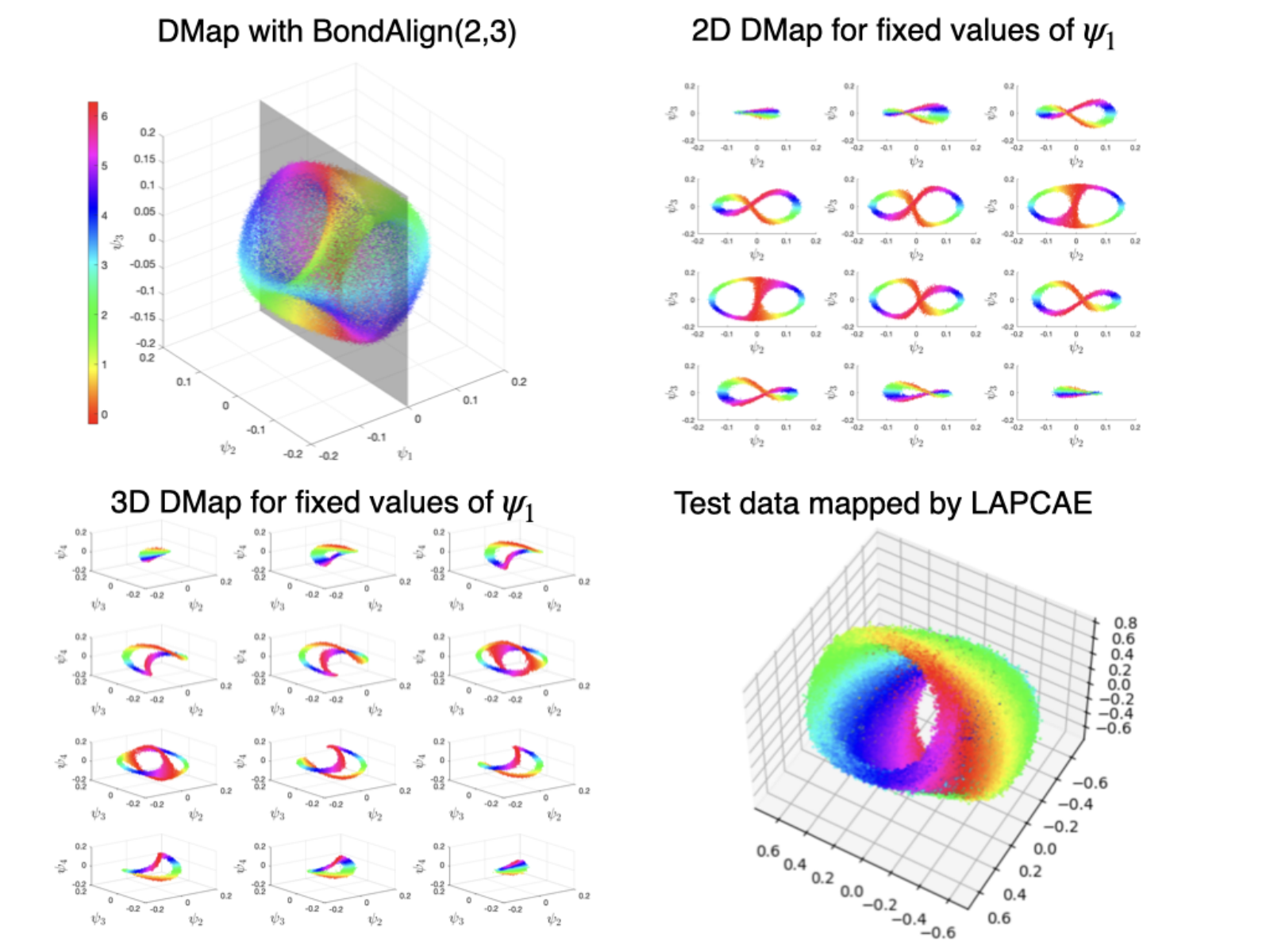}
    \caption{Top left: The diffusion map embedding after featurization by recentering the $C_2-C_3$ bond on the $x$-axis. Top right: Slices of the embedding by planes parallel to the $xy$-plane. Bottom left: Visualizing the sliced data along with the corresponding fourth eigenfunction. Bottom right: The corrected embedding after mapping via a Laplacian conformal autoencoder.}
    \label{fig: butane slice}
\end{figure}

\subsection{Computational details} After acquiring hypersurface coordinates for each feature map, we took every $10$th point of our overall trajectory $\{X_{i\Delta t}\}_{i=1}^{10^5}$ as a training set $\mathcal{X}$ of size $10^4$. The rest of the points were considered part of the testing set. We then postprocessed this point cloud $\mathcal{X}$ using the group invariant feature maps described in Section \eqref{subec: group invariant features} to obtain featurized pointclouds $\mathcal{X}_{\mathcal{F}}$ and computed the diffusion map on the selected hypersurface coordinates. We then trained a diffusion net with encoder $\Psi$ and decoder $\Psi^{\rm Dec}$. Both encoders and decoders were four-layer fully connected networks with either $\tanh$ or $x + \sin^2(x)$ activation. We trained these using an ADAM optimizer with learning rate $0.01$. For training LAPCAE with the feature map {\sf BondAlign} applied to the carbon atoms, the constant $\alpha_{\rm DNet}, \alpha^{\rm Enc}_{\rm LAPCAE}$ were chosen to be $1.0$. We provide additional details regarding neural network training in Appendix \ref{app: Computational details}.  

\section{Learning CVs from the residence manifold}
\label{sec: learning cvs}
\subsection{Learning the confining potential $V_1$} 
We assume now that the diffusion net $\Psi$ has been trained and that the data $\widehat{\mathcal{X}} = \{y_i\}$ lies on the surrogate manifold $\widehat{\mathcal{M}}$ where $y_i = \Psi(x_i)$. Now we aim to learn a surrogate potential $\widehat{\Phi}$ which vanishes exactly on $\widehat{\mathcal{M}}$. We are aided by the fact that $\widehat{\mathcal{M}}$ can be presented as a hypersurface. In this case, a loss for learning $\widehat{\Phi}$ may be derived using the eikonal equation. In particular, $\widehat{\mathcal{M}}$ may be exhibited (possibly locally) as the boundary of some open set in $\mathbb{R}^{D+1}$. In this case, then the signed distance function (SDF) $\widehat{\Phi}_{\rm SDF}$ to this open set satisfies $\|\nabla \widehat{\Phi}_{\rm SDF}\|^2 =1$. We propose that this SDF be a representation of the desired surrogate potential $\widehat{\Phi}$. This is because $\nabla \widehat{\Phi}_{\rm SDF}(y) = -N(y)$ where $N(y)$ is the outward pointing normal vector to $\widehat{\mathcal{M}}$ at $y$, i.e the condition we want the surrogate potential $\widehat{\Phi}$ to satisfy. A loss function for $\Phi_{W_2}$ is given by:
\begin{align}
    L_{\rm eikonal}[W_2] :=\frac{1}{n}\sum_{i=1}^{n}(\|\nabla \Phi_{W_2}(y_i)\|_{2}^{2}-1)^2 + \alpha_{\rm zero} \|\Phi_{W_2}(y_i)\|_{2}^{2}. \label{eq: eikonal} 
\end{align}
As an alternate approach, given the point cloud $\widehat{\mathcal{X}}$, the normals at each point $N(y_i)$ may be estimated directly without needing a neural network by taking a neighborhood of $y_i$ and fitting a tangent plane to this neighborhood. The estimated normal $n_i$ at $y_i$ is then the normal vector to this tangent plane. This method is summarized in algorithm \eqref{alg: pointcloud normals}. 

\begin{algorithm}[H]
\label{alg: pointcloud normals}
\DontPrintSemicolon
\KwIn{Point cloud $\widehat{\mathcal{X}} = \{y_i\}_{i=1}^N \subseteq \mathbb{R}^{D+1}$, number of neighbors $k$}
\KwOut{Estimated normals $\{n_i\}_{i=1}^N$}

Build a spatial index (e.g., KD-tree) for fast nearest-neighbor queries\;

\ForEach{$y_i \in \mathcal{P}$}{
    Find the $k$ nearest neighbors of $y_i$: $\{y_j\}_{j=1}^k \leftarrow \texttt{knn}(y_i, k)$\;

    Compute the centroid of neighbors: 
    $$\bar{y} \leftarrow \frac{1}{k} \sum_{j=1}^k y_j$$

    Center the neighbors:
    $$Y \leftarrow [y_j - \bar{y}]_{j=1}^k$$

    Compute the covariance matrix:
    $$C \leftarrow \frac{1}{k} Y^\top Y$$

    Perform eigen-decomposition or SVD of $C$ to get eigenvalues $\lambda_1 \leq \cdots \leq \lambda_d$ and eigenvectors $v_1, \ldots, v_d$\;

    Assign the normal vector corresponding to the eigenvector with the smallest eigenvalue:
    $$n_i \leftarrow v_1$$ 
}

\Return{$\{n_i\}_{i=1}^N$}
\caption{Estimate Normals via k-NN and Tangent Plane Fitting}
\end{algorithm}

After estimating the point cloud normals $\{n_i\} \subseteq \mathbb{R}^{D+1}$ we use these to supplement $L_{\rm eikonal}$ with an additional regularization term to aid learning the surrogate potential $\Phi$. Thus, the final loss for learning $\Phi$ is given by: 
\begin{align}
    L_{\rm potential}[W_2] := L_{\rm eikonal}[W_2] + \frac{\alpha_{\rm normals}}{n}\sum_{i=1}^{n}\|\nabla \Phi_{W_2}(y_i) - n_i\|_{2}^{2}. \label{eq: full loss for learning surrogate potential}
\end{align}
Here $\alpha_{\rm normals} > 0$ is a hyperparameter. In Figure \ref{fig: gradients of surrogate potential} we present $\nabla \widehat{V}$ learned for the {\sf BondAlign(1,2)}, {\sf BondAlign(2,3)}, and {\sf PlaneAlign} feature maps, confirming that the gradients of $\widehat{\Phi}$ are normal to the learned manifold $\widehat{\mathcal{M}}$. 
\begin{figure}[h]
    \centering
    \includegraphics[width=\textwidth]{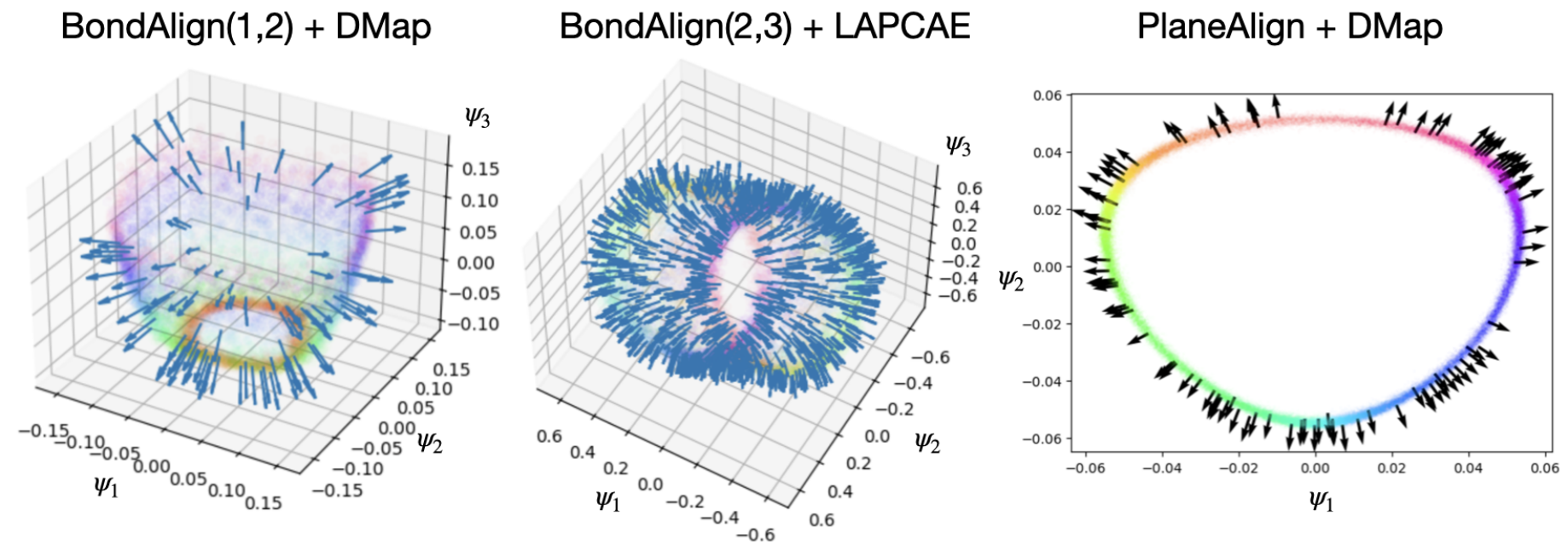}
    \caption{Gradients of potential $\nabla \widehat{\Phi}$ learned for {\sf BondAlign(1,2)}, {\sf BondAlign(2,3)} and {\sf PlaneAlign}. Here $\widehat{\Phi}$ is learned by minimizing $L_{\sf potential}$. }
    \label{fig: gradients of surrogate potential}
\end{figure}

\subsection{Learning collective variable $\xi$} After acquiring the surrogate potential $\widehat{\Phi}$ that vanishes on the surrogate manifold $\widehat{M}$, we finally train the collective variable $\widehat{\xi}_{W_3}$, given by another neural network with parameters $W_3$ trained to satisfy the orthogonality criterion with respect to $\widehat{\Phi}$. We will proceed to learn CVs following Algorithm 1 only for the two most promising featurizations, {\sf BondAlign(2,3)} and {\sf PlaneAlign}. To do so, we consider the surrogate potential $\widehat{\Phi}$ obtained by minimizing $L_{\rm potential}$ in \eqref{eq: full loss for learning surrogate potential}. In this case $\widehat{\Phi} \approx \widehat{\Phi}_{\rm SDF}$.  However, 
\begin{equation}\label{eq:HessPhiGradPhi}H\widehat{\Phi}_{\rm SDF}\nabla \widehat{\Phi}_{\rm SDF} = 0,
\end{equation}
where $H\widehat{\Phi}_{\rm SDF}$ is the Hessian of the signed distance function. Since $\nabla\widehat{\Phi}$ is the normal vector to $\mathcal{M}$, this implies that the rows of $H\widehat{\Phi}$ given by $\nabla \partial_{i}\widehat{\Phi}$ satisfy the orthogonality condition \eqref{eq: orthogonality condition}. Therefore, $\partial_{i}\widehat{\Phi}$ may be directly used as a collective variable; alternately, a separate encoder $\widehat{\xi}$ may be trained such that the \emph{normalized gradients} of $\widehat{\xi}$ given by $\nabla \widehat{\xi}/\|\nabla \widehat{\xi}\|$ are aligned with the normalized $\nabla \partial_{i}\widehat{\Phi}$. One way to avoid dividing by $\|\nabla \widehat{\xi}\|$ is to observe that the Cauchy-Schwarz inequality $|\langle\nabla \widehat{\xi}, \nabla \partial_{i}\widehat{ \Phi}\rangle|^2 \leq \|\nabla \widehat{\xi}\|^2\|\nabla { \partial_{i}}\widehat{\Phi}\|^2$ is an equality if and only if $\nabla \widehat{\xi}$ and  $ \nabla \partial_{i} \widehat{\Phi}$ are parallel. Therefore, a loss function for learning $\widehat{\xi}$ is given by: 
\begin{align}
    L_{\rm alignment}[W_3] = \sum_{i=1}^{n}\|y_i - \xi_{\rm Dec}\circ \xi(y_i)\|^{2}_{2} + \alpha_{\rm OC}(|\langle\nabla \widehat{\xi}, \nabla \partial_{i}\widehat{\Phi}\rangle|^2 - \|\nabla\widehat{\xi}\|^2\| \nabla \partial_{i}\widehat{ \Phi}\|^2)^2. \label{eq: L alignment loss}
\end{align}
In Figure \ref{fig: CV grads}, we visualize the gradients of $\widehat{\xi}$ for the feature maps {\sf BondAlign(2,3)} and {\sf PlaneAlign} which are orthogonal to the surrogate potential $\widehat{\Phi}$ on the learned manifold $\widehat{\mathcal{M}}$. For {\sf BondAlign(2,3)}, we use the loss $L_{\rm alignment}$ \eqref{eq: L alignment loss} with $\partial_1 \widehat{\Phi}$, where $\widehat{\Phi}$ is learned by minimizing $L_{\rm eikonal}$. For {\sf PlaneAlign}, we find that $\widehat{\xi}(x) = {\tt arctan2}(x_2,x_1)$  ({\tt arctan2} returns angle from $-\pi$ to $\pi$) is orthogonal to $\nabla \widehat{\Phi}$ and therefore satisfies \eqref{eq: orthogonality condition}. Thus, given an interpretable enough visualization, CVs may be devised by inspection. Interestingly, the CV learned via PlaneAlign correlates significantly with the dihedral angle.  
\begin{figure}
    \centering
    \includegraphics[width=\textwidth]{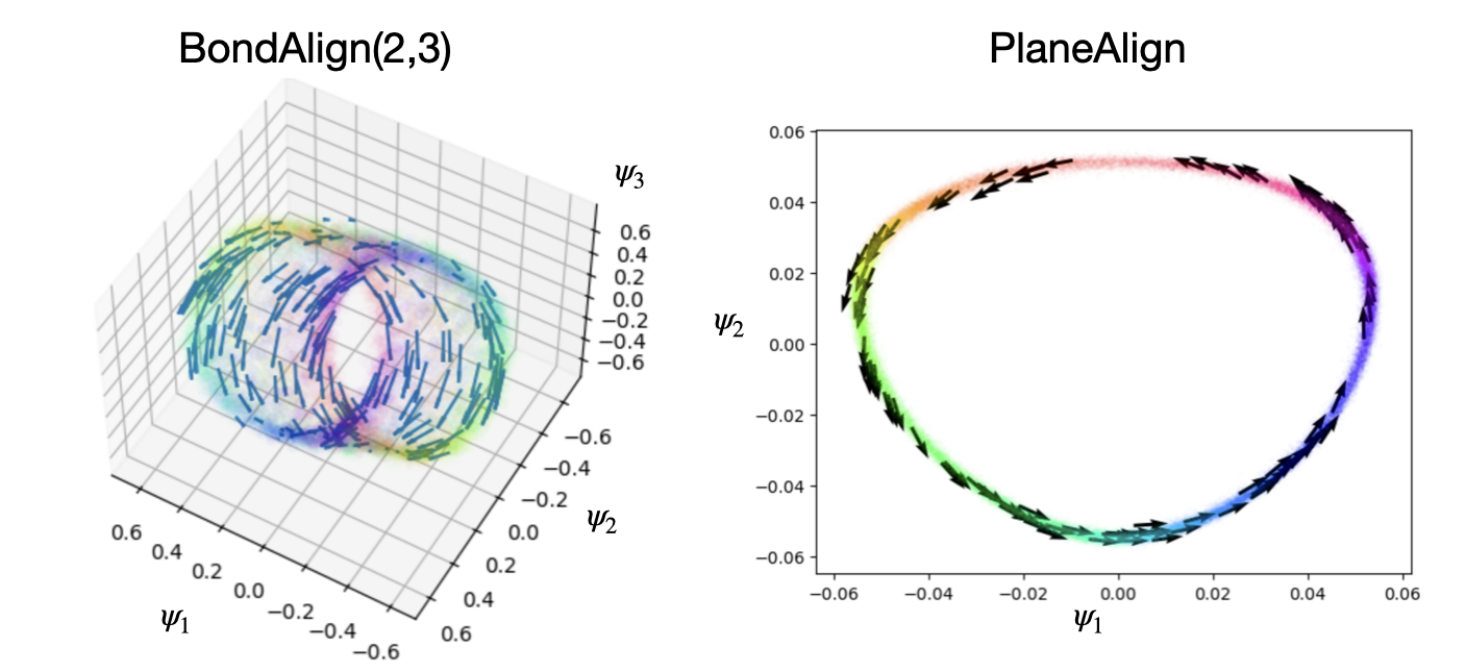}
    \caption{Gradients of CVs for {\sf BondAlign(2,3)} and {\sf PlaneAlign}.}
    \label{fig: CV grads}
\end{figure}
\begin{figure}
    \centering
    \includegraphics[width=0.7\textwidth]{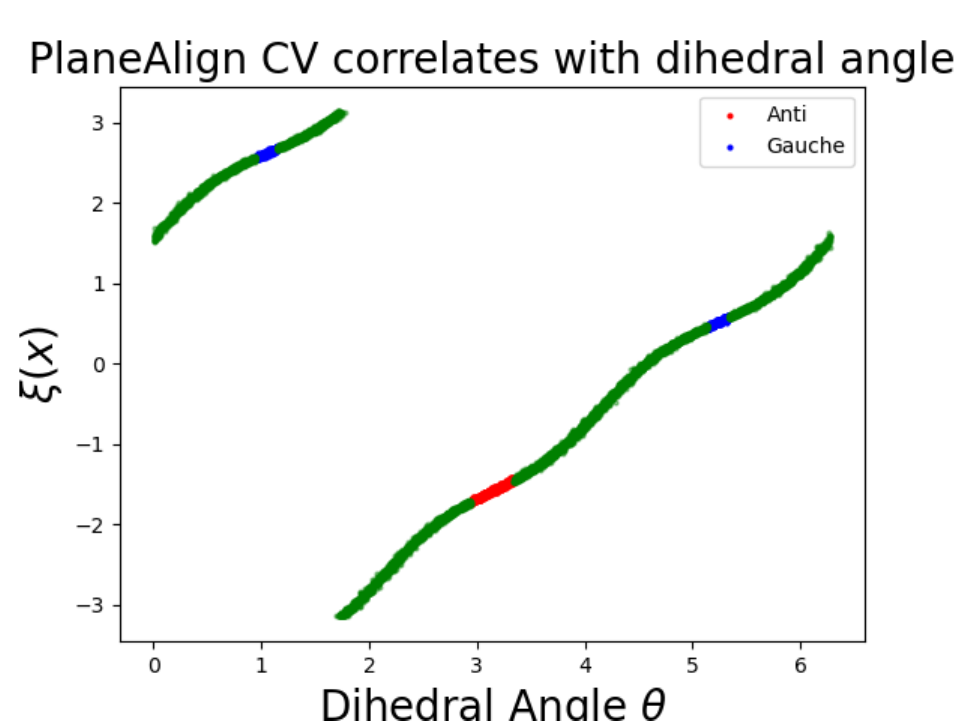}
    \caption{We visualize $(\theta(x_i),\xi(x_i))$ for $x_i \in \mathcal{X}$, the training data used for learning $\xi$. Note that $\xi$ is a nonlinear function of $\theta$--the single discontinuity is due to the use of the {\tt arctan2} function which requires a branch cut at $\theta = 2\pi$.}
    \label{fig: dihedral correlates CV}
\end{figure}
\subsection{Computational details} For both {\sf PlaneAlign} and {\sf BondAlign(2,3)}, $\widehat{\Phi}$ and $\widehat{\xi}$ were taken to be four-layer fully connected networks with $\arctan$ and $x + \sin^2(x)$ activations respectively. The use of such nonstandard activations was motivated by the spherical/toroidal structure of the surrogate manifolds and the modeling of $\widehat{\Phi}$ as a \emph{signed} distance. The exact architectures and training details have been provided in Table \eqref{tab: NN_details} in Appendix \ref{app: Computational details}.

\subsection{Reproducing transition rates}
\label{subsec: reproducing transition rates}
Using CVs $\xi := \widehat{\xi} \circ \Psi$ learned from Algorithm \ref{alg: learning cv} we now reproduce the transition rate $\nu_{AB}$ for the anti-gauche transition from the set 
$$A = \{|\theta-\pi| \leq 0.2\}\quad {\rm to}\quad B = \{|\theta-\pi/3| \leq 0.1\} \sqcup \{|\theta-5\pi/3| \leq 0.1\}.$$ 
We will do so by computing $\tilde{\nu}_{AB}$ via \eqref{eq: transition rate in CVs}. This necessitates computing the free energy $f$ and the diffusion tensor $M(z)$. We compute $f$ via \emph{well-tempered metadynamics} \cite{barducci2008well} and $M$ via the spring force method \cite{maragliano2006string} respectively. 

Our results are presented in Table \ref{tab: transition rates}, where we organize the machine-learned collective variables (ML CVs) by the underlying feature map. We find that although the {\sf BondAlign(2,3)} feature map results in a coherent manifold, the CVs learned using such a manifold do not separate the metastable states $A$ and $B$ (see Figure \ref{fig: bondalign 23 statesep}). However, the CV learned using {\sf PlaneAlign} corresponds nearly exactly to the dihedral angle, and therefore separates metastable states. In CV space these are identified with the following intervals: 
\begin{align}
    A = [-1.75,-1.41], \: B = [0.43, 0.60] \: \sqcup \: [2.54, 2.71].
\end{align}
Using these and applying formula \eqref{eq: transition rate in CVs} led to a transition rate with less than ten percent relative error. The free energy and diffusion tensors used for computing the transition rates are provided in Figure \eqref{fig: PlaneAlign}. For completeness, we visualize the free energy landscape and diffusion tensors for the two-dimensional CV learned using {\sf BondAlign(2,3)} in Figure \ref{fig: BondAlign23} even though we do not compute the anti-gauche transition rate using this CV. Notably the diffusion tensor for the CV learned via PlaneAlign is different than the dihedral angle.
% Please add the following required packages to your document preamble:
% \usepackage{booktabs}
\begin{table}[h!]
\begin{center}
\begin{tabular}{@{}ll@{}}
\toprule
Collective variable          & Transition rate (ps$^{-1}$)     \\ \midrule
Reference                    & 1.13 $\pm$ 0.08 $\times 10^{-2}$ \\
Dihedral angle $\theta$      & 1.41 $\times 10^{-2}$               \\
$(\sin \theta, \cos \theta)$ & $1.19 \times 10^{-2}$            \\
$\cos \theta$                & $1.52 \times 10^{-2}$ \\
$\xi$, {\sf BondAlign(2,3)}               & Metastable states not separated             \\
$\xi$, {\sf PlaneAlign}                   & $1.25 \times 10^{-2}$              \\ \bottomrule \\
\end{tabular}

\caption{Transition rates for the anti-gauche transition in both classical and machine-learned collective variables. We organize the ML CVs $\xi$ according to the relevant feature map in use.}
\label{tab: transition rates}

\end{center}
\end{table}

\section{Discussion}

In this work, we have made several %key 
contributions to the theory and application of quantitative coarse-graining. 
%In Section \eqref{sec: background} we reviewed the fundamental theory behind quantitative coarse-graining and demonstrated how the orthogonality condition (OC) naturally emerges as a mechanism for reducing error estimates. Furthermore, 
We established the equivalence between the orthogonality condition  \eqref{eq: orthogonality condition}, which naturally emerges as a mechanism for reducing the modeling error measured via the relative entropy, and the Projected Orthogonality Condition \eqref{eq: orthogonality condition 2} (Proposition \eqref{prop: oc poc equivalence}), which ensures that the pathwise distance remains small. Through our study of butane, we also showed that Assumption \ref{ass: xi} demanding that $D\xi D\xi^\top \succcurlyeq \delta_{\xi}^2I_d$ can be relaxed to accommodate rank-deficient but non-degenerate collective variables (CVs), thereby broadening the applicability of these conditions. 

We introduced a framework summarized in Algorithm \ref{alg: learning cv} inspired by the orthogonality condition  \eqref{eq: orthogonality condition}, to construct CVs that flow along the surrogate manifold $\mathcal{M}$. A crucial component of our approach is the residence manifold learning algorithm (Algorithm \ref{alg: ResManLearn}), which carefully integrates several tools from manifold learning to construct a hypersurface representation. This structured approach facilitates an efficient computation of the normal vector, allowing for the direct construction of a CV whose gradients lie in the tangent space. 

{\bf The significance of the feature map.} The key emergent theme from this approach is the extreme importance of choosing the correct feature map for post-processing the data. The existence of several plausible but structurally different surrogate manifolds suggests that a more intricate design of feature maps beyond standard techniques such as RMSD alignment or pairwise distances can provide significant accelerations in downstream tasks such as transition rate estimation conducted in Section \eqref{subsec: reproducing transition rates}. In particular, our hand-crafted {\sf PlaneAlign} feature map causes the diffusion net embedded to be parameterized by the dihedral angle $\theta$. This parameterization is duly recovered by the collective variable $\xi$ leading to a successful reproduction of the anti-gauche transition rate (see Table \eqref{tab: transition rates}). Furthermore, we provided a heuristic function {\sf HyperSurface}, \eqref{eq: hypersurface criteria}, which combines Laplacian coordinates to embed diffusion map manifolds as hypersurfaces irrespective of the choice of feature map (see Figures \eqref{fig: 2D featmaps}, and \eqref{fig: butane man learning}). However, {\sf HyperSurface} remains purely a heuristic because of (1) false negatives (for instance, {\sf PlaneAlign} attains a low 2D score but is a good 2D embedding) and (2) self-intersections despite functional independence. We resolved the second issue by simultaneously promoting functional independence in Laplacian eigenfunctions through a neural network loss \eqref{eq: LAPCAE loss} leading to LAPCAE, enabling a well-structured embedding of the surrogate manifold \eqref{fig: butane slice}.

{\bf Limitations.} Despite the above advances, several limitations remain. Our current implementation has only been applied to the butane molecule, a very simple MD system. Moreover, in our framework, the orthogonality condition \eqref{eq: orthogonality condition} is imposed on the surrogate manifold rather than directly in the all-atom space \( \mathbb{R}^{N} \).  While it is theoretically possible to define an orthogonality condition in \( \mathbb{R}^{N} \) via a parameterization \( \xi = \widehat{\xi} \circ \Psi \) and a function \( \Phi = \widehat{\Phi} \circ \Psi \), we found this approach impractical. The presence of numerous ways to satisfy the constraint \( \nabla \xi \cdot \nabla \Phi = 0 \) in \( \mathbb{R}^{N} \) introduces excessive local minima in the loss function, making the learning of \( \xi \) less robust.

{\bf Can the surrogate manifold be a CV?} Given the ability of diffusion maps to identify the slow manifold of the overdamped Langevin dynamics \eqref{eq: OLD}, the learned manifold coordinates \( \Psi \) could, in principle, serve as CVs. Similar ideas have been implemented in \cite{rydzewski2024spectral} or \cite{smith2018multi} where CVs are directly designed to optimize spectral criteria. Here we use eigenfunctions to reduce the problem to a surrogate space $\mathbb{R}^{D+1}$ and then apply the relevant CV criteria in this surrogate space. We emphasize that enforcing the orthogonality condition \eqref{eq: orthogonality condition} remains beneficial despite the complexity reduction afforded by the map $\Psi$. In particular, the manifold coordinates \( \Psi \) may still be high-dimensional depending on the choice of embedding dimension \( D+1 \), making direct usage impractical. Moreover, the orthogonality condition \eqref{eq: orthogonality condition} provides an implicit way to encode functional dependence on the intrinsic parameterization of the surrogate manifold without requiring a coordinate chart in \( \mathbb{R}^{D+1} \). Specifically, locally the surrogate manifold has $D+1$ degrees of freedom and can be represented as the graph of a function. However, recovering such a function for every point is highly cumbersome and the orthogonality condition \eqref{eq: orthogonality condition} provides a way to implicitly define $\xi$ on these intrinsic degrees of freedom. 

{\bf Future applications and implementational improvements.} The above insights and limitations open numerous directions of future work. One primary direction is to apply our methodology towards studying the conformational dynamics of synthetic peptide chains such as chignolin \cite{harada2011exploring} or AIB9, and to the resolution of more advanced challenges such as cryptic pocket discovery or \cite{wang2022data} nucleation \cite{wang2024local}. Given the challenges of defining the orthogonality condition directly in \( \mathbb{R}^{N} \), future work could explore alternative formulations that mitigate the issue of excessive local minima in the loss function. This may involve developing regularization techniques or optimizing over constrained subspaces that better capture physically meaningful CVs.

{\bf Open theoretical questions.} In addition to applications in molecular dynamics, here we also underline some open questions in quantitative coarse-graining and manifold learning prompted by our approach. With regards to quantitative coarse-graining, in Section \eqref{sec: effective dynamics analysis}, we have demonstrated empirically that the diffusion tensor $M(z)$ need not be full rank to reproduce transition rates. This assumption is ubiquitous in most recent works on quantitative coarse-graining, thus leaving an intriguing avenue to extend the existing theory toward error estimates for degenerate diffusions. Moreover, with regards to manifold learning, we make use of group invariant diffusion maps via a group invariant feature map in Algorithm \ref{alg: ResManLearn}, thus furthering the project started in \cite{rosen2023g, hoyos2023diffusion} of encoding group structure into spectral embeddings. These works also use a group invariant kernel to perform diffusion maps but do so by imparting group invariance via averaging against the Haar measure of the underlying Lie group. This necessitates generating the orbit of every element in the input point cloud. Here we have presented an alternative approach that does not require these orbits by using a feature map before applying the kernel. A comparison and possible integration of both approaches is therefore interesting and may hold potential for accelerating the accuracy and expressibility of diffusion maps for MD data.  

\section{Conclusion}

Our work advances the theory and practice of quantitative coarse-graining by elucidating the role of the orthogonality condition in reducing modeling error and preserving transition rates in overdamped Langevin dynamics. We introduced a learning framework that constructs collective variables by imposing the orthogonality condition on a surrogate space defined via manifold-based representations. We demonstrated that group invariant featurization is critical for defining such representations and therefore heavily affects coarse-graining performance. Through our butane case study, we demonstrated the role of analytical conditions from quantitative coarse-graining and provided practical heuristics, such as {\sf HyperSurface} and LAPCAE, to address manifold learning challenges in our proposed algorithm. While our approach currently remains limited to simple systems and surrogate spaces, it opens up rich avenues for applying these techniques to more complex biomolecular dynamics and exploring fundamental questions in coarse-graining and manifold learning.

 \section{Acknowledgements}
 This work was partially supported by AFOSR MURI grant FA9550-20-1-0397 and by 
  NSF REU grant DMS-2149913. The first author would like to thank Akash Aranganathan and Dr. Eric Beyerle for helpful discussions regarding MD simulations in the high-friction regime.

 \appendix
\setcounter{equation}{0}
\renewcommand{\theequation}{\Alph{section}-\arabic{equation}}
    \setcounter{lemma}{0}
    \renewcommand{\thelemma}{\Alph{section}\arabic{lemma}}

\section*{Appendix}

\section{Relation to full Langevin dynamics and reaction rate scaling} 
\label{app: A}

Typically, MD simulations track not just the positions $X_t$ but also the velocities of atoms $v_t$ (the lower case notation has been purposely chosen to not confuse with the potential $V$) under the \emph{Langevin dynamics} given by
\begin{align}
    dX_t &= v_t\,dt, \label{eq: full Langevin 1} \\
    m\,dv_t &= -\gamma m v_t\,dt {-} \nabla V(X_t)\,dt + \sqrt{{ 2}k_{B}T\gamma}\:m^{1/2}dW_t. \label{eq: full Langevin 2}
\end{align}
Here $\gamma$ is the friction coefficient, $T$ is the temperature, $m$ is the diagonal matrix of masses of individual particles, and $k_B$ is Boltzmann's constant used to make terms dimensionless. In the regime $\gamma \gg 1$, the Langevin dynamics \eqref{eq: full Langevin 1}, \eqref{eq: full Langevin 2} may be approximated by the following anisotropic overdamped Langevin dynamics with $\beta^{-1} = k_BT $: 
\begin{align}
    dX_t &= {-} m^{-1}\nabla V(X_t)\gamma^{-1}\,dt + \sqrt{2\beta^{-1} \gamma^{-1}}m^{-1/2}dW_t. 
\end{align}
In this case, $\mu \propto \exp(-\beta V)$ is still the invariant measure. To remove the implicit effect of friction, we must rescale time as $\tau = \gamma^{-1}t$. This results in a time-rescaled overdamped Langevin dynamics given by
\begin{align}
    dX_{\tau} &= {-}m^{-1}\nabla V(X_{\tau})\,d\tau + \sqrt{{ 2}\beta^{-1}}m^{-1/2}dW_{\tau}. \label{eq: rescaled time OLD}
\end{align}
Under the overdamped approximation, the dynamics are anisotropic due to the involvement of the mass matrix $m$. In this case, the diffusion tensor must feature this mass rescaling \cite[Eq. 5]{cameron2013estimation} and thus be given by: 
\begin{align}
    M(z) := \mathbb{E}_{\mu}[(D\xi) m^{-1}(D\xi)^{\top} \mid \xi = z].
\end{align}
With this diffusion tensor, the rate formula \eqref{eq: transition rate in CVs} applies for computing the transition rate $\nu'_{AB}$. However, this transition rate is for transitions occurring in the timescale $\tau$; thus to recover the timescale $t$, we must rescale by the relevant factor of $\gamma$. This is pertinent when computing transition rates. For example, as a heuristic, the transition rate $\nu'_{AB}$ computed for the dynamics \eqref{eq: rescaled time OLD} would be under the units $1/\tau = \gamma/t \iff t^{-1} = \gamma^{-1}\tau^{-1}$. Therefore, the transition rate $\nu_{AB}$ in the units $t^{-1}$ (for instance, in ps$^{-1}$) may be recovered as
\begin{align}
\nu_{AB} \approx \gamma^{-1}\nu'_{AB}.    
\end{align}
However, the reaction rate is not the only place where the friction coefficient must be accounted for, because the diffusion tensor $M$ is calculated in practice also with a trajectory $X_t$ indexed by time. This is due to the following reason: given $z$ the string method \cite{maragliano2006string} for computing the diffusion matrix $M(z)$ modifies the potential $V$ with a harmonic restraint with stiffness $\kappa$:
\begin{align}
    V^{z}(x) := V(x) + \frac{\kappa}{2}\|z-\xi(x)\|_{2}^{2}. \label{eq: modified potential for diffusion tensor}
\end{align}
With the potential $V^z$, the process $X^z_{t}$ is simulated according to the Langevin dynamics \eqref{eq: full Langevin 1}-\eqref{eq: full Langevin 2}. Then the diffusion tensor is given by:
\begin{align}
    [M(z)]_{jk} &:= \lim_{T \to \infty}\frac{1}{T}\int_{0}^{T}\sum_{i=1}^{3N}m^{-1}_{i}\frac{\partial\xi_{j}(X^{z}_{t})}{\partial x_i}\frac{\partial\xi_{k}(X^{z}_{t})}{\partial x_i}\,dt \\
    &= \lim_{T \to \infty}\frac{1}{T}\int_{0}^{T}\sum_{i=1}^{3N}m^{-1}_{i}\frac{\partial\xi_{j}(X^{z}_{\tau})}{\partial x_i}\frac{\partial\xi_{k}(X^{z}_{\tau})}{\partial x_i}\,\gamma\,d\tau \\
    &\approx \frac{\gamma}{n}\sum_{\ell=1}^{n}\sum_{i=1}^{3N}m^{-1}_{i}\frac{\partial\xi_{j}(X^{z}_{\ell \Delta \tau})}{\partial x_i}\frac{\partial\xi_{k}(X^{z}_{\ell \Delta \tau})}{\partial x_i} \\
    &= \frac{\gamma}{n}\sum_{\ell=1}^{n}\sum_{i=1}^{3N}m^{-1}_{i}\frac{\partial\xi_{j}(X^{z}_{\ell \gamma^{-1}\Delta t})}{\partial x_i}\frac{\partial\xi_{k}(X^{z}_{\ell \gamma^{-1}\Delta t})}{\partial x_i}
\end{align}
The third line following the second is the essence of the string method. Here $\Delta t$ is the time step used for simulating Langevin dynamics--the above calculation suggests that it needs to be additionally smaller to scale with the friction coefficient. 

\section{Relevant differential geometry}
\label{app: B}
Here we introduce some terms from Riemannian geometry used in Section \ref{sec: resmanlearn}. 

\begin{definition}
\label{def: groups}
    We define the Euclidean and standard Euclidean groups in three dimensions as follows: 
\begin{align}
    E(3) &:= \left\{g: \mathbb{R}^{3} \to \mathbb{R}^{3} \mid g(x) = Ox + b, O^{\top}O = I, b \in \mathbb{R}^{3} \right\}, \label{eq: E(3)}\\
    SE(3) &:= \left\{g: \mathbb{R}^{3} \to \mathbb{R}^{3} \mid g(x) = Rx + b, R^{\top}R = I, \textsf{det}(R) = 1, b \in \mathbb{R}^{3} \right\}. \label{eq: SE(3)}
\end{align}
\end{definition}

Thus, E(3) comprises all translations, rotations, and reflections of $\mathbb{R}^{3}$ while SE(3) comprises all rotations and translations. 

\begin{definition}
\label{def: differential}
Let $F : \mathcal{M} \to \mathcal{N}$ be a smooth map between smooth manifolds. The differential (or pushforward) of $F$ at a point $p \in \mathcal{M}$ is the linear map  
\begin{align}
dF_p : T_p \mathcal{M} \to T_{F(p)} \mathcal{N}
\end{align}  
defined by its action on tangent vectors $X \in T_p \mathcal{M}$, satisfying  
\begin{align}
(dF_p X)(f) = X(f \circ F) \quad \text{for all } f \in C^\infty(\mathcal{N}).
\end{align}    
\end{definition}

\begin{definition}
\label{def: immersion}
    A smooth map $F : \mathcal{M} \to \mathcal{N}$ is an immersion at a point $p \in \mathcal{M}$ if its differential is injective:  
\begin{align}
\ker(dF_p) = \{0\} \quad \Leftrightarrow \quad dF_p \text{ is injective}.
\end{align}  
If this holds for all $p \in \mathcal{M}$, then $F$ is called an immersion. Additionally, if $F$ is a homeomorphism onto its image, then it is an embedding and $\mathcal{M}$ is said to be \emph{embedded} in $\mathcal{N}$.  
\end{definition}

\begin{definition}
\label{def: hypersurface}
Let $\mathcal{N}^n$ be an $n$-dimensional smooth manifold. A hypersurface in $\mathcal{N}$ is a smooth manifold $\mathcal{M}^{n-1}$ of dimension $n-1$, together with an embedding  
\begin{align}
F : \mathcal{M}^{n-1} \hookrightarrow \mathcal{N}^n
\end{align}  
such that the image $F(\mathcal{M}) \subset \mathcal{N}$ has codimension 1, meaning:  
\begin{align}
\dim \mathcal{N} - \dim F(\mathcal{M}) = 1.
\end{align}    
Equivalently, a hypersurface can locally be described as the regular level set of a smooth function, i.e there exists an open set $V \subset \mathcal{N}$ and a smooth function $\varphi: V \to \mathbb{R}$ such that  
\begin{align}
\mathcal{M} \cap V= \{ p \in \mathcal{N} : \varphi(p) = 0 \}, \quad \text{with } d\varphi_p \neq 0 \text{ on } \mathcal{M} \cap V.
\end{align}
\end{definition}

\begin{definition}
\label{def: normal vector}
Let $(\mathcal{N}^n, g), (\mathcal{M}^{n-1}, h)$ be a Riemannian manifolds, and let  
\begin{align}
F : \mathcal{M}^{n-1} \hookrightarrow \mathcal{N}^n
\end{align}  
be a smooth embedded hypersurface. A normal vector at a point $p \in \mathcal{M}$ is a nonzero vector $\nu_p \in T_{F(p)} \mathcal{N}$ such that  
\begin{align}
g_{F(p)}(\nu_p, dF_p(X)) = 0 \quad \text{for all } X \in T_p \mathcal{M}.
\end{align}

Since $\mathcal{M}$ is of codimension 1, the space of normal vectors is one-dimensional, and any two normal vectors differ by a scalar multiple.    
\end{definition}

\section{Computational details}

\label{app: Computational details}
The code for reproducing our results and accessing our trained models has been provided at \url{https://github.com/ShashankSule/CV_learning_butane}. Below we include details for the numerical computation of the committor function. 

\textbf{Results in Section \ref{sec: effective dynamics analysis}}. The transition rates in Table \eqref{tab: RatesTheta} were computed by numerically solving the boundary value problem \eqref{eq: committor pde in CVs} and then using a quadrature method for computing the integral \eqref{eq: transition rate in CVs}. For $\xi_1$, the dihedral angle, we discretized the elliptic operator $\mathcal{L}$ with a Fourier difference stencil on $10^3$ equispaced points in $[0, 2\pi)$. After computing the committor by imposing the relevant boundary conditions, we computed \eqref{eq: transition rate in CVs} using Simpson's rule. For $\xi_2$, i.e cosine of the dihedral angle, the committor problem amounts to solving the BVP \eqref{eq: committor pde in CVs} in the interval $(\cos(\pi - 0.2), \cos(\pi/3 + 0.1))$ with boundary conditions $q(\cos(\pi - 0.2)) = 0, \, q(\cos(\pi/3 + 0.1)) = 1$. To numerically solve this BVP, we used Chebyshev spectral differentiation, followed by Clenshaw-Curtis quadrature. Finally, for $\xi_3$, we used a diffusion map-based solver for the committor function proposed for PDEs on manifolds in \cite{evans2022computing, sule2023sharp}. We then used Monte Carlo integration to compute the rate.

\textbf{Neural network architectures and training.} 
For each choice of the feature map, we train a diffusion net carrying the data to a surrogate space, then a confining potential vanishing on the surrogate manifold, and finally collective variable(s) which flow orthogonally to the gradients of the confining potential. Below, we enumerate the network architectures, optimizers, and hyperparameters used in training the models. 

\begin{table}[h!]
\begin{center}
% \scriptsize
 \footnotesize
% \small
\renewcommand{\arraystretch}{3} 
\begin{tabular}{p{2cm} p{2cm} p{2.5cm} p{2cm} p{2cm} p{2cm} p{2cm}}
\toprule
\textbf{Feature map} & \textbf{Model} & \textbf{\# neurons} & \textbf{Activation} & \textbf{Optimizer \& lr} & \textbf{Training epochs} & \textbf{Hyper-parameters}\\
\hline
 \multirow{4}{*}{\shortstack{BondAlign \\ (2,3)}} & LAPCAE & $[12,32,32,32,4]$ & Tanh & Adam, 1e-4 & 500 &  {\shortstack{$\alpha^{Enc}_{LAPCAE} = 0.5$, \\ $\alpha_{LAPCAE} = 2.0$}}\\
\cline{2-7}
 & $\widehat{\Phi}$ & $[3, 30, 45, 32, 32, 1]$ & $x + \sin^2(x)$ & Adam, 1e-3 & 1000 & \shortstack[l]{$\alpha_{Zero} = 1.0$ \\ $\alpha_{normals} = 0.0$} \\
\cline{2-7}
 &  $\widehat{\xi}_1$ & $\widehat{\xi}_1 = \partial_1 \widehat{\Phi}$ & $1 + 2\sin(x)\cos(x)$ & No training & N.A. & N.A. \\
\cline{2-7}
 &  $\widehat{\xi}_2$& $[3, 30, 45, 32, 32, 1]$ & $x^2 + \sin(x)$ & Adam, 1e-2 & 500 & $\alpha_{OC} = 1.0$ \\
 \hline
  \multirow{2}{*}{PlaneAlign} & DNet & $[42,32,32,32,4]$ & Tanh & Adam, 1e-4 & 500 &  $\alpha_{DNet} = 1.0$\\
\cline{2-7}
 & $\widehat{\Phi}$ & $[2, 30, 45, 32, 32, 1]$ & $x + \sin^2(x)$ & Adam, 1e-3 & 1000 & \shortstack[l]{$\alpha_{Zero} = 1.0$ \\ $\alpha_{normals} = 0.0$} \\
% \cline{2-7}
 \hline 
\end{tabular}

\caption{Architectures, optimizers, and hyperparameters for the neural networks trained in the paper.}
\label{tab: NN_details}
\end{center}
\end{table}

\textbf{The Free energy and the diffusion tensor with {\sf BondAlign(2,3)} and {\sf PlaneAlign}.}
In section \eqref{sec: learning cvs} and table \eqref{tab: transition rates} we study transition rates for the anti-gauche transition from CVs learned via the feature maps {\sf BondAlign(2,3)} and {\sf PlaneAlign}. In Figures \eqref{fig: BondAlign23} and \eqref{fig:PlaneAlignF&M} we present the free energy landscapes and diffusion tensors for CVs obtained via algorithm \eqref{alg: learning cv} from data post-processed via these two feature maps. 

\begin{figure}
    \centering
    \includegraphics[width=0.5\linewidth]{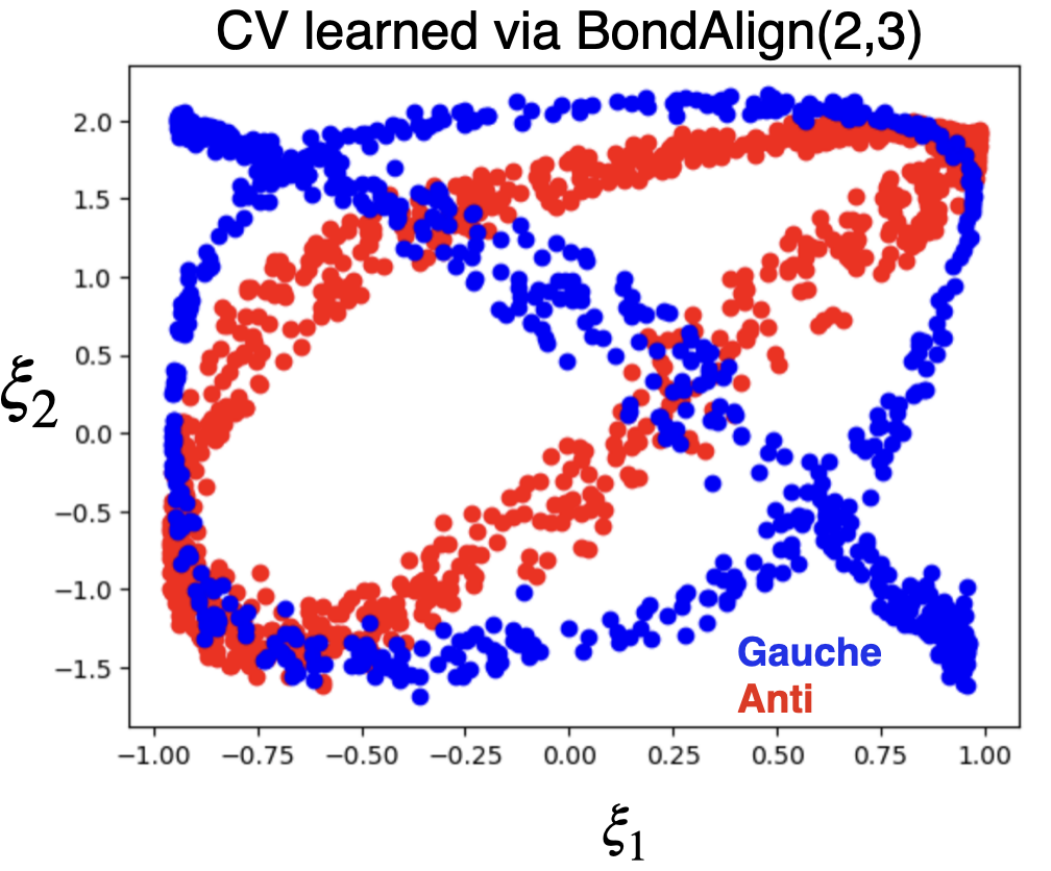}
    \caption{When visualizing data in the CV space learned with the {\sf BondAlign(2,3)} feature map, the anti (red) and gauche (blue) states are not separated.}
    \label{fig: bondalign 23 statesep}
\end{figure}

\begin{figure}[h!]
    \centering
    \includegraphics[width=0.8\textwidth]{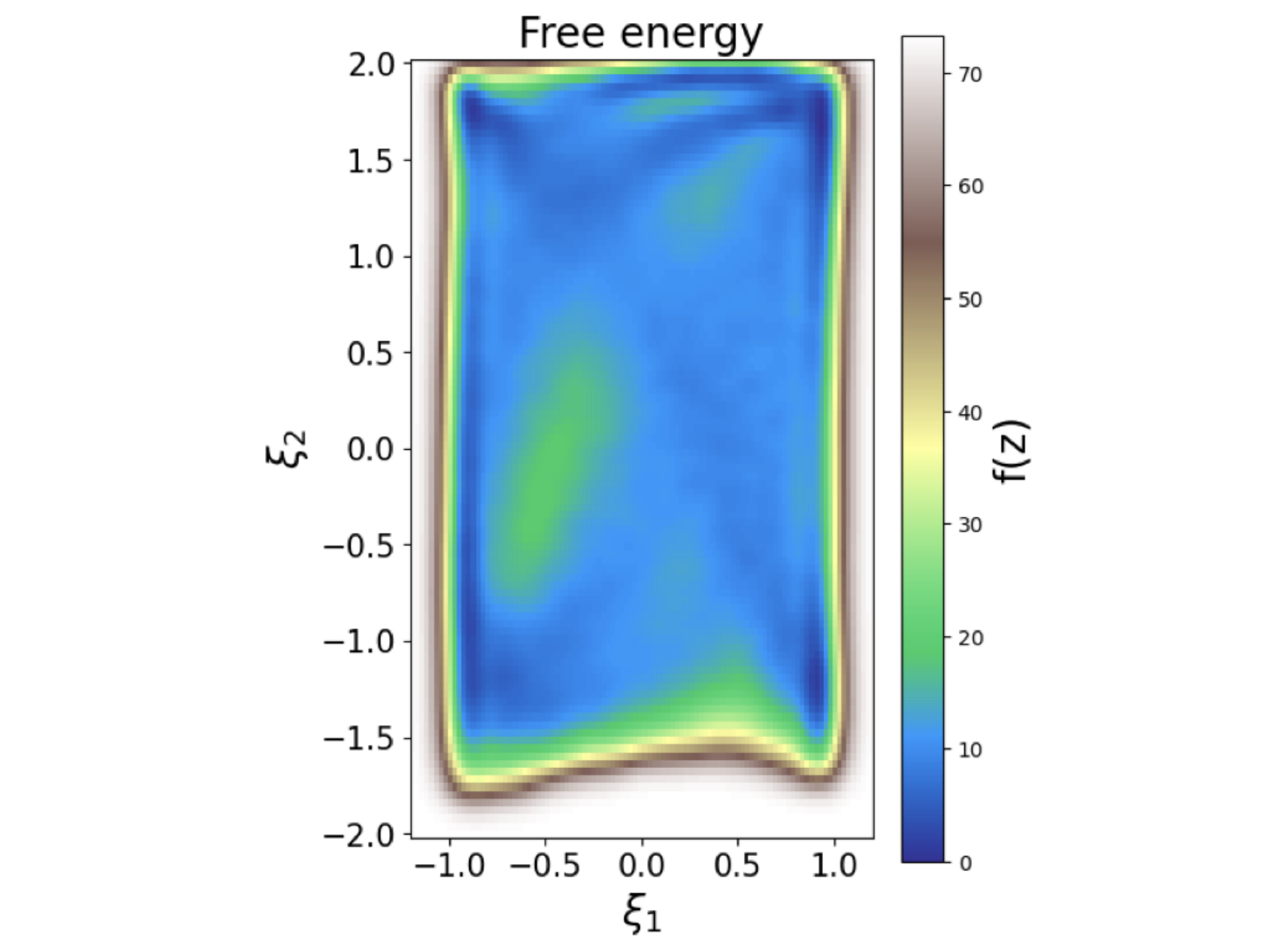}
    \includegraphics[width=0.8\textwidth]{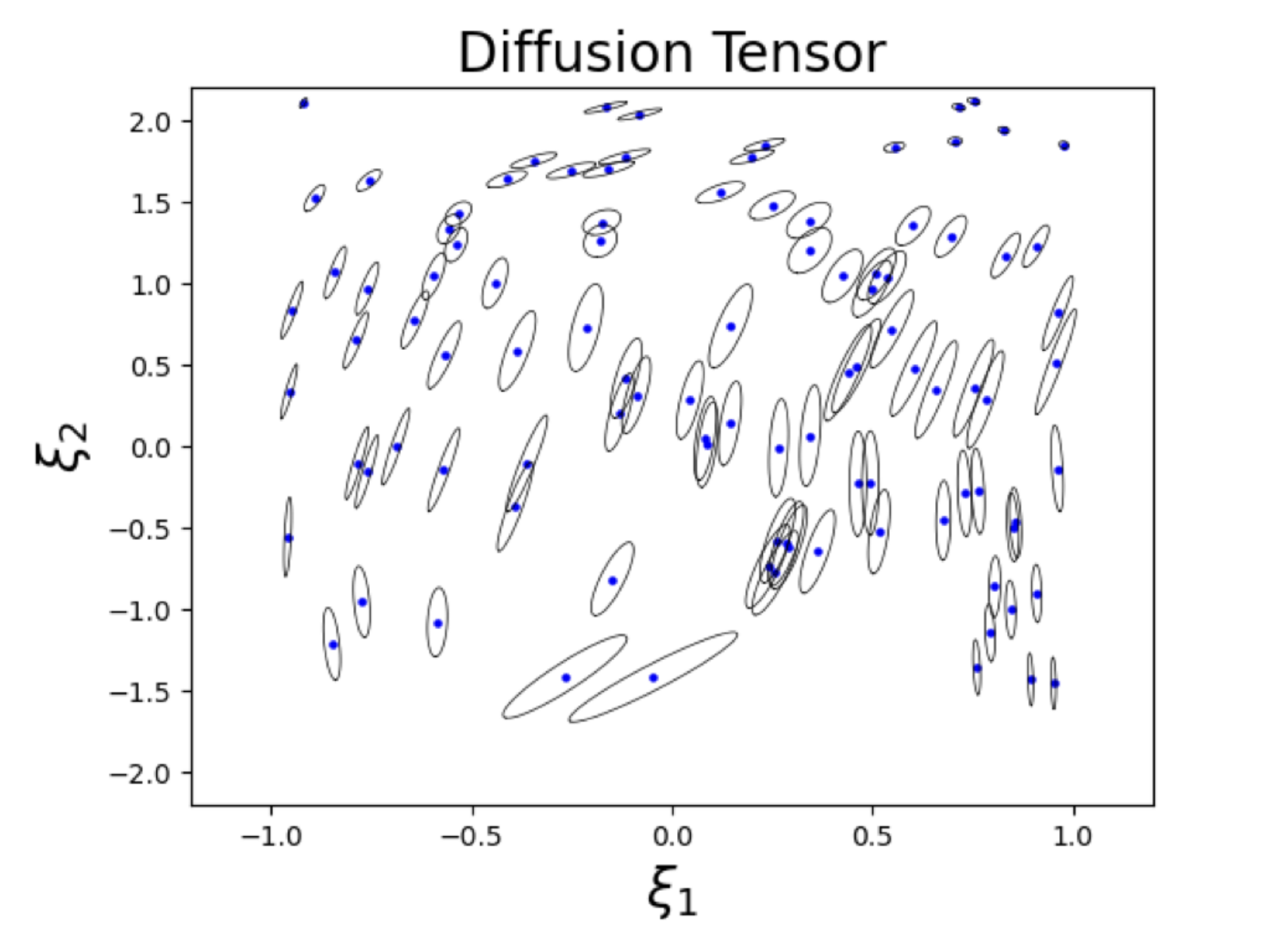}
    \caption{Free energy and diffusion tensor for {\sf BondAlign(2,3)} with LAPCAE as the global coordinate chart parameterizing the resident manifold.}
    \label{fig: BondAlign23}
\end{figure}

\begin{figure}[h!]
    \centering
    \includegraphics[width=0.8\textwidth]{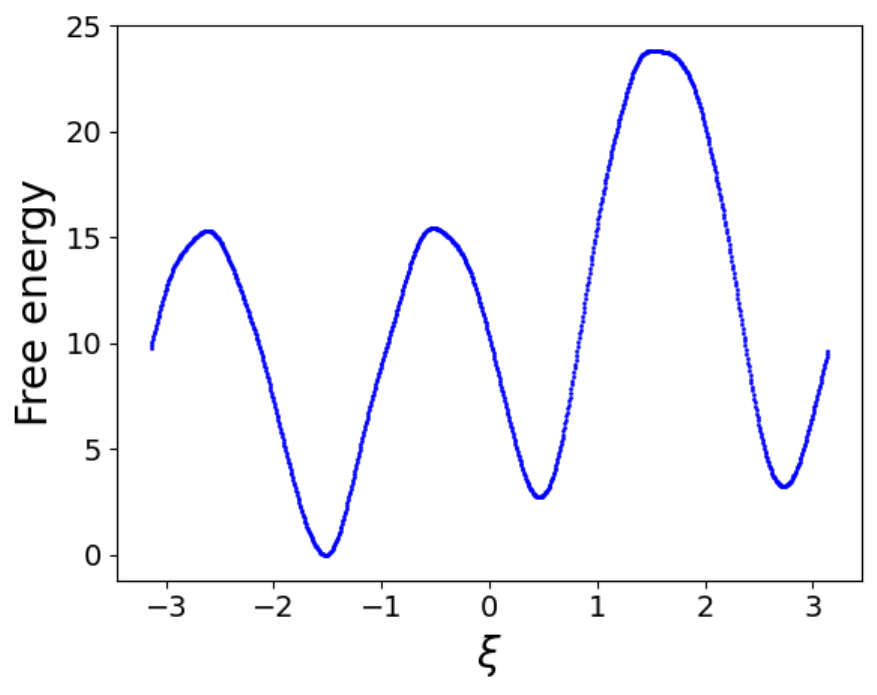} \includegraphics[width=0.8\textwidth]{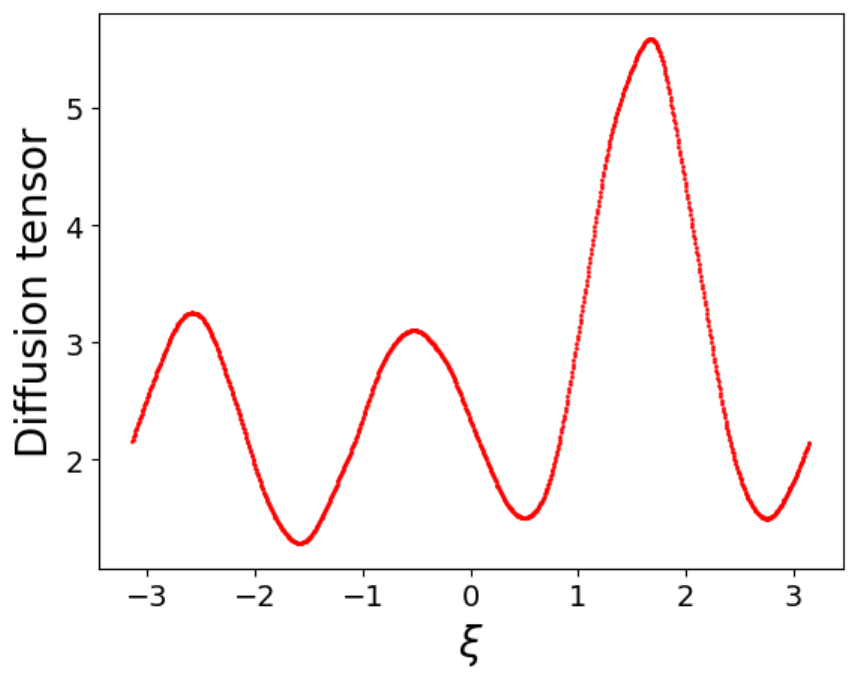}
    \caption{Free energy and diffusion tensor for {\sf PlaneAlign}.}
    \label{fig:PlaneAlignF&M}
\end{figure}

\newpage
\printbibliography
\end{document}